\newtheorem{theorem}{Theorem}[section]
\newtheorem{maintheorem}{Theorem}
\newtheorem{lemma}[theorem]{Lemma}
\newtheorem{proposition}[theorem]{Proposition}
\newtheorem{corollary}[theorem]{Corollary}
\theoremstyle{definition}
\newtheorem{definition}[theorem]{Definition}
\newtheorem{remark}[theorem]{Remark}
\newtheorem{example}[theorem]{Example}
\newcommand{\op}{^{\mathrm{op}}}
\newcommand{\dd}{\mathsf{d}}
\newcommand{\BB}{\mathsf{B}}
\newcommand{\FF}{\mathcal{F}}
\newcommand{\TT}{\mathcal{T}}
\newcommand{\Ss}{\mathcal{S}}
\newcommand{\Hh}{\mathcal{H}}
\newcommand{\HH}{\mathrm{HH}}
\newcommand{\kk}{\Bbbk}
\newcommand{\ot}{\otimes}
\newcommand{\si}{\sigma}
\newcommand{\vphi}{\varphi}
\newcommand{\De}{\Delta}
\newcommand{\pl}{\partial}
\newcommand{\al}{\alpha}
\newcommand{\be}{\beta}
\newcommand{\Hom}{\operatorname{Hom}}
\newcommand{\Ext}{\operatorname{Ext}}
\newcommand{\Tor}{\operatorname{Tor}}
\newcommand{\Ker}{\operatorname{Ker}}
\newcommand{\Ima}{\operatorname{Im}}
\newcommand{\bA}{\bar{A}}
\newcommand{\varphantom}[1]{\mathrel{\phantom{#1}}}
\newcommand{\wt}[1]{\widetilde{#1}}
\newcommand{\sbullet}{\scriptscriptstyle{\bullet}}
\newcommand{\Phom}[1]{\langle\!\langle #1\rangle\!\rangle}
\newcommand{\Hhom}[1]{[\![#1]\!]}
\newcommand{\nan}{\mathbb{N}}
\newcommand{\inn}{\mathbb{Z}}
\newcommand{\SSS}{\mathbb{S}}
\newcommand{\WL}{\mathbb{WL}}
\newcommand{\fc}{\mathsf{fc}}
\numberwithin{equation}{section}
\begin{document}
	
\title[BV structures on Hochschild cohomology of GWAs]{Batalin--Vilkovisky algebra structures on the Hochschild cohomology of generalized Weyl algebras}

\author[Liyu Liu]{Liyu Liu}
\address{School of Mathematical Sciences, Yangzhou University, No.\ 180 Siwangting Road, 225002 Yangzhou, Jiangsu, China}
\email{lyliu@yzu.edu.cn}

\author[Wen Ma]{Wen Ma}
\address{School of Mathematical Sciences, Yangzhou University, No.\ 180 Siwangting Road, 225002 Yangzhou, Jiangsu, China}
\email{2922117517@qq.com}

\begin{abstract}
	This paper is devoted to the calculation of Batalin--Vilkovisky algebra structures on the Hochschild cohomology of skew Calabi--Yau generalized Weyl algebras. We firstly establish a Van den Bergh duality at the level of complex. Then based on the results of Solotar et al., we apply Kowalzig and Kr\"ahmer's method to the Hochschild homology of generalized Weyl algebras, and translate the homological information into cohomological one by virtue of the Van den Bergh duality, obtaining the desired Batalin--Vilkovisky algebra structures. Finally, we apply our results to quantum weighted projective lines and Podle\'s quantum spheres, and the Batalin--Vilkovisky algebra structures for them are described completely.
\end{abstract}
\keywords{Hochschild cohomology, Batalin--Vilkovisky algebra, Van den Bergh duality, generalized Weyl algebra}
\subjclass[2010]{Primary 16E40, 14A22, 55U30}
%16E40 (Co)homology of rings and algebras (e.g. Hochschild, cyclic, dihedral, etc.)
%14A22 Noncommutativealgebraicgeometry
%55U30 Duality

\maketitle

\section{Introduction}
 
Hochschild cohomology theory dates from the forties of the last century. It is becoming indispensable in many branches of algebra, such as homological algebra, representation theory, deformation theory, operad theory, and so on. Furthermore, Hochschild cohomology characterizes some geometrical information. For example, the famous Hochschild--Kostant--Rosenberg theorem interprets the K\"ahler differential forms, multi-derivations of smooth commutative algebras in terms of Hochschild (co)homology.

During the development of Hochschild cohomology theory, Gerstenhaber made remarkable contributions. He discovered a new structure on the Hochschild cohomology $\HH^*(A)$ for any algebra $A$ in 1960's \cite{Gerstenhaber:cohomology}, and the structure is nowadays called the Gerstenhaber algebra structure. Roughly speaking,  a Gerstenhaber algebra is a graded vector space equipped with a cup product and a Lie bracket, which is somewhat analogous to a graded Poisson algebra. Gerstenhaber's another contribution, is to establish deeply relations between cohomology theory and algebraic deformation theory, together with Schack \cite{Gerstenhaber-Schack:deformation}. Deformation theory, as a part of noncommutative geomery, appears in the fields of algebra, algebraic geometry, differential geometry,  mathematical physics, and so on.

In the past decades, people found a special class of Gerstenhaber algebra in some fields such as theoretic physics,  Poisson geometry, and string topology, which are called Batalin--Vilkovisky algebras. A Gerstenhaber algebra is a Batalin--Vilkovisky algebra if the defining Lie bracket is induced by an operator of order two. What followed is the question: Since the Hochschild cohomology of any algebra is a Gerstenhaber algebra, for which algebras their Hochschild cohomology admits a Batalin--Vilkovisky algebra structure? Many mathematician focus on the question and answer positively in some situations. Tradler verified that the Hochschild cohomology for symmetric Frobenius algebras are all Batalin--Vilkovisky algebras \cite{Tradler:bv-inner-product}. Ginzburg proved the Hochschild cohomology for  Calabi--Yau algebras are also Batalin--Vilkovisky algebras \cite{Ginzburg:CY-alg}. They used distinct manners; however, their thoughts are similar. That is, by virtue of a duality between the Hochschild homology and cohomology, the Connes operator induces the desired operator of order two. Inspired by this thought, Kowalzig and Kr\"ahmer showed that if a skew Calabi--Yau algebra $A$ has a semisimple Nakayama automorphism,  then $\HH^*(A)$ is a Batalin--Vilkovisky algebra, which is a generalization of Ginzburg's result \cite{Kowalzig-Krahmer:BV-twisted-CY}. Coincidentally, Lambre, Zhou and Zimmerman proved that for a Frobenius algebra $A$ with semisimple Nakayama automorphism, $\HH^*(A)$ is also a Batalin--Vilkovisky, generalizing Tradler's result \cite{Lambre-Zhou-Zimmermann:bv-frobenius-semisimple}. In particular, when $A$ is Koszul Calabi--Yau, the Koszul dual $A^!$ is a symmetric algebra, so both $\HH^*(A)$ and $\HH^*(A^!)$ are Batalin--Vilkovisky algebras. Chen, Yang and Zhou proved Rouquier's Conjecture: $\HH^*(A)$ is isomorphic to $\HH^*(A^!)$ as a Batalin--Vilkovisky algebra \cite{Chen-Yang-Zhou:bv-koszul}.

It is noteworthy that the Hochschild cochain complexes are so big that one seldom computes cohomology via them in practice. Therefore, it is a tough task to determine the Batalin--Vilkovisky algebra structure on $\HH^*(A)$ for a concrete $A$. In this paper, we investigate the structures on the Hochschild cohomology for a class of generalized Weyl algebras. Such algebras were introduced by Bavula in the mid-1990's \cite{Bavula:GWA-def}. The class of generalized Weyl algebras contains numerous examples arising from quantum groups and differential operator rings (see \cite{Bavula:GWA-tensor-product} for detail). So far, a lot of algebraic properties of generalized Weyl algebras have been revealed, such as irreducible representations, homological dimensions,  isomorphisms and automorphisms (cf.\ \cite{Bavula:gldim-polyn}, \cite{Bavula:GWA-def}, \cite{Bavula-Jordan:GWA-aut}, \cite{L:gwa-def}). In particular, the Hochschild homology and cohomology for generalized Weyl algebras $A$ over a polynomial algebra in one variety have been computed \cite{Farinati:Hochschid-homology-GWA}, \cite{Solotar:Hochschild-homology-GWA-quantum}. A necessary and sufficient condition for $A$ to be skew Calabi--Yau was given by the first author \cite{L:gwa-def}, and the Nakayama automorphism was also obtained. However, when $A$ is skew Calabi--Yau,  the Batalin--Vilkovisky algebra structure on $\HH^*(A)$ is still unknown. The goal of this paper is to determine the structure.
 
From now on, let $A$ be a generalized Weyl algebra of quantum type over a polynomial algebra in one variety, and $A^e$ be the enveloping algebra of $A$. By \cite{L:gwa-def} or \cite{Solotar:Hochschild-homology-GWA-quantum}, $A$ as a left $A^e$-module has a free resolution $\FF$ of the form
\[
\begin {tikzcd}
A^e  & (A^e)^3 \ar[l, "d_1"'] & (A^e)^4 \ar[l, "d_2"'] & (A^e)^4  \ar[l, "d_3"'] & (A^e)^4 \ar[l, "d_4"'] & \cdots. \ar[l, "d_5"']
\end {tikzcd}
\]
In Section \ref{sec:vdb-duality}, we use the resolution $\FF$ to give an explicit Van den Bergh duality in the language of homotopy category. More specifically, if $A$ satisfies the skew Calabi--Yau condition that was given in \cite{L:gwa-def}, then for any $A$-bimodule $M$, denoting $\Ss(M)=\Hom_{A^e}(\FF, M)$ and $\TT(M)=M^\nu\ot_{A^e}\FF$, there are cochain maps $f$ and $g$ as follows (here $\TT(M)$ is regarded as a cochain complex),
\[
\begin {tikzcd}
\Ss(M) \ar[r, "g", shift left=1] & \TT(M)[-2]. \ar[l, "f", shift left=1] 
\end {tikzcd}
\]
We prove $f$ and $g$ are quasi-isomorphisms by constructing homotopy maps $s\colon fg \Rightarrow 1_{\Ss(M)}$ and $t\colon gf \Rightarrow 1_{\TT(M)[-2]}$.
  
\begin{maintheorem}[Theorem \ref{thm:quasi-inverse}]
	$f$ and $g$ are quasi-inverse mutually, so they are both quasi-isomorphisms.
\end{maintheorem}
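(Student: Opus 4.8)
The plan is to prove the theorem along the lines announced in the introduction: to construct explicit homotopy operators $s$ and $t$ of degree $-1$ on $\Ss(M)$ and on $\TT(M)[-2]$ respectively, with $\dd s+s\dd=1_{\Ss(M)}-fg$ and $\dd t+t\dd=1_{\TT(M)[-2]}-gf$, where $\dd$ denotes the respective differentials. Once such $s$ and $t$ are exhibited, $fg$ and $gf$ are homotopic to the identities, so $f$ and $g$ are mutually inverse homotopy equivalences, and in particular each induces an isomorphism on cohomology and is a quasi-isomorphism. The first task would be bookkeeping: I would fix the explicit form of $\FF$ from \cite{L:gwa-def,Solotar:Hochschild-homology-GWA-quantum}, namely the free $A^e$-modules ($\FF_0=A^e$, $\FF_1=(A^e)^3$, $\FF_n=(A^e)^4$ for $n\ge 2$) together with the matrices $d_i$ over $A^e$. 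Applying $\Hom_{A^e}(-,M)$ turns $\Ss(M)$ into an explicit cochain complex $M\to M^3\to M^4\to M^4\to\cdots$ with coboundaries induced by the $d_i$, while $M^\nu\ot_{A^e}-$ turns $\TT(M)$ into an explicit complex with finite sums of copies of $M^\nu$ in each degree and differentials $1\ot d_i$, landing in cohomological degrees $\le 2$ after the shift. By Section~\ref{sec:vdb-duality} the maps $f$ and $g$ are given in each degree by prescribed matrices with entries in $A$ — essentially the Van den Bergh pairing written out in the $A^e$-basis of $\FF$ — and it is the twist by the Nakayama automorphism $\nu$, determined by the skew Calabi--Yau condition of \cite{L:gwa-def}, that makes them chain maps.

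The heart of the matter is the construction of $s$ and $t$. They will again be presented by explicit matrices acting on the copies of $M$ (resp.\ $M^\nu$) through the bimodule structure, with entries built from $x$, $y$, $h$, the defining polynomials of $A$, and powers of $\sigma$. That such homotopies must exist is not a surprise: since the skew Calabi--Yau generalized Weyl algebras considered here are homologically smooth of dimension $2$, the resolution $\FF$ is chain homotopy equivalent to a length-$2$ resolution of $A$, so the cohomology of $\Ss(M)$ and of $\TT(M)[-2]$ is concentrated in degrees $0,1,2$ and the chain maps $f,g$ are automatically homotopy equivalences there; but for the Batalin--Vilkovisky computation carried out later we need $s$ and $t$ in closed form, so the real work is to write them down and check them. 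The shape of $s$ and $t$ is in fact largely forced: in high degrees they must contract the (exact, eventually periodic) tails of $\Ss(M)$ and $\TT(M)[-2]$, and in degrees $0,1,2$ they must correct the discrepancy between $fg$, $gf$ and the identities, which leaves only a handful of low-degree matrix entries to be fitted by hand.

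It then remains to verify the homotopy identities degree by degree. Because $\FF_n=(A^e)^4$ with eventually periodic differentials for $n$ large, only finitely many shapes of identity occur: those in low degrees $0,1,2$ — where the skew Calabi--Yau hypothesis enters, through the degree-$2$ boundary terms and the precise form of $\nu$ — and the few remaining ones coming from the periodic tail. Each is an identity of matrices over $A^e$ that unwinds into finitely many equations in $A$, verifiable by routine manipulation with the defining relations of the generalized Weyl algebra (commutation of $x$ and $y$ with $\kk[h]$ via $\sigma^{\pm 1}$, and $xy$, $yx$ being the prescribed elements of $\kk[h]$) together with the fact that $\sigma$ is an algebra automorphism of $\kk[h]$ of quantum type. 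I expect this verification to be the main obstacle — not because any individual step is deep, but because the many matrix entries and the interleaved $\sigma$-twists make the computation long and error-prone, and because the low-degree entries of $s$ and $t$ have to be chosen simultaneously compatibly with $f$, $g$, and the differentials. With $s$ and $t$ in place, $1_{\Ss(M)}-fg$ and $1_{\TT(M)[-2]}-gf$ are null-homotopic, so $f$ and $g$ are quasi-inverse to one another and, in particular, both quasi-isomorphisms.
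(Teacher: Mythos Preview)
Your proposal is correct and follows essentially the same route as the paper: explicit homotopies $s\colon fg\Rightarrow 1$ and $t\colon gf\Rightarrow 1$ are written down as matrices with entries in $A^e$, and the identities $1-fg=ds+sd$, $1-gf=\pl t+t\pl$ are checked degree by degree using the defining relations of $A$ and the eventual $2$-periodicity of $\FF$. The only organizational difference is that the paper first carries this out for $M=A^e$ (so that $f$, $g$, $s$, $t$ are right $A^e$-module maps and the verification reduces to checking finitely many identities in $A^e$), and then obtains the statement for arbitrary $M$ by applying $-\ot_{A^e}M$, which preserves homotopies; your formulation with matrices acting on $M$ through the bimodule structure amounts to the same thing.
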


Accordingly, we succeed in obtaining the Van den Bergh duality $g\colon \Ss(M) \to \TT(M)[-2]$ at the level of complex. After that, we are going to determine the Batalin--Vilkovisky algebra structure. In terms of generators and relations, $A$ is represented as
\[
\kk\langle x, y, z \,|\, xz=qzx, yz=q^{-1}zy, yx=p(z), xy=p(qz)\rangle,
\]
where $q$ is generic, $p(z)$ is a polynomial in $z$ without multiple roots. Solotar, Su{\'a}rez-{\'A}lvarez and Vivas have calculated the cohomology $H^*(\Ss)$ by the spectral sequence argument \cite{Solotar:Hochschild-homology-GWA-quantum}, where $\Ss=\Ss(A)$. Based on their results, we find cocycle representatives of the bases for the relevant cohomological groups in Section \ref{sec:bv-structure}, and obtain the following theorem in which the notations $\nsim$, $\wt{p}$, $n$, $\ell$ are introduced in the beginning of Section \ref{sec:bv-structure}.

\begin{maintheorem}[Theorem \ref{thm:Per-cohomo}]\label{mainthm:2}
	When $p\nsim z$, the bases for $H^0(\Ss)$, $H^1(\Ss)$, $H^2(\Ss)$ are represented by the following sets of cocycles respectively,
	\begin{description}
		\item[$H^0(\Ss)$] $\{1\}$,
		\item[$H^1(\Ss)$] $\{(x,-y,0)^T\}$,
		\item[$H^2(\Ss)$] $\{(0, \wt{zp'}, -xz, zy)^T, (z^i, \wt{z^i},0,0)^T\,|\, 0\leq i<n, i\neq n-\ell\}$.
	\end{description}
	When $p\sim z$, the bases for $H^0(\Ss)$, $H^1(\Ss)$, $H^2(\Ss)$ are represented by the following sets of cocycles respectively,
	\begin{description}
		\item[$H^0(\Ss)$]  $\{1\}$,
		\item[$H^1(\Ss)$]  $\{(x, -y, 0)^T, (x, y, 2z)^T\}$,
		\item[$H^2(\Ss)$]  $\{(0, \wt{p}, -xz, zy)^T, (1, 1, 0, 0)^T\}$.
	\end{description}
	Besides, the groups $H^j(\Ss)$ for all $j\geq 3$ are trivial.
\end{maintheorem}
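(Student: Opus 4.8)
The plan is to work entirely inside the explicit complex $\Ss=\Hom_{A^e}(\FF,A)$. Since each term of $\FF$ is free, $\Hom_{A^e}((A^e)^k,A)\cong A^k$, so $\Ss$ is the complex $A\xrightarrow{d_1^*}A^3\xrightarrow{d_2^*}A^4\xrightarrow{d_3^*}\cdots$, the maps $d_i^*$ being the explicit matrices obtained from the differentials of $\FF$ recorded in \cite{L:gwa-def,Solotar:Hochschild-homology-GWA-quantum}; by construction $H^i(\Ss)=\HH^i(A)$. From the spectral sequence computation of Solotar, Su{\'a}rez-{\'A}lvarez and Vivas \cite{Solotar:Hochschild-homology-GWA-quantum} one already knows $\dim_\kk H^i(\Ss)$ in each of the two cases $p\sim z$, $p\not\sim z$, as well as $H^j(\Ss)=0$ for $j\ge3$; the latter I would simply quote. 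Hence the whole task reduces to exhibiting, for $i=0,1,2$, exactly $\dim_\kk H^i(\Ss)$ explicit cocycles in $\Ss^i$ and checking they are linearly independent modulo $\Ima d_i^*$.

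Under the identification above, $\FF_1=(A^e)^3$ corresponds to the generators $x,y,z$ and $\FF_2=(A^e)^4$ to the four relations, so a $1$-cochain $(a_1,a_2,a_3)\in A^3$ is the datum of a derivation $\delta$ with $\delta(x)=a_1$, $\delta(y)=a_2$, $\delta(z)=a_3$; the cocycle condition says $\delta$ respects the relations, $\Ima d_1^*$ is the inner derivations, and $H^1(\Ss)=\operatorname{Der}(A)/\operatorname{Inn}(A)$. Degree $0$ is then immediate: $H^0(\Ss)=Z(A)$, and a short computation with the PBW basis of $A$, using that $q$ is not a root of unity, gives $Z(A)=\kk\cdot1$. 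For $H^1$, the listed cocycles are the Euler derivations: $(x,-y,0)^T$ for the $\inn$-grading $|x|=1,|y|=-1,|z|=0$ (always a derivation), and, when $p\sim z$, also $(x,y,2z)^T$ for the grading $|x|=|y|=1,|z|=2$ (a derivation precisely because $\deg p=1$). That these are not inner, and independent, follows from expanding a hypothetical $\operatorname{ad}_a$ into $\inn$-homogeneous pieces and using $q^k\ne1$ for $k\ge1$; comparing with the known $\dim H^1$ ($=1$ or $2$) finishes degree $1$.

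For $H^2$ the $2$-cochains $(b_1,\dots,b_4)\in A^4$ are ``infinitesimal deformations'' of the four relations, and $H^2(\Ss)$ is their space modulo trivial ones. The technical core is the polynomials $\wt{z^i},\wt p,\wt{zp'}\in\kk[z]$: each is built degree by degree as the unique solution of the first-order $q$-difference equation over $\kk[z]$ that the cocycle condition $d_3^*(\,\cdot\,)=0$ forces on it; solvability in every degree except the single degree $n-\ell$ is exactly what genericity of $q$ delivers, and $p$ being square-free, i.e.\ $\gcd(p,p')=1$, is what makes the component carrying $(0,\wt{zp'},-xz,zy)^T$ work. With these in hand, checking that each listed tuple is a cocycle is a direct matrix manipulation using only $xz=qzx$, $yz=q^{-1}zy$, $yx=p(z)$, $xy=p(qz)$. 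For non-exactness I would compare $\kk[z]$-leading terms against $\Ima d_2^*$: for the family $\{(z^i,\wt{z^i},0,0)^T\}$ the leading degrees $i$ are all distinct, and the value $i=n-\ell$ is dropped precisely because there the tuple lies in $\Ima d_2^*$. Together with the extra class coming from the further degeneration when $p\sim z$ (which produces $(1,1,0,0)^T$), the count again matches Solotar et al.'s dimensions, so the sets are bases; finally $H^j(\Ss)=0$ for $j\ge3$ is quoted.

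The main obstacle is twofold. First, pinning down $\wt{z^i},\wt p,\wt{zp'}$: one has to identify the right $q$-difference operator on $\kk[z]$, prove it is invertible off the one bad degree $n-\ell$, and track how $p(0)$---i.e.\ whether $z\mid p$---shifts that exceptional degree and, when $\deg p=1$, forces the separate treatment of the case $p\sim z$, where the initial part of $\FF$ degenerates further and the extra $H^1$- and $H^2$-generators appear. Second, the non-exactness arguments in degree $2$: because the first terms $(A^e)^3,(A^e)^4$ of $\FF$ are irregular in comparison with the periodic-looking tail responsible for the vanishing in degrees $\ge3$, one must describe $\Ima d_2^*$ by hand in each graded component rather than reading it off from periodicity. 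Once these are settled, what remains is bookkeeping.
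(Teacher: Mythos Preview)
Your overall strategy---quote the dimensions from \cite{Solotar:Hochschild-homology-GWA-quantum}, exhibit the right number of cocycles, and verify linear independence modulo coboundaries---is sound and is exactly what the paper does. The treatment of $H^0$ and $H^1$ is fine. But your plan for $H^2$ rests on a misreading of the notation, and this leads you to mislocate where the actual work lies.

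The symbol $\wt{\vphi}$ is not an unknown to be solved for: by definition $\wt{\vphi}=\si(\vphi)=\vphi(qz)$, so $\wt{z^i}=q^iz^i$, $\wt{p}=p(qz)$, $\wt{zp'}=qz\,p'(qz)$. There is no $q$-difference equation to invert, no ``bad degree'' where solvability fails, and no role for $\gcd(p,p')=1$ in making $(0,\wt{zp'},-xz,zy)^T$ a cocycle. Checking that each listed vector is a cocycle is then a short matrix computation with the relations of $A$, and is not the technical core.

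The real content in degree~$2$ is the independence step, and here your description is also off. The class $\mathfrak{u}^{n-\ell}$ represented by $(z^{n-\ell},\wt{z^{n-\ell}},0,0)^T$ is \emph{not} a coboundary; rather, among the $n+1$ classes $\mathfrak{u}^0,\dots,\mathfrak{u}^n$ there are exactly two linear relations in $H^2(\Ss)$, coming from the explicit coboundaries
\[
d^1\bigl((0,z^jy,0)^T\bigr)=(z^jp,\wt{z^jp},0,0)^T,\qquad
d^1\bigl((0,0,-z)^T\bigr)=(zp',\wt{zp'},0,0)^T.
\]
The first family (for $j=0$) together with the second yields the homogeneous system
\[
\begin{bmatrix} a_0 & a_1 & \cdots & a_{n-1} & a_n \\ na_0 & (n-1)a_1 & \cdots & a_{n-1} & 0 \end{bmatrix}
(\mathfrak{u}^n,\dots,\mathfrak{u}^0)^T=0,
\]
whose row-reduced form has pivots in columns $0$ and $\ell$; this is what singles out $\mathfrak{u}^n$ and $\mathfrak{u}^{n-\ell}$ as redundant and explains why the case $p\sim z$ (where $n=\ell=1$) behaves differently. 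A ``leading term'' comparison will not detect this, since each $\mathfrak{u}^i$ individually is nonzero in $H^2$. The paper also uses \cite[Prop.~6.2]{Solotar:Hochschild-homology-GWA-quantum} to reduce an arbitrary $2$-cocycle to the shape $(U_1,U_2,xU_3,U_4y)^T$ before this analysis; you will need an equivalent normal-form argument, not just a cocycle count, to show your list \emph{spans} and not merely that it is independent.
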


We compute the Batalin--Vilkovisky algebra structure on $\HH^*(A)$ in Section \ref{sec:computation-bv}. The section is divided into two subsections, dealing with the cases $p\nsim z$ and $p\sim z$ respectively. In both cases, we apply the same method. That is, we translate the cocycles in Theorem \ref{mainthm:2} into cycles of $\TT(A)$ by the Van den Bergh duality, and the latter are changed into Hochschild cycles by an comparison constructed in Section \ref{sec:computation-bv}. So we employ the manner introduced by Kowalzig and Kr\"ahmer \cite{Kowalzig-Krahmer:BV-twisted-CY}, and hence obtain the required structure.

\begin{maintheorem}[Theorem \ref{thm:main-result-1}]
	If $p\nsim z$, then $\HH^*(A)$ as a Batalin--Vilkovisky algebra has $\{1, \mathfrak{s}, \mathfrak{v}, \mathfrak{u}^i \,|\, 0\leq i< n, i\neq n-\ell \}$ as a basis, where $|1|=0$, $|\mathfrak{s}|=1$, $|\mathfrak{v}|=|\mathfrak{u}^i|=2$, and in addition,
	\begin{enumerate}
		\item $1$ is the identity of $\HH^*(A)$ with respect to the cup product, and the cup products of other pairs of basis elements are trivial,
		\item $\De(\mathfrak{v})=\mathfrak{s}$, and $\De$ acts on other basis elements trivially.
	\end{enumerate}
\end{maintheorem}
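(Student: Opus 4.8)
The starting point is Theorem~\ref{mainthm:2}, which already pins down $\HH^*(A)=H^*(\Ss)$ as a graded vector space with the claimed basis $\{1,\mathfrak s,\mathfrak v,\mathfrak u^i\}$ and with $\HH^j(A)=0$ for $j\ge 3$; only the cup product and the operator $\De$ remain to be identified. The plan is to settle the cup product by degree considerations together with one short computation, and then to compute $\De$ by carrying the cocycles of Theorem~\ref{mainthm:2} through the chain-level Van den Bergh duality of Theorem~\ref{thm:quasi-inverse} and invoking Kowalzig and Kr\"ahmer's description of the BV operator \cite{Kowalzig-Krahmer:BV-twisted-CY}.

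For~(1): the class $1\in\HH^0(A)$ is the unit of the cup product. A product of two basis elements of positive degree lies in $\HH^{\ge 2}(A)$, and if either factor has degree $2$ it lies in $\HH^{\ge 3}(A)=0$; thus the only product left to examine is $\mathfrak s\cup\mathfrak s\in\HH^2(A)$. Since $\mathfrak s$ has odd degree, graded-commutativity of the cup product forces $2\,\mathfrak s\cup\mathfrak s=0$, and a short direct computation (representing $\mathfrak s$ on the resolution $\FF$) shows $\mathfrak s\cup\mathfrak s=0$ in every characteristic. Hence all cup products of pairs of positive-degree basis elements vanish.

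For~(2): recall that $A$ is skew Calabi--Yau with semisimple Nakayama automorphism $\nu$ (by \cite{L:gwa-def}), so by \cite{Kowalzig-Krahmer:BV-twisted-CY} the Van den Bergh duality of Theorem~\ref{thm:quasi-inverse} identifies $\HH^k(A)$ with the Hochschild homology $H_{2-k}(\TT(A))$ of $A$ with coefficients in $A^\nu$, and intertwines the BV operator $\De$ (of degree $-1$) with a Connes-type differential $B$ on $H_*(\TT(A))$. Concretely, given a cocycle $\xi$ of $\Ss$ one should: apply $g$ to obtain a cycle $g(\xi)\in\TT(A)=A^\nu\ot_{A^e}\FF$; push $g(\xi)$ along a comparison chain map from $\FF$ to the bar resolution --- to be constructed in Section~\ref{sec:computation-bv} --- to obtain a genuine Hochschild cycle; apply $B$; push the result back into $\TT(A)$ along a comparison map in the other direction; apply $f$; and read off the resulting cocycle of $\Ss$ in the basis of Theorem~\ref{mainthm:2}. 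Running this through the basis: $\De(1)=0$ since $\HH^{-1}(A)=0$; $\De(\mathfrak v)=\mathfrak s$ by the explicit computation starting from $(0,\wt{zp'},-xz,zy)^T$; $\De(\mathfrak u^i)=0$ by the explicit computation starting from $(z^i,\wt{z^i},0,0)^T$ (a weight argument with respect to the $\inn$-grading of $A$ should shorten several of these cases); and $\De(\mathfrak s)=0$, which then follows formally from $\De^2=0$ and $\De(\mathfrak v)=\mathfrak s$, i.e.\ $\De(\mathfrak s)=\De^2(\mathfrak v)=0$. Since $\De$ annihilates every remaining basis element, this gives~(2); the induced Gerstenhaber bracket is then determined by $\cup$ and $\De$ in the usual way.

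The main obstacle is the explicit chain-level computation of $\De$. The Connes differential itself is transparent on the normalized complex, but applying it here forces one to produce the comparison maps between $\FF$ and the bar resolution in low degrees, to keep careful track of the Nakayama twist in $\TT(A)=A^\nu\ot_{A^e}\FF$, and to use the duality maps $f$, $g$ and the homotopies $s$, $t$ of Section~\ref{sec:vdb-duality} repeatedly. In particular, showing that $\De(\mathfrak v)$ is exactly $\mathfrak s$ --- not merely a nonzero multiple of it, and not zero --- requires tracking the precise coefficients $\wt{zp'}$, $-xz$, $zy$ of the chosen cocycle through every stage and simplifying the outcome modulo Hochschild boundaries; this is where most of the work in Section~\ref{sec:computation-bv} will lie.
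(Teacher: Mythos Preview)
Your proposal is correct and follows essentially the same route as the paper. Two small tactical differences: the paper never pushes back from the bar resolution to $\FF$ nor applies $f$ at the end---instead it first records the bases of $H_*(A,A^\nu)$ as explicit bar cycles (Proposition~\ref{prop:hoch-homology1}), so after applying $\BB$ one can read off the answer directly at the level of homology; and the vanishing $\De(\mathfrak u^i)=0$ is obtained not by a weight argument but by the short boundary computation of Lemma~\ref{lem:lemma1}, which shows $\Hhom{1[z^{i+1}]}_1=0$ for $i\ge 1$ by exhibiting $(0,0,z^i)$ as a $1$-boundary in $\TT$.
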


\begin{maintheorem}[Theorem \ref{thm:main-result-2}]
	If $p\sim z$, then $\HH^*(A)$ as a Batalin--Vilkovisky algebra has $\{1, \mathfrak{s}, \mathfrak{t}, \mathfrak{u}, \mathfrak{v} \}$ as a basis, where $|1|=0$, $|\mathfrak{s}|=|\mathfrak{t}|=1$, $|\mathfrak{u}|=|\mathfrak{v}|=2$, and in addition,
	\begin{enumerate}
		\item $1$ is the identity of $\HH^*(A)$ with respect to the cup product, and the cup product is zero except $\mathfrak{s}\smallsmile\mathfrak{t}=-\mathfrak{t}\smallsmile\mathfrak{s}=-2\mathfrak{v}$,
		\item $\De(\mathfrak{v})=\mathfrak{s}$, $\De(\mathfrak{t})=-2$, and $\De$ acts on other basis elements trivially.
	\end{enumerate}
\end{maintheorem}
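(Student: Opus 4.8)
The plan is to carry out, for $p\sim z$, the same two computations as in the companion case — the cup product and the operator $\De$ — working through the chain-level Van den Bergh duality of Theorem \ref{thm:quasi-inverse}. Fix once and for all the cocycle representatives from Theorem \ref{mainthm:2}: let $\mathfrak s,\mathfrak t\in\HH^1(A)$ be the classes of $(x,-y,0)^T$ and $(x,y,2z)^T$, let $\mathfrak u,\mathfrak v\in\HH^2(A)$ be the classes of $(1,1,0,0)^T$ and $(0,\wt p,-xz,zy)^T$, and let $1\in\HH^0(A)$ be the unit; since $\HH^j(A)=0$ for $j\ge 3$ these five classes form a $\kk$-basis of $\HH^*(A)$.

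For the cup product, $1$ is the unit, the products $\mathfrak s\smallsmile\mathfrak u$, $\mathfrak s\smallsmile\mathfrak v$, $\mathfrak t\smallsmile\mathfrak u$, $\mathfrak t\smallsmile\mathfrak v$ lie in $\HH^3(A)=0$ and $\mathfrak u\smallsmile\mathfrak u$, $\mathfrak u\smallsmile\mathfrak v$, $\mathfrak v\smallsmile\mathfrak v$ lie in $\HH^4(A)=0$; graded commutativity (with $2$ invertible in $\kk$) gives $\mathfrak s\smallsmile\mathfrak s=\mathfrak t\smallsmile\mathfrak t=0$ and $\mathfrak t\smallsmile\mathfrak s=-\mathfrak s\smallsmile\mathfrak t$. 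So the only product requiring work is $\mathfrak s\smallsmile\mathfrak t\in\HH^2(A)$, which I would evaluate by building a diagonal approximation $\FF\to\FF\ot_A\FF$ in degrees $\le 2$ — only its $\FF_1\ot_A\FF_1$ component is needed, since $\mathfrak s,\mathfrak t$ are represented in $\Hom_{A^e}(\FF_1,A)\cong A^3$ — or, equivalently, by passing through the comparison between $\FF$ and the bar resolution $\BB$ of Section \ref{sec:computation-bv} and using the standard bar formula for $\smallsmile$; reading off the resulting element of $\Hom_{A^e}(\FF_2,A)\cong A^4$ modulo coboundaries should produce $\mathfrak s\smallsmile\mathfrak t=-2\,\mathfrak v$, hence $\mathfrak t\smallsmile\mathfrak s=2\,\mathfrak v$.

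For the operator $\De$: by \cite{L:gwa-def} the Nakayama automorphism $\nu$ of $A$ is the diagonal automorphism scaling $x,y,z$ by powers of the generic parameter $q$, hence semisimple, so Kowalzig and Kr\"ahmer's theorem \cite{Kowalzig-Krahmer:BV-twisted-CY} applies: $\HH^*(A)$ is a Batalin--Vilkovisky algebra whose $\De$ corresponds, under the duality $\HH^n(A)\cong H_{2-n}(\TT(A))$ coming from Theorem \ref{thm:quasi-inverse}, to the $\nu$-twisted Connes operator $B$ on the twisted Hochschild homology $H_*(\TT(A))=\HH_*(A,A^\nu)$. For each basis class I would (i) apply $g$ of Theorem \ref{thm:quasi-inverse} to get a cycle in $\TT(A)=A^\nu\ot_{A^e}\FF$ of degree $2-n$; (ii) transport it along the comparison of Section \ref{sec:computation-bv} into the twisted bar complex $A^\nu\ot_{A^e}\BB$, obtaining a genuine twisted Hochschild cycle; (iii) apply $B$; (iv) transport back to $\TT(A)$ and then, via the chain-level quasi-inverse $f$ of Theorem \ref{thm:quasi-inverse}, to a cocycle of $\Ss(A)$; (v) identify its class in the basis above. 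By degree $\De(1)=0$, while $\De(\mathfrak s),\De(\mathfrak t)\in\HH^0(A)=\kk\cdot 1$ and $\De(\mathfrak u),\De(\mathfrak v)\in\HH^1(A)$; the computation should yield $\De(\mathfrak t)=-2$ and $\De(\mathfrak v)=\mathfrak s$, with $\De$ vanishing on $1,\mathfrak s,\mathfrak u$ (note $\De^2=0$ is then automatic, as $\De^2\mathfrak v=\De\mathfrak s=0$). Since $\HH^*(A)$ is already known to be Batalin--Vilkovisky, its Gerstenhaber bracket is determined by $\smallsmile$ and $\De$, so no additional data need be recorded.

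The main obstacle is the bookkeeping: one must follow a cycle faithfully through the composite of the chain-level duality maps $f$ and $g$, the comparison maps relating $\FF$ and $\BB$, and the twist by $\nu$, and in particular evaluate $B$ on the twisted Hochschild cycle representing $\mathfrak v$, whose formula involves the polynomial $p$ — all while keeping the signs and normalizations coherent so that the coefficient $-2$ appears correctly both in $\mathfrak s\smallsmile\mathfrak t=-2\,\mathfrak v$ and in $\De(\mathfrak t)=-2$. Everything else is forced by degree considerations and graded commutativity.
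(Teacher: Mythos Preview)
Your proposal is correct and follows essentially the same route as the paper: reduce the cup product to the single case $\mathfrak s\smallsmile\mathfrak t$ by degree and graded commutativity, compute it via the comparison $\eta_*$ with the bar resolution (the paper does this by identifying $(x,-y,0)^T$ and $(x,y,2z)^T$ with explicit derivations $\delta_1,\delta_2$ and pairing them on the images of $\eta_2$), and compute $\De$ by pushing the cocycles through $g$ and $\eta_*$ to twisted Hochschild cycles, applying $\BB$, and reading off the result in the known basis of $H_*(A,A^\nu)$. One simplification worth noting: since $p\sim z$ one may take $\al=0$, $\be=1$, so the cycles in $C_\bullet^{(1)}(A,A^\nu)$ representing $\mathfrak v$ and $\mathfrak t$ are just $z[\,]$ and $x[y]-q^{-1}y[x]$, making the application of $\BB$ a one-line affair rather than the bookkeeping challenge you anticipate.
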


In Section \ref{sec:applications}, we apply the above results to two classes of concrete generalized Weyl algebras. They are quantum weighted projective lines and Podle\'s quantum spheres. Both of them are closely related to the quantum group $SU_q(2)$, and can be realized as subalgebras of $SU_q(2)$. We completely determine the Batalin--Vilkovisky algebra structures on the Hochschild cohomology of them. When we deal with the Podle\'s quantum spheres, there is an interesting phenomenon---the basis for the second Hochschild cohomological group depends on the polynomial $p$ but the dimension is independent. Specifically, for the equatorial Podle\'s quantum sphere, the basis of the second cohomological group is $\{\mathfrak{v}, \mathfrak{u}^1\}$, while the basis is $\{\mathfrak{v}, \mathfrak{u}^0\}$ for others.

\section{Preliminaries}\label{sec:preliminaries}

Throughout this paper, $\kk$ is a field of characteristic $\neq 2$. All vector spaces and algebras are over $\kk$ unless stated otherwise. Unadorned $\Hom$ and $\ot$ mean $\Hom_{\kk}$ and $\ot_{\kk}$, respectively. 

Let us begin with a brief review of Hochschild (co)homology. Let $A$ be an algebra, $A\op$ the opposite of $A$, and $A^e=A\ot A\op$ the enveloping algebra. By regarding $\kk$ as a subspace of $A$ via the unity map $\kk\to A$, we obtain the quotient space $\bA=A/\kk$. The well-known bar complex $(C^\mathrm{bar}_{\sbullet}(A), b')$ is a free resolution of $A$ as a left $A^e$-module defined as follows:
\[
C^\mathrm{bar}_n(A)=A\ot \bA^{\ot n}\ot A, \quad \forall \, n \in \nan,
\]
in which a typical elements $a_0\ot\overline{a_1}\ot\dots\ot\overline{a_n}\ot a_{n+1}$ is written as $a_0[a_1,\dots, a_n]a_{n+1}$, (when $n=0$, we write $a_0\ot a_1$ as $a_0[\,]a_1$), and the differential $b'$ is given by
\begin{align*}
b'(a_0[a_1,\dots,a_n]a_{n+1})&=a_0a_1[a_2,\dots,a_n]a_{n+1}+\sum_{i=1}^{n-1} (-1)^i a_0[a_1,\dots, a_ia_{i+1},\dots, a_n]a_{n+1} \\
&\varphantom{=}{}+(-1)^n a_0[a_1,\dots,a_{n-1}]a_na_{n+1}.
\end{align*}
Such a resolution is called the normalized bar resolution of $A$ in the literature.

Let $M$ be an $A$-bimodule. If we regard it as a right or left $A^e$-module, then we have two complexes $M\ot_{A^e} C^\mathrm{bar}_{\sbullet}(A)$ and $\Hom_{A^e}(C^\mathrm{bar}_{\sbullet}(A), M)$. Both complexes are isomorphic to
\[
\begin {tikzcd}[row sep=3mm, column sep=6mm]
C_{\sbullet}(A, M):  & \!\!\!\!\!\! M  & M\ot \bA \ar[l, "\dd_1"']  & M\ot \bA^{\ot 2} \ar[l, "\dd_2"']  & M\ot\bA^{\ot 3}  \ar[l, "\dd_3"']  & \cdots \ar[l, "\dd_4"'] \\
C^{\sbullet}(A, M): & \!\!\!\!\!\! M \ar[r, "\dd^0"] & \Hom(\bA, M) \ar[r, "\dd^1"]  & \Hom(\bA^{\ot 2}, M)  \ar[r, "\dd^2"]  & \Hom(\bA^{\ot 3}, M)  \ar[r, "\dd^3"]  & \cdots
\end {tikzcd}
\]
respectively, where
\begin{align*}
\dd_n(m[a_1,\dots, a_{n}])&=ma_1[a_2,\dots,a_{n}]+\sum_{i=1}^{n-1} (-1)^i m[a_1,\dots, a_ia_{i+1},\dots, a_{n}] \\
&\varphantom{=}{}+(-1)^{n} a_{n}m[a_1,\dots,a_{n-1}], \\
\dd^nf(a_1,\dots, a_{n+1})&=a_1f(a_2,\dots, a_{n+1})+\sum_{i=1}^{n}(-1)^i f(a_1,\dots, a_ia_{i+1}, \dots, a_{n+1}) \\
&\varphantom{=}{}+(-1)^{n+1}f(a_1,\dots, a_n)a_{n+1}, \quad \forall\, f\in \Hom(\bA^{\ot n}, M).
\end{align*}

\begin{definition} 
	$H_n(A,M)=H_n(C_{\sbullet}(A, M))$ is called the $n$th Hochschild homological group of $A$ with coefficients in $M$, and $H^n(A,M)=H^n(C^{\sbullet}(A, M))$ is called the $n$th Hochschild cohomological group of $A$ with coefficients in $M$. 
\end{definition}

It is direct to see that $H_n(A, M)=\Tor^{A^e}_n(M, A)$ and $H^n(A, M)=\Ext_{A^e}^n(A, M)$. So in practice, people usually compute Hochschild homology or cohomology by using a proper resolution of $A$, which is smaller than the bar resolution. 

We write $\HH^n(A)=H^n(A, A)$ and $\HH^*(A)=\bigoplus_{n\in\nan}\HH^n(A)$. For any automorphism $\nu$ of $A$, the notation $M^\nu$ means the $A$-bimodule whose base space is the same as $M$, and the right module action is twisted by $\nu$, namely,
\[
a\vartriangleright m\vartriangleleft a'=am\nu(a').
\]
%Similarly, one defines ${}^\nu M$.

\begin{definition}\label{def:skew-CY}
	An algebra $A$ is called $\nu$-skew Calabi--Yau of dimension $d$ for some $d \in\nan$ if
	\begin{enumerate}
		\item $A$ is homologically smooth, i.e., $A$ as a left (or equivalently, right) $A^e$-module, admits a finitely generated projective resolution of finite length,
		\item there are isomorphisms of $A$-bimodules
		\[
		\Ext^i_{A^e}(A, A^e)\cong
		\begin{cases}
		0,  & i\neq d,\\
		A^\nu, & i=d
		\end{cases}
		\]
		in which the regular left module structure on $A^e$ is used for computing the Ext-group, and the right one induces the $A$-bimodule structure on the Ext-group.
	\end{enumerate}
	In this case, $\nu$ is called the Nakayama automorphism of $A$.
\end{definition}

\begin{remark}\label{rmk:hochschild-dimension}
	Nakayama automorphism is unique up to inner, that is, if $\nu_1$ and $\nu_2$ are both Nakayama automorphisms of $A$, then there exists an invertible $u$ in $A$ such that $\nu_1(a)=u\nu_2(a)u^{-1}$ for all $a\in A$. In particular, if the invertible elements of $A$ are exactly the nonzero elements of $\kk$, the Nakayama automorphism is unique.
\end{remark}

The following theorem indicates a duality between Hochschild homology and cohomology for any skew Calabi--Yau algebra.

\begin{theorem}[Van den Bergh duality]\label{thm:VdB-duality}
	Let $A$ be a $\nu$-skew Calabi--Yau of dimension $d$. For any $A$-bimodule $M$ and  integer $i$, there is a  natural isomorphism $H^i(A, M)\cong H_{d-i}(A, M^\nu)$.
\end{theorem}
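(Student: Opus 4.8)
The plan is to deduce the Van den Bergh duality from the defining properties of a $\nu$-skew Calabi--Yau algebra by the standard derived-category manipulation, using a fixed finitely generated projective (indeed, in our situation, free) resolution $P_\sbullet \to A$ of $A$ over $A^e$. First I would observe that $H^i(A,M) = \Ext^i_{A^e}(A,M)$ is computed by the complex $\Hom_{A^e}(P_\sbullet, M)$, and that since each $P_j$ is finitely generated projective, there is a natural isomorphism of complexes
\[
\Hom_{A^e}(P_\sbullet, M) \cong \Hom_{A^e}(P_\sbullet, A^e) \ot_{A^e} M,
\]
where $\Hom_{A^e}(P_\sbullet, A^e)$ is formed using the \emph{left} $A^e$-module structure on $A^e$ and retains the \emph{right} $A^e$-module structure (equivalently an $A$-bimodule structure) coming from the other factor of $A^e$. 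Writing $P^\vee_\sbullet = \Hom_{A^e}(P_\sbullet, A^e)$, this exhibits $\Hom_{A^e}(P_\sbullet, M)$ as $P^\vee_\sbullet \ot_{A^e} M$.

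Next I would invoke the skew Calabi--Yau hypothesis: the homology of $P^\vee_\sbullet$ in degree $j$ is $\Ext^j_{A^e}(A, A^e)$, which vanishes for $j \neq d$ and equals $A^\nu$ for $j = d$. Since $A$ is homologically smooth, $P^\vee_\sbullet$ is a bounded complex of finitely generated projective right $A^e$-modules (i.e.\ $A$-bimodules), and it is therefore quasi-isomorphic, as a complex of $A$-bimodules, to $A^\nu$ placed in cohomological degree $d$ — equivalently, $P^\vee_\sbullet[d]$ is a finitely generated projective resolution of $A^\nu$ as a right $A^e$-module. Tensoring over $A^e$ with $M$ and taking homology then gives, after the degree shift,
\[
H^i(A,M) = H^i\bigl(P^\vee_\sbullet \ot_{A^e} M\bigr) \cong H_{d-i}\bigl(A^\nu \ot^{\mathbf{L}}_{A^e} M\bigr).
\]
Finally I would identify the right-hand side with Hochschild homology: $A^\nu \ot^{\mathbf{L}}_{A^e} M = \Tor^{A^e}_{\sbullet}(A^\nu, M)$, and the symmetry $\Tor^{A^e}_{d-i}(A^\nu, M) \cong \Tor^{A^e}_{d-i}(M^\nu, A)$ — obtained by transporting the twist from one tensor factor to the other via the algebra isomorphism $A^e \to A^e$, $a \ot b\op \mapsto \nu(a)\ot b\op$ or its variant — shows this equals $H_{d-i}(A, M^\nu)$. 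Naturality in $M$ is automatic because every step is a natural transformation of functors in $M$.

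The main obstacle, and the point deserving care, is the bookkeeping of the several commuting $A^e$-actions: $A^e$ is not commutative, so when one writes $\Hom_{A^e}(P_\sbullet, A^e)$ one must be precise about which side is used for the Hom and which side survives to give the $A$-bimodule structure on the Ext-groups — this is exactly the content of the parenthetical remarks in Definition~\ref{def:skew-CY}. A related subtlety is that the outer module structure on $A^e$ used to form $\Hom_{A^e}(P_\sbullet, A^e)$ is a \emph{left} $A^e$-action while the bimodule $M$ must be fed in as a \emph{left} $A^e$-module as well, so the tensor product $P^\vee_\sbullet \ot_{A^e} M$ requires first converting the residual right action on $P^\vee_\sbullet$ appropriately, or equivalently using the standard anti-isomorphism $A^e \cong (A^e)\op$; keeping the twist by $\nu$ correctly placed through this conversion is what produces $M^\nu$ rather than $M$ on the homology side. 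Everything else is the routine observation that for bounded complexes of finitely generated projectives, $\Hom$ and $\ot$ commute and quasi-isomorphisms are preserved.
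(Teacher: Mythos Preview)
Your sketch is correct and follows the standard derived-category argument. The paper does not give its own proof of this theorem: it simply cites Van den Bergh's original paper \cite{Van-den-Bergh:VdB-duality} and remarks that ``the duality is established in the language of derived category (ibid),'' which is exactly the route you take. One minor imprecision: calling $P^\vee_\sbullet[d]$ a ``projective resolution'' is not quite right unless one knows the resolution has length exactly $d$, but this is harmless since a bounded complex of projectives computes $-\ot^{\mathbf{L}}_{A^e} M$ regardless.
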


The above theorem is in fact a special situation of the main theorem proved by Van den Bargh \cite{Van-den-Bergh:VdB-duality}.  Van den Bergh's original proof only requires that $\Ext^d_{A^e}(A, A^e)$ is an invertible bimodule, not necessarily of the form $A^\nu$.  The duality is established in the language of derived category (ibid); however, we use the language of homotopy category instead, for the algebras that we study. %Surprisingly, Van den Bergh duality for skew Calabi--Yau algebras can be established by a cochain map that has a quasi-inverse. We will present the proof of this fact in Section \ref{sec:vdb-duality}.

Next we introduce Gerstenhaber algebras, which originated Gerstenhaber's famous contribution to  Hochschild cohomology theory (see \cite{Gerstenhaber:cohomology} or \cite{Gerstenhaber-Schack:deformation}).

\begin{definition}
	Let $\Hh=\Hh^{\sbullet}$ be a graded vector space, $\smallsmile$ and $[\cdot,\cdot]$ be binary operations on $\Hh$ whose degrees are $0$ and $-1$ respectively. $(\Hh, \smallsmile, [\cdot, \cdot])$ is called a Gerstenhaber algebra if 
	\begin{enumerate}
		\item $(\Hh,\smallsmile)$ is an graded commutative, associative algebra, namely, $a\smallsmile b=(-1)^{|a||b|}b\smallsmile a$ for all homogeneous $a$ and $b$,
		\item $(\Hh[1], [\cdot ,\cdot ])$ is a graded Lie algebra, where $[1]$ means degree shift by $1$.
		\item both operations satisfy the graded Leibniz rule
		\[
		[a, b\smallsmile c]=[a, b]\smallsmile c +(-1)^{(|a|-1)|b|}b\smallsmile [a, c].
		\]
	\end{enumerate}
	The operations $\smallsmile$, $[\cdot, \cdot]$ are called the cup product and the Gerstenhaber bracket respectively.
\end{definition}

The following example is given by Gerstenhaber \cite{Gerstenhaber:cohomology}.

\begin{example}\label{ex:ger4}
	Let $A$ be an algebra. For any $f\in C^m(A,A)$, $g\in C^n(A, A)$, define $f\smallsmile g\in C^{m+n}(A,A)$ and $f\mathbin{\bar{\circ}} g \in C^{m+n-1}(A,A)$ by
	\begin{align*}
	(f\smallsmile g)(a_1, a_2, \dots, a_{m+n})&=(-1)^{mn} f(a_1, a_2, \dots, a_m)g(a_{m+1}, a_{m+2}, \dots, a_{m+n}), \\
	(f\mathbin{\bar{\circ}} g) (a_1, a_2, \dots, a_{m+n-1})&=\sum_{i=1}^m (-1)^{(i-1)(n-1)} f(a_1, \dots, a_{i-1}, g_i, a_{i+n}, \dots, a_{m+n}),
	\end{align*}
	where $g_i=g(a_i, \dots, a_{i+n-1})$. Let $[f,g]=f\mathbin{\bar{\circ}} g-(-1)^{(m-1)(n-1)} g\mathbin{\bar{\circ}} f$.  Then $\smallsmile$ and $[\cdot,\cdot]$ descend to Hochschild cohomology, making $(\HH^*(A), \smallsmile, [\cdot,\cdot])$ into a Gerstenhaber algebra.
\end{example}

Batalin--Vilkovisky algebras are a class of Gerstenhaber algebras, which arise from the BRST theory of topological field theory \cite{Becchi-Rouet-Stora:renormalization}. There are a great deal of interests in these algebras in connection with subjects such as string theory and Poisson geometry (cf.\ \cite{Getzler:bv-topological-field}, \cite{Huebschmann:lie-rinehart}, \cite{Kimura-Voronov-Stasheff:operad-moduli-string}, \cite{Lian-Zukerman:brst-string}, \cite{Xu:bv-poisson-geometry}). 

\begin{definition}
	Let $\Hh=\Hh^{\sbullet}$ be a graded vector space, $\smallsmile$ be a binary operation on $\Hh$ of degree $0$, and $\De$ be an operator on $\Hh$ of order two whose degree is $-1$. Then $(\Hh, \smallsmile, \De)$ is called a Batalin--Vilkovisky algebra, or a BV algebra for short, if
	\begin{enumerate}
		\item $(\Hh,\smallsmile)$ is an graded commutative, associative algebra,
		\item $\De^2=0$.
	\end{enumerate}
\end{definition}

\begin{remark}
	$\De$ is called a BV operator. When $\De(1)=0$, $\De$ is an operator of order two if and only if 
	\begin{align*}
	\De (a\smallsmile b\smallsmile c)&=\De(a\smallsmile b)\smallsmile c+(-1)^{|a|}a\smallsmile\De (b\smallsmile c)+(-1)^{(|a|-1)|b|}b\smallsmile\De (a\smallsmile c) \\
	&\varphantom{=}{}-\De (a)\smallsmile b\smallsmile c-(-1)^{|a|}a\smallsmile \De(b)\smallsmile c-(-1)^{|a|+|b|}a\smallsmile b\smallsmile \De(c)
	\end{align*}
	for all homogeneous $a$, $b$ and $c$.
\end{remark}

A BV algebra is a Gerstenhaber algebra. In fact, for any homogeneous $a$, $b$, by defining
\begin{equation}\label{eq:gerstenhaber-bracket-generator}
[a, b]=(-1)^{|a|}(\De(a\smallsmile b)-\De(a)\smallsmile b-(-1)^{|a|}a\smallsmile \De(b) ),
\end{equation}
we then have a  Gerstenhaber algebra $(\Hh, \smallsmile, [\cdot,\cdot])$. 

\begin{example}
	Hochschild cohomology $\HH^*(A)$ is a BV algebra in each of the following cases:
	\begin{enumerate}
		\item $A$ is a symmetric Frobenius algebra \cite{Tradler:bv-inner-product},
		\item $A$ is a Frobenius algebra with semisimple Nakayama automorphism \cite{Lambre-Zhou-Zimmermann:bv-frobenius-semisimple},
		\item $A$ is a Calabi--Yau algebra \cite{Ginzburg:CY-alg},
		\item $A$ is a skew Calabi--Yau algebra with semisimple Nakayama automorphism \cite{Kowalzig-Krahmer:BV-twisted-CY}.
	\end{enumerate}
\end{example}

In the above example, (1) is a special case of (2), and (3)  is a special case of (4). All of the cases have the common ground that the BV operator is induced by the Connes operator, by virtue of the duality between Hochschild homology and cohomology. For our purpose of this paper, let us briefly introduce some notations and conclusions related to item (4).

Let $A$ be a skew Calabi--Yau algebra whose Nakayama automorphism $\nu$ is semisimple. We assume that $\nu$ is diagonalizable, extending the ground field $\kk$ if necessary. Let $\Lambda$ be the spectrum space of $\nu$, and thus  $A=\bigoplus_{\lambda\in \Lambda} A_{(\lambda)}$. Let $\bA_{(1)}=A_{(1)}/\kk$ and $\bA_{(\lambda)}=A_{(\lambda)}$ ($1\neq \lambda\in \Lambda$), then $\bA=\bigoplus_{\lambda\in \Lambda} \bA_{(\lambda)}$. Denote by $\widehat{\Lambda}$ the monoid generated by $\Lambda$ in $\kk^\times$. For any $\mu\in \widehat{\Lambda}$, let
\[
C^{(\mu)}_n(A, A^\nu)=\bigoplus_{\lambda_i\in \Lambda, \, \prod \lambda_i=\mu} A_{(\lambda_0)}\ot \bA_{(\lambda_1)}\ot \cdots \ot \bA_{(\lambda_n)}.
\]
It is routine the check that $C_{\sbullet}^{(\mu)}(A, A^\nu)$ is a subcomplex of $C_{\sbullet}(A, A^\nu)$ and
\[
C_{\sbullet}(A, A^\nu)=\bigoplus_{\mu\in\widehat{\Lambda}} C_{\sbullet}^{(\mu)}(A, A^\nu).
\] 
In \cite{Kowalzig-Krahmer:BV-twisted-CY} it is proved that $H_*(C_{\sbullet}^{(\mu)}(A, A^\nu))=0$ if $\mu\neq 1$, and hence $H_*(A, A^\nu)=H_*(C_{\sbullet}^{(1)}(A, A^\nu))$.

Similarly, let
\[
C^n_{(\mu)}(A, A)=\Bigl\{f\in C^n(A, A) \Bigm| f(\bA_{(\lambda_1)}\ot \cdots\ot \bA_{(\lambda_n)}) \subset A_{(\mu\lambda_1\cdots\lambda_n)}, \, \forall \, \lambda_i \in \Lambda \Bigr\},
\]
and by convention $A_{(\lambda)}=0$ if $\lambda \notin \Lambda$. We then have the subcomplex $C^{\sbullet}_{(\mu)}(A, A)$ of $C^{\sbullet}(A, A)$, whose $n$th cohomological group is denoted by $\HH^n_{(\mu)}(A)$. Notice that $C^{\sbullet}(A, A)$ cannot be decomposed as the direct sum of these $C^{\sbullet}_{(\mu)}(A, A)$. However, the inclusion $C^{\sbullet}_{(1)}(A, A)\to C^{\sbullet}(A, A)$ is a quasi-isomorphism. Therefore, the Van den Bergh duality $\HH^i(A)\cong H_{d-i}(A, A^\nu)$ is nothing but $\HH^i_{(1)}(A)\cong H_{d-i}(C_{\sbullet}^{(1)}(A, A^\nu))$.

\begin{theorem}[{\cite[\S7]{Kowalzig-Krahmer:BV-twisted-CY}}]\label{thm:kk-bv}
	Let $A$ be a $\nu$-skew Calabi--Yau algebra with $\nu$ semisimple. The Connes operator 
	\begin{align*}
	\BB\colon C_{n}^{(1)}(A, A^\nu) & \longrightarrow C_{n+1}^{(1)}(A, A^\nu) \\
	a_0[a_1, \ldots, a_n] & \longmapsto \sum_{i=0}^n (-1)^{in} 1[a_i,\dots, a_n, a_0, \nu(a_1), \dots, \nu(a_{i-1})]
	\end{align*}
	induces an operator $\De$ on $\HH^*_{(1)}(A)$ via the Van den Bergh duality. Then $(\HH^*(A), \smallsmile, \De)$ is a BV algebra.
\end{theorem}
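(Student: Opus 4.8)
The plan is to transport the Connes operator $\BB$ across the Van den Bergh duality (Theorem~\ref{thm:VdB-duality}) and then verify the two axioms of a BV algebra directly. Let $d$ be the dimension of $A$. Fix a cycle $\omega\in C_d(A,A^\nu)$ whose class corresponds to $1\in\HH^0(A)$ under the duality; then the isomorphism $\HH^i(A)\cong H_{d-i}(A,A^\nu)$ is realized by the cap product $f\mapsto f\cap\omega$, where Hochschild homology with coefficients in $A^\nu$ is regarded as a module over the cup-product algebra $\HH^*(A)$ in the standard way. Define $\De$ on $\HH^*(A)$ by the requirement
\[
(\De f)\cap\omega=\BB(f\cap\omega).
\]
For this to make sense one first checks that the twisted Connes operator of Theorem~\ref{thm:kk-bv} makes $(C_{\sbullet}(A,A^\nu),b,\BB)$ a mixed complex on the normalized chains, i.e.\ $\BB^2=0$ and $b\BB+\BB b=0$ there; inserting $\nu$ in the last $i-1$ arguments is exactly what is needed for the usual cancellations to go through with the $\nu$-twisted right action, and since $\nu$ fixes each eigenspace $A_{(\lambda)}$ the operator $\BB$ preserves the weight-$1$ subcomplex $C^{(1)}_{\sbullet}(A,A^\nu)$, which carries all of $H_*(A,A^\nu)$. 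Two of the three facts to prove are then essentially free: $\De(1)\in\HH^{-1}(A)=0$ for degree reasons, and $\De^2=0$ because $\BB^2=0$ (the standard argument is insensitive to the $\nu$-insertions).

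It remains to show that $\De$ is an operator of order two for $\smallsmile$; since $\De(1)=0$, by the Remark following the definition of a BV algebra this is the seven-term identity, and the most economical route is to identify the bracket $[\,\cdot\,,\cdot\,]'$ that $\De$ generates through~\eqref{eq:gerstenhaber-bracket-generator} with the Gerstenhaber bracket $[\,\cdot\,,\cdot\,]$ of Example~\ref{ex:ger4}. For this I would invoke the noncommutative differential calculus on $(\HH^*(A),H_*(A,A^\nu))$ --- cap product $\iota_f=f\cap-$, Lie derivative $L_f$, and the operator $\BB$ --- in the $\nu$-twisted form due to Kowalzig and Kr\"ahmer, generalizing the classical calculus of Rinehart, Getzler and Tamarkin--Tsygan. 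The two structural identities needed are Cartan's homotopy formula $L_f=\BB\iota_f-(-1)^{|f|}\iota_f\BB$ on (co)homology and $\iota_{[f,g]}=(-1)^{|f|-1}\bigl(L_f\iota_g-(-1)^{(|f|-1)|g|}\iota_g L_f\bigr)$. Evaluating the latter at $\omega$, using $\iota_{f\smallsmile g}=\iota_f\iota_g$, normalizing $\omega$ so that $\iota_1\omega=\omega$, and substituting Cartan's formula, the right-hand side collapses (up to the sign bookkeeping) to $\De(f\smallsmile g)\cap\omega-(\De f\smallsmile g)\cap\omega-(-1)^{|f|}(f\smallsmile\De g)\cap\omega$; since $-\cap\omega$ is an isomorphism this gives $(-1)^{|f|}[f,g]=\De(f\smallsmile g)-\De f\smallsmile g-(-1)^{|f|}f\smallsmile\De g$, i.e.\ $[\,\cdot\,,\cdot\,]'=[\,\cdot\,,\cdot\,]$. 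Consequently the seven-term identity for $\De$ is equivalent to $[\,\cdot\,,\cdot\,]$ being a biderivation for $\smallsmile$, i.e.\ to the graded Leibniz rule (axiom (3) of a Gerstenhaber algebra), which holds for $\HH^*(A)$. Together with $\De^2=0$ and the graded commutativity and associativity of $\smallsmile$, this shows $(\HH^*(A),\smallsmile,\De)$ is a BV algebra.

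The main obstacle, I expect, is porting the entire calculus from the classical (untwisted) setting --- where $H_*(A,A)$ is a module over $\HH^*(A)$, $\BB^2=0$, $b\BB+\BB b=0$, and Cartan's formulas hold at chain level up to explicit homotopies --- to the twisted setting with coefficients in $A^\nu$ and the $\nu$-modified $\BB$. This is precisely where semisimplicity of $\nu$ is used: after extending scalars so that $\nu$ is diagonalizable, the weight decomposition $C_{\sbullet}(A,A^\nu)=\bigoplus_{\mu\in\widehat{\Lambda}}C^{(\mu)}_{\sbullet}(A,A^\nu)$ isolates the summand $\mu=1$, on which $\BB$ is defined and the combinatorial identities of the calculus can be re-derived by the same manipulations as in the classical proofs --- this is the technical heart of \cite[\S7]{Kowalzig-Krahmer:BV-twisted-CY}. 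A secondary, bookkeeping-level obstacle is to check that the cap product and Lie derivative entering the calculus restrict to the standard operations and that a fundamental cycle $\omega$ can be chosen with $\iota_1\omega=\omega$, so that the $\De$ produced by the construction is literally the operator in the statement and the identification of the two brackets is an equality rather than merely a compatibility up to isomorphism.
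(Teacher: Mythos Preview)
The paper does not give its own proof of this theorem: it is quoted verbatim from \cite[\S7]{Kowalzig-Krahmer:BV-twisted-CY} and used as a black box in Sections~\ref{sec:bv-structure}--\ref{sec:computation-bv}. There is therefore nothing in the present paper to compare your proposal against.

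That said, your outline is an accurate summary of the argument in the cited reference. The Van den Bergh duality is indeed realized by capping with a fundamental class, the twisted Connes operator on the weight-one subcomplex fits into a noncommutative differential calculus (contraction, Lie derivative, Cartan homotopy), and the identification of the $\De$-bracket with the Gerstenhaber bracket via $\iota_{[f,g]}=(-1)^{|f|-1}[L_f,\iota_g]$ is exactly how Kowalzig--Kr\"ahmer close the argument. Two small remarks: the condition $\iota_1\omega=\omega$ is automatic (capping with the unit cocycle is the identity), so no normalization is needed there; and the claim that $\BB$ preserves $C^{(1)}_{\sbullet}$ because ``$\nu$ fixes each eigenspace'' is slightly misstated --- $\nu$ acts on $A_{(\lambda)}$ by $\lambda$, not as the identity, and it is rather that the product of eigenvalues is unchanged under the cyclic rotation with the $\nu$-twist. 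Neither affects the validity of your plan.
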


\begin{remark}
	In fact, $\BB$ is defined on the whole complex $C_{\sbullet}(A, A^\nu)$. Moreover, when $\nu$ is identity map, namely, $A$ is Calabi--Yau, $\BB$ is  the famous Connes operator in cyclic homology theory.
\end{remark}

%\begin{remark}
%	Van den Bergh duality also determines the so-called cap product $\smallfrown$ between $\HH^{*}_{(1)}(A)$ and $H_{*}^{(1)}(A, A^\nu)$, making $(\HH^{*}_{(1)}(A), \smallsmile, [\cdot, \cdot], H_{*}^{(1)}(A, A^\nu), \smallfrown, \BB)$ into a Tamarkin--Tsygan calculi (see \cite{Lambre-Zhou-Zimmermann:bv-frobenius-semisimple} for details).
%\end{remark}

The notion of generalized Weyl algebra was introduced by Bavula in the mid-1990's. For the general definition of generalized Weyl algebras, please refer to \cite{Bavula:GWA-def}, \cite{Bavula:GWA-tensor-product}. Many researchers in algebra or mathematical physics are interested in a subclass of generalized Weyl algebras which are realized as extensions over a polynomial algebra in one variable. Let us present the definition of them here. 

\begin{definition}\label{def:gwa}
	Let $\si$ be an automorphism of the polynomial algebra $\kk[z]$, $p=p(z)$ be a polynomial of degree at least $1$. Define 
	\[
	A=\kk\langle x, y, z \,|\, xz=\si(z)x, yz=\si^{-1}(z)y, yx=p, xy=\si(p)\rangle,
	\]
	and call it the generalized Weyl algebra determined by $\si$ and $p$.
\end{definition}

Obviously, the basis for $A$ is
\[
\mathcal{B}=\{y^kz^ix^j \,|\, k,i,j\in \mathbb{N}, \, kj=0 \}. 
\]
By the generators and relations, $A$ is a $\mathbb{Z}$-graded algebra, where
\[
A^j=
\begin{cases}
\bigoplus_i \kk z^ix^j, & j\ge 0,\\
\bigoplus_i \kk y^{-j} z^i, & j<0.
\end{cases}
\]
%We also need to consider gradings on BV algebras;  in order to avoid confusion, the $\mathbb{Z}$-grading on a generalized Weyl algebra will be called the Bavula degree throughout this paper. 

Notice that $\si$ is completely determined by $\si(z)$, which can be written as $qz+c$ for some $q$, $c\in\kk$ and $q\neq 0$. According to \cite{Solotar:Hochschild-homology-GWA-quantum}, up to isomorphism, a generalized Weyl algebra $A$ belongs to exactly one of the three types:
\begin{description}
	\item[commutative] $q=1$, $c=0$. 
	\item[classical] $q=1$, $c\neq 0$. The classical Weyl algebra $A_1(\kk)$ is in this type.
	\item[quantum] $q\neq 1$, $c=0$. The quantum Weyl algebra $\kk\langle x,y \,|\, xy-qyx=1\rangle$ is in this type.
\end{description}

There is a characterization that a generalized Weyl algebra is skew Calabi--Yau.

\begin{theorem}[{\cite[Thm 4.5]{L:gwa-def}}]
	A generalized Weyl algebra $A$ is skew Calabi--Yau algebra if and only if $p$ has no multiple roots. In this situation, the Nakayama automorphism $\nu$ is given by
	\[
	\nu(x)=qx, \;  \nu(y)=q^{-1}y, \; \nu(z)=z,
	\]
	and $A$ is of dimension $2$.
\end{theorem}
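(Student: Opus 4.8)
The plan is to read everything off the explicit $A^e$-free resolution $\FF$ recalled above, whose terms are $A^e$, $(A^e)^3$, and then $(A^e)^4$ in every degree $\ge 2$; one writes the differentials $d_1, d_2, d_3$, together with the periodic pattern of $d_n$ for $n \ge 3$, as matrices over $A^e$ in the generators $x, y, z$ and the relation polynomial $p$, as supplied by \cite{L:gwa-def} or \cite{Solotar:Hochschild-homology-GWA-quantum} (the commutative and classical types carry analogous resolutions, cf.\ \cite{Farinati:Hochschid-homology-GWA}, and the argument below applies verbatim with $q=1$). The first task is homological smoothness. Applying $\Hom_{A^e}(-, N)$ to $\FF$ for an $A^e$-module $N$ yields a complex which from degree $2$ onward is periodic, built from a single $4 \times 4$ matrix whose entries involve only $p$, its derivative $p'$, and the scalars $q^{\pm 1}$. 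When $p$ has no multiple root one has $(p, p') = \kk[z]$, and I expect this to force the second syzygy $\Ker d_2 \subset (A^e)^3$ to be a projective (in fact free) $A^e$-module --- equivalently, $\Ext^n_{A^e}(A, N) = 0$ for every $n > 2$ and every $N$; since $A^e$ is Noetherian, a syzygy argument then produces a finitely generated projective resolution of $A$ of length $2$, so $A$ is homologically smooth of Hochschild dimension $2$. Conversely, if $p$ has a multiple root one shows that the periodic tail of $\FF$ is not contractible --- concretely, by exhibiting nonzero classes in $\HH^n(A)$ for arbitrarily large $n$ --- so $\operatorname{pd}_{A^e} A = \infty$ and $A$ cannot be skew Calabi--Yau; in the commutative type this is simply the fact that the surface $xy = p(z)$ is singular at $(0, 0, \alpha)$ whenever $p(\alpha) = p'(\alpha) = 0$.

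Assume now that $p$ is squarefree. The second task is to compute the $A$-bimodules $\Ext^i_{A^e}(A, A^e)$. Apply $\Hom_{A^e}(-, A^e)$ to $\FF$, using the left regular structure of $A^e$ to form the Hom and recording the right regular structure as the resulting $A$-bimodule structure, to obtain a cochain complex of $A$-bimodules
\[
\begin{tikzcd}[column sep=8mm]
A^e \ar[r, "d_1^{\vee}"] & (A^e)^3 \ar[r, "d_2^{\vee}"] & (A^e)^4 \ar[r, "d_3^{\vee}"] & (A^e)^4 \ar[r] & \cdots .
\end{tikzcd}
\]
By the vanishing just established, its cohomology is concentrated in degree $2$, and a direct kernel/cokernel computation with the transposed matrices identifies $H^2 = \Ext^2_{A^e}(A, A^e)$ with a single copy of $A$ as a left $A$-module. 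The remaining point is to decide which $A$-bimodule it is: choose a generator of $H^2$ and compute the right action of $x$, $y$, $z$ on it by pushing the generator through $d_2$ and rewriting with $xz = qzx$, $yz = q^{-1}zy$, $yx = p(z)$, $xy = p(qz)$; the scalars that appear are exactly $q$, $q^{-1}$, $1$, whence $\Ext^2_{A^e}(A, A^e) \cong A^{\nu}$ with $\nu(x) = qx$, $\nu(y) = q^{-1}y$, $\nu(z) = z$. Combined with $\Ext^i_{A^e}(A, A^e) = 0$ for $i \ne 2$, this is precisely the assertion that $A$ is $\nu$-skew Calabi--Yau of dimension $2$.

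The step I expect to be the real obstacle is the bookkeeping around the periodic tail of $\FF$: showing cleanly that ``$p$ squarefree'' makes the second syzygy projective --- so that the infinite resolution $\FF$ collapses to one of length $2$ --- while ``$p$ has a multiple root'' obstructs every such truncation, and doing so with the $\si$-twists (the substitutions $z \mapsto q^k z$) placed correctly throughout the matrices. Once a finite resolution is available, the remaining work --- the computation of $\Ext^*_{A^e}(A, A^e)$ and the identification of the twist $\nu$ --- is a finite, if delicate, piece of linear algebra over $A^e$, and the eigenvalues $q$, $q^{-1}$, $1$ of $\nu$ emerge directly from the defining commutation relations of $A$.
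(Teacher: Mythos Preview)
The present paper does not prove this theorem: it is quoted from \cite[Thm~4.5]{L:gwa-def} without argument, so there is no proof here to compare your proposal against line by line. Your outline is in fact the strategy of the cited source --- compute $\Ext^*_{A^e}(A,A^e)$ directly from the periodic resolution $\FF$ and read off both the vanishing in degrees $\ne 2$ and the bimodule twist in degree $2$.

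That said, Section~\ref{sec:vdb-duality} of the present paper \emph{does} contain the technical core of the ``if'' direction, in a stronger form than you sketch. You write that you ``expect'' the condition $(p,p')=\kk[z]$ to force the periodic tail to contribute nothing, and flag this as the real obstacle. The paper makes this precise by choosing $\al,\be$ with $\al p+\be p'=1$ and then writing down, by hand, cochain maps $f,g$ between $\Ss^\sbullet(A^e)=\Hom_{A^e}(\FF,A^e)$ and $\TT^\sbullet(A^e)[-2]=(A^e)^\nu\ot_{A^e}\FF[-2]$ together with explicit homotopies $s,t$ (Lemma~\ref{lem:f-g-commutative}, Proposition~\ref{prop:quasi-inverse}). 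Since $\TT^\sbullet(A^e)[-2]$ has homology concentrated in degree~$2$ and isomorphic to $A^\nu$ there, this simultaneously gives the vanishing of $\Ext^i_{A^e}(A,A^e)$ for $i\ne 2$, the isomorphism $\Ext^2_{A^e}(A,A^e)\cong A^\nu$, and finiteness of the Hochschild dimension. So rather than arguing abstractly that the second syzygy is projective, the paper exhibits an explicit contracting homotopy for the periodic part, with $\al,\be$ appearing in every formula; this is where the squarefreeness hypothesis is cashed in.

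The converse (multiple roots imply infinite Hochschild dimension) is not touched in this paper at all; for that you must go to \cite{L:gwa-def} or \cite{Solotar:Hochschild-homology-GWA-quantum}, and your idea of exhibiting nonzero periodic cohomology classes is indeed how it is done there.
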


By the previous theorem, $\nu$ is diagonalizable, and hence semisimple. So when $p$ has no multiple roots, $\HH^*(A)$ is a BV algebra. Furthermore, the eigenvalues of $\nu$ are $q^j$ for all $j\in\mathbb{Z}$, and the eigenspace $A_{(q^j)}$ corresponding to $q^j$  is $A^j$.

\section{Van den Bergh duality}\label{sec:vdb-duality}

In this section, $A$ is always a skew Calabi--Yau generalized Weyl algebra of quantum type. Since $p=p(z)$ has no multiple roots, we write $p'$ for the formal derivative of $p$, and fix polynomials $\al=\al(z)$, $\be=\be(z)$ such that $\al p+\be p'=1$. 

We will present a nice free resolution $\FF$ of $A$ as a left $A^e$-module. This resolution was constructed in \cite{Solotar:Hochschild-homology-GWA-quantum}, and independently in \cite{L:gwa-def}. In order to make the differentials of $\FF$ clearly expressed, some notations are needed.

For any $\vphi\in\kk[z]$, we write $\wt{\vphi}=\si(\vphi)$. By Definition \ref{def:gwa}, we have $x\vphi=\wt{\vphi}x$ and $\vphi y=y\wt{\vphi}$. Define the linear map $\mathfrak{d}$ to be
\begin{align}
\mathfrak{d} \colon \kk[z] &\longrightarrow \kk[z]\ot\kk[z] \notag\\
1&\longmapsto 0 \notag\\
z^i & \longmapsto \sum_{j=1}^{i}z^{i-j}\ot z^{j-1} \label{eq:nc-der}
\end{align}
for all $i\geq 1$, and we use Sweedler's notation to write $\mathfrak{d}(\vphi)$ as $\sum\vphi_1\ot \vphi_2$, or more simply $\vphi_1\ot \vphi_2$. Since $\kk[z]\ot\kk[z]$ is a subspace of $A^e$, $\vphi_1\ot \vphi_2$ can be viewed as an element of $A^e$ naturally.

Define the left $A^e$-module complex $\FF=(\FF_{\sbullet}, d)$ as follows:
\[
\begin {tikzcd}
A^e  & (A^e)^3 \ar[l, "d_1"'] & (A^e)^4 \ar[l, "d_2"'] & (A^e)^4  \ar[l, "d_3"'] & (A^e)^4 \ar[l, "d_4"'] & \cdots \ar[l, "d_5"']
\end {tikzcd}
\]
where all elements of the free modules are written as row vectors, and the differentials are expressed as matrices:
\begin{align*}
d_1&=
\begin{bmatrix}
x\ot 1-1\ot x \\
y\ot 1-1\ot y \\
z\ot 1-1\ot z
\end{bmatrix},
\\
d_2&=
\begin{bmatrix}
y\ot 1 & 1\ot x & -p_1\ot p_2 \\
1\ot y & x\ot 1 & -q\wt{p_1}\ot \wt{p_2} \\
\wt{z}\ot 1-1\ot z & 0 & q\ot x-x\ot 1 \\
0 & z\ot 1-1\ot \wt{z} & 1\ot y-y\ot q
\end{bmatrix},
\\
d_3&=	
\begin{bmatrix}
x\otimes 1 & -1\otimes x & -\wt{p_1}\ot p_2 & 0 \\
-1\otimes y & y\otimes 1 & 0 & -p_1\ot\wt{p_2} \\
z\otimes1-1\otimes z & 0 & -y\otimes 1 & -1\otimes x \\
0 & \wt{z}\otimes 1-1\otimes\wt{z} & -1\otimes y & -x\otimes 1
\end{bmatrix},
\\
d_4&=
\begin{bmatrix}
y\ot 1 & 1\ot x & -p_1\ot p_2 & 0 \\
1\ot y & x\ot 1 & 0 & -\wt{p_1}\ot \wt{p_2} \\
\wt{z}\ot 1-1\ot z & 0 & -x\ot 1 & 1\ot x \\
0 & z\ot 1-1\ot \wt{z} & 1\ot y & -y\ot 1
\end{bmatrix}.
\end{align*}
For $i\geq 5$, define $d_{i}=d_{i-2}$.

\begin{proposition}[{\cite[\S3.2]{L:gwa-def}} or {\cite[\S4.3]{Solotar:Hochschild-homology-GWA-quantum}}]
	The complex $\FF$ is a free resolution of $A$ as a left $A^e$-module, via the multiplication $A^e\to A$.
\end{proposition}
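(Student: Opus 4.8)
The plan is to establish two things: first, that $(\FF,d)$ together with the multiplication map $\varepsilon\colon A^e\to A$ is an augmented complex; second, that this augmented complex is acyclic.

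The first is a finite verification. By the prescription $d_i=d_{i-2}$ for $i\ge 5$ it suffices to check $\varepsilon d_1=0$, $d_1d_2=0$, $d_2d_3=0$, $d_3d_4=0$ and $d_4d_3=0$, which I would do by expanding the displayed matrix products. The relation $\varepsilon d_1=0$ is immediate because each entry of $d_1$ has the form $a\ot1-1\ot a$. For the other four, every cancellation that occurs is an instance of one of: the commutation rules $x\vphi=\wt\vphi\,x$, $\vphi y=y\wt\vphi$ for $\vphi\in\kk[z]$ (equivalently $\vphi x=x\si^{-1}(\vphi)$ and $y\vphi=\si^{-1}(\vphi)y$); the defining relations $yx=p$ and $xy=\si(p)$; and the ``divided difference'' identities
\[
(z\ot1-1\ot z)(\vphi_1\ot\vphi_2)=\vphi\ot1-1\ot\vphi,\qquad (\wt z\ot1-1\ot z)(\wt{\vphi_1}\ot\vphi_2)=\wt\vphi\ot1-1\ot\vphi,
\]
together with their $\op$-counterparts, the first following from \eqref{eq:nc-der} by a telescoping sum and the second by applying $\si$ to its left tensor factor. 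These are exactly what makes the ``$p$''-columns of $d_2,d_3,d_4$ annihilate against the ``$z$''-rows of $d_1$ and $d_3$, and so on.

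For acyclicity I would avoid computing the homology of $\FF$ directly and instead exhibit $\FF$ as built from standard pieces. Viewing $A$ as a twisted form of the three-variable hypersurface ring $\kk[z,x,y]/(yx-p(z))$, one starts from the Koszul resolution of $\kk[z,x,y]$ over its enveloping algebra (term ranks $1,3,3,1$) and applies the standard hypersurface-perturbation construction (of Guccione--Guccione / BACH type, in its twisted version) to obtain an $A^e$-resolution of $A$; concretely this resolution is the totalization of $\big(\text{Koszul complex on }z\ot1-1\ot z,\ x\ot1-1\ot x,\ y\ot1-1\ot y\big)\ot\big(\text{divided-power algebra on a degree-}2\text{ class for }yx-p\big)$, whose term ranks are $\sum_k\binom{3}{m-2k}$, i.e.\ $1,3,4,4,4,\dots$, and which is $2$-periodic in its differentials from degree $3$ on---this is the source of $d_i=d_{i-2}$. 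Running this construction through and matching the twists $\si$ reproduces the matrices $d_1,\dots,d_4$, and acyclicity of $\FF$ is inherited from acyclicity of the Koszul complex via the spectral sequence of the underlying bicomplex, one of whose factors is exact. As a more self-contained alternative, one can compare $\FF$ with the normalized bar resolution $C^\mathrm{bar}_{\sbullet}(A)$: using the monomial basis $\mathcal{B}$ one writes down $A^e$-linear chain maps $\FF\to C^\mathrm{bar}_{\sbullet}(A)$ and $C^\mathrm{bar}_{\sbullet}(A)\to\FF$ over $1_A$ together with a contracting homotopy, checks that the first composite is $1_\FF$ and the second is homotopic to $1_{C^\mathrm{bar}_{\sbullet}(A)}$, and concludes that $\FF$ is a resolution because $C^\mathrm{bar}_{\sbullet}(A)$ is.

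The step $d^2=0$ is routine matrix algebra once the identities above are in hand; the genuine obstacle is acyclicity. The matrices $d_2,d_3,d_4$ are entangled enough that reading off kernels by inspection is unpleasant, so the work lies either in correctly running the hypersurface-perturbation bookkeeping---and checking that the twist $\si$ is inserted consistently throughout---or, in the comparison route, in producing the explicit contracting homotopy; in either case the periodicity of $\FF$ is best understood as a consequence of the hypersurface nature of $A$ rather than an accident of the formulas.
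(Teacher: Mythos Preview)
The paper does not prove this proposition; it is quoted from \cite[\S3.2]{L:gwa-def} and \cite[\S4.3]{Solotar:Hochschild-homology-GWA-quantum} without argument. So there is no in-paper proof to compare against, and your proposal is necessarily ``different'' from what the paper does.

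As for the content of your sketch: the first half (that $(\FF,d,\varepsilon)$ is a complex) is fine, and you have correctly isolated the identities---the divided-difference relations for $\mathfrak d$ and the commutation rules $x\vphi=\wt\vphi x$, $\vphi y=y\wt\vphi$---that make the matrix products vanish. For acyclicity, your heuristic is the right one: the eventual $2$-periodicity is a hypersurface phenomenon, and the rank pattern $1,3,4,4,\dots$ is exactly what one gets by totalizing a Koszul complex on three generators against a degree-$2$ divided-power factor. However, the Guccione--Guccione/BACH machinery you invoke is set up for commutative (or centrally defined) hypersurfaces, and here neither $x,y,z$ commute nor is the relation $yx-p(z)$ central, so ``running this construction through and matching the twists $\si$'' hides real work; this is the place where a referee would ask you to be explicit. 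The arguments in the cited references proceed differently: in \cite{Solotar:Hochschild-homology-GWA-quantum} the resolution is assembled using the $\mathbb Z$-grading on $A$ (so that one first resolves the graded pieces over $\kk[z]^e$ and then splices), while in \cite{L:gwa-def} it is obtained by writing $A$ as an iterated skew extension and building the resolution in stages. Either route avoids having to adapt the commutative hypersurface formalism wholesale. Your alternative---producing explicit comparison maps with the bar resolution and a contracting homotopy---would certainly work but is laborious; if you pursue it, note that the paper already gives you half of the comparison (the maps $\eta_0,\eta_1,\eta_2$ in \S\ref{sec:computation-bv}), so you would only need a quasi-inverse and the homotopy.
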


Since $\FF$ has periodicity two when $i\geq 3$, we say $\FF$ to be the periodical resolution of $A$. For any $A$-bimodule $M$, $\Hom_{A^e}(\FF, M)$, $M^\nu\ot_{A^e}\FF$ are isomorphic to 
\[
\begin {tikzcd}[row sep=3mm]
\Ss^{\sbullet}(M): & M \ar[r, "d^0"] & M^3 \ar[r, "d^1"]  & M^4 \ar[r, "d^2"]  & M^4 \ar[r, "d^3"]  & \cdots, \\
\TT_{\sbullet}(M):  & M  & M^3 \ar[l, "\pl_1"']  & M^4 \ar[l, "\pl_2"']  & M^4 \ar[l, "\pl_3"']  & \cdots, \ar[l, "\pl_4"']
\end {tikzcd}
\]
respectively. We mention that the differentials $d^i$ of $\Ss^{\sbullet}(M)$ share the same matrices form with $d_{i+1}$, and cochains of $\Ss^{\sbullet}(M)$ should be written as column vectors; the differentials $\pl_i$ of $\TT_{\sbullet}(M)$ are induced by $d_i$, by applying $\nu\ot 1$ to all entries, and chains of $\TT_{\sbullet}(M)$ are expressed as row vectors.

Next we will establish the Van den Bergh duality explicitly. For the formal consistency,  let us adapt the chain complex $\TT_{\sbullet}(M)$ to the cochain frame, namely, $\TT^{-i}(M)=\TT_{i}(M)$ and $\pl^{-i}=\pl_i$. Accordingly, cochains of $\TT^{\sbullet}(M)$ are written as column vectors. The matrices forms of $\pl^{-i}$ are listed as follows:
\begin{align*}
\pl^{-1}&=
\begin{bmatrix} q\ot x-x\ot 1 & q^{-1}\ot y-y\ot 1 & 1\ot z-z\ot 1  \end{bmatrix},  \\
\pl^{-2}&=
\begin{bmatrix}
q^{-1}\ot y & y\ot 1 & 1\ot \wt{z}-z\ot 1 & 0 \\
x\ot 1 & q\ot x & 0 & 1\ot z-\wt{z}\ot 1 \\
-p_1\ot p_2 & -q\wt{p_1}\ot\wt{p_2}& x\ot q-q\ot x & y\ot 1-1\ot y
\end{bmatrix},
\\
\pl^{-3}&=
\begin{bmatrix}
q\ot x & -y\ot 1 & 1\ot z-z\ot 1 & 0 \\
-x\ot 1 & q^{-1}\ot y & 0 & 1\ot \wt{z}-\wt{z}\ot 1 \\
-p_1\ot\wt{p_2} & 0 & -q^{-1}\ot y & -y\ot 1 \\
0 & -\wt{p_1}\ot p_2 & -x\ot 1 & -q\ot x
\end{bmatrix},
\\
\pl^{-4}&=
\begin{bmatrix}
q^{-1}\ot y & y\ot 1 & 1\ot \wt{z}-z\ot 1 & 0 \\
x\ot 1 & q\ot x & 0 & 1\ot z-\wt{z}\ot 1 \\
-p_1\ot p_2 & 0 & -q\ot x & y\ot 1 \\
0 & -\wt{p_1}\ot \wt{p_2} & x\ot 1 & -q^{-1}\ot y
\end{bmatrix}.
\end{align*}

With these preparations, we can construct cochain maps between $\Ss^{\sbullet}(M)$ and $\TT^{\sbullet}(M)[-2]$. To this end, let us first construct those between $\Ss^{\sbullet}(A^e)$ and $\TT^{\sbullet}(A^e)[-2]$. 

\begin{lemma}\label{lem:f-g-commutative}
	In the diagram
	\[
	\begin {tikzcd}
	& 0 \ar[r] & A^e \ar[r, "d^0"]\ar[d, "g^0", shift left=1] & (A^e)^3 \ar[r, "d^1"]\ar[d, "g^1", shift left=1]  & (A^e)^4 \ar[r, "d^2"]\ar[d, "g^2", shift left=1]  & (A^e)^4 \ar[r]  & \cdots \\
	\cdots \ar[r] & (A^e)^4 \ar[r, "\pl_{[-2]}^{-1}"] & (A^e)^4 \ar[r, "\pl_{[-2]}^{0}"]\ar[u, "f^0", shift left=1] & (A^e)^3 \ar[r, "\pl_{[-2]}^{1}"]\ar[u, "f^1", shift left=1]  & A^e \ar[r]\ar[u, "f^2", shift left=1]  & 0  
	\end {tikzcd}
	\]
	the two horizontal sequences denote $\Ss^{\sbullet}(A^e)$ and $\TT^{\sbullet}(A^e)[-2]$ respectively. Define the vertical maps by
	\begin{align*}
	g^0&=\begin{bmatrix}
	1\ot \be\\
	-q^{-1}\ot \wt{\be} \\
	q^{-1}\ot \al y \\
	0
	\end{bmatrix},
	\\
	g^1&=
	\begin{bmatrix}
	0 & -q^{-1}\ot \wt{\be} & -q^{-1}\ot \al y \\
	1\ot \be & 0 & 0 \\
	1\ot \al y & 0 & q\wt{p_1}\ot \wt{\be p_2'}-p_1 \ot \be p_2'
	\end{bmatrix},
	\\
	g^2 & =	\begin{bmatrix}	-1\ot \be & q^{-1}\ot \wt{\be} & -q^{-1}\ot \al y & 0	\end{bmatrix},
	\\
	f^0&=	\begin{bmatrix}	p_1\ot p_2 & 0 & q\ot x & -y\ot 1	\end{bmatrix}, \\
	f^1 &=
	\begin{bmatrix}
	0 & \wt{p_1}\ot p_2 & 1\ot x \\
	-qp_1\ot \wt{p_2} & 0 & -y\ot 1 \\
	-q\ot x & y\ot 1 & 0
	\end{bmatrix},
	\\
	f^2&=
	\begin{bmatrix}
	0 \\
	q\wt{p_1}\ot\wt{p_2} \\
	-q\otimes x \\
	y\otimes q
	\end{bmatrix},
	\end{align*}
	then $g$ and $f$ are both cochain maps of right $A^e$-modules.
\end{lemma}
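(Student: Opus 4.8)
The content of the lemma is purely that $g$ and $f$ commute with the differentials: each $g^i$ and $f^i$ is given by left multiplication by an explicit matrix over $A^e$ on column vectors, and left multiplication automatically commutes with the right $A^e$-action on a free module, so the right $A^e$-linearity is free. Understanding $g^i=f^i=0$ for $i\notin\{0,1,2\}$, and using that $\Ss^{\sbullet}(A^e)$ lives in cohomological degrees $\ge 0$ while $\TT^{\sbullet}(A^e)[-2]$ lives in degrees $\le 2$, the cochain-map conditions reduce to the six matrix identities over $A^e$
\begin{gather*}
\pl^{-2}g^0=g^1 d_1,\qquad \pl^{-1}g^1=g^2 d_2, \\
f^0\pl^{-3}=0,\qquad d_1 f^0=f^1\pl^{-2},\qquad d_2 f^1=f^2\pl^{-1},\qquad d_3 f^2=0,
\end{gather*}
the first two saying that $g$ is a cochain map and the last four that $f$ is one, with $f^0\pl^{-3}=0$ and $d_3 f^2=0$ the boundary squares forced by the supports. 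So the plan is to check these six identities by direct computation in $A^e=A\ot A\op$, expanding each one column by column.

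For the computations I would use the following small toolkit: the commutation relations $x\vphi=\wt\vphi x$, $\vphi y=y\wt\vphi$ for $\vphi\in\kk[z]$, together with $yx=p$, $xy=\wt p$ and $\wt{z^i}=q^i z^i$; the telescoping identities
\[
(z\ot 1-1\ot z)\,\mathfrak{d}(\vphi)=\vphi\ot 1-1\ot\vphi,\qquad (\wt z\ot 1-1\ot\wt z)\bigl(\wt{\vphi_1}\ot\wt{\vphi_2}\bigr)=\wt\vphi\ot 1-1\ot\wt\vphi
\]
(the second obtained from the first by applying $\si\ot\si$); the co-Leibniz rule $\mathfrak{d}(\vphi\psi)=(\vphi\ot 1)\mathfrak{d}(\psi)+\mathfrak{d}(\vphi)(1\ot\psi)$ and the fact that multiplying the two tensor legs of $\mathfrak{d}(\vphi)$ gives back $\vphi'$; and the B\'ezout relation $\al p+\be p'=1$ fixed at the start of this section. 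In most matrix entries the two $A^e$-summands cancel immediately once $x$ or $y$ is slid past a polynomial in $z$ (for example, in $d_3 f^2=0$ three of the four rows die by the commutation relations alone); the entries that do not cancel on the nose are precisely those carrying the $\mathfrak{d}$-terms $p_1\ot p_2$, $\wt{p_1}\ot p_2$, $p_1\ot\wt{p_2}$, $\wt{p_1}\ot\wt{p_2}$ and the correction term $q\wt{p_1}\ot\wt{\be p_2'}-p_1\ot\be p_2'$ appearing in $g^1$, and it is exactly there that the telescoping identities and $\al p+\be p'=1$ are invoked to make the square close.

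I expect the only real obstacle to be bookkeeping rather than ideas: one must consistently keep track of the reversed multiplication in the second ($A\op$) tensor factor, the automorphism twist $\wt{(\cdot)}=\si(\cdot)$ introduced each time $x$ or $y$ crosses an element of $\kk[z]$, the various powers of $q$, and the signs. The matrices $g^0$, $g^1$, $f^1$, $f^2$ were presumably obtained by solving exactly these equations, so their off-diagonal entries are shaped to produce the needed cancellations, and there is no real shortcut around the verification. I would organize the write-up by disposing of the two $g$-squares first and the four $f$-squares afterwards, in each case working one matrix column at a time and recording the handful of resulting identities in $A^e$.
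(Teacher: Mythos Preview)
Your plan is correct and matches the paper's approach: the paper also reduces to checking commutativity of each square by direct matrix computation in $A^e$, explicitly writing out only $g^1 d^0=\pl_{[-2]}^{0} g^0$ and $f^1\pl_{[-2]}^{0}=d^0 f^0$ and declaring the rest ``similar.'' Your identification of the two extra boundary identities $f^0\pl^{-3}=0$ and $d_3 f^2=0$ (which the paper leaves implicit) and your toolkit of telescoping/co-Leibniz identities together with $\al p+\be p'=1$ are exactly what is used.
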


\begin{proof}
	It is sufficient to check the commutativity of each square. Here we only prove $g^1d^0=\pl_{[-2]}^{0} g^0$ and $f^1\pl_{[-2]}^{0}=d^0 f^0$, since other equations can be proved similarly.
	
	First of all, by \eqref{eq:nc-der}, we have
	\[
	\biggl(\biggl(1\ot \frac{\mathrm{d}}{\mathrm{d} z}\biggr) \mathfrak{d}(z^i) \biggr)(z\ot 1-1\ot z)=\mathfrak{d}(z^i)-1\ot (z^i)',
	\]
	and thus
	\begin{align*}
	(p_1\ot p_2')(z\ot 1-1\ot z)&=p_1\ot p_2-1\ot p', \\
	(q\wt{p_1}\ot \wt{p_2'})(z\ot 1-1\ot z)&=\wt{p_1}\ot \wt{p_2}-1\ot \wt{p'}.
	\end{align*}
	
	Next we prove $g^1d^0=\pl_{[-2]}^{0} g^0$. By the two equations above as well as $\al p+\be p'=1$, we have
	\begin{align*}
	g^1 d^0& =
	\begin{bmatrix}
	0 & -q^{-1}\ot \wt{\be} & -q^{-1}\ot \al y \\
	1\ot \be & 0 & 0 \\
	1\ot \al y & 0 & q\wt{p_1}\ot \wt{\be p_2'}-p_1\ot \be p_2'
	\end{bmatrix}
	\begin{bmatrix}
	x\ot 1-1\ot x \\
	y\ot 1-1\ot y \\
	z\ot 1-1\ot z
	\end{bmatrix}
	\\
	&=
	\begin{bmatrix}
	-q^{-1}y\ot \wt{\be} +q^{-1}\ot y\wt{\be} -q^{-1}z\ot \al y+q^{-1}\ot z\al y \\
	x\ot\be-1\ot x\be \\
	x\ot \al y-1\ot x\al y-(p_1 \ot \be p_2')(z\ot 1-1\ot z)+\!(q\wt{p_1}\ot \wt{\be p_2'})(z\ot 1-1\ot z) 
	\end{bmatrix}
	\\
	&=
	\begin{bmatrix}
	-q^{-1}y\ot \wt{\be} +q^{-1}\ot y\wt{\be} -q^{-1}z\ot \al y+q^{-1}\ot z\al y \\
	x\ot\be-1 \ot x\be \\
	x\ot \al y-1\ot x\al y-(1\ot\be)(p_1 \ot  p_2-1\ot p')+(1\ot\wt{\be})(\wt{p_1}\ot \wt{p_2}-1\ot\wt{p'})
	\end{bmatrix}
	\\
	&=
	\begin{bmatrix}
	-q^{-1}y\ot \wt{\be} +q^{-1}\ot y\wt{\be} -q^{-1}z\ot \al y+q^{-1}\ot z\al y \\
	x\ot\be-1\ot x\be \\
	x\ot \al y-1\ot \wt{\al p}-p_1 \ot  \be p_2+1\ot \be p'+\wt{p_1}\ot \wt{\be p_2}-1 \ot\wt{\be p'}
	\end{bmatrix}
	\\
	&=
	\begin{bmatrix}
	-q^{-1}y\ot \wt{\be} +q^{-1}\ot y\wt{\be} -q^{-1}z\ot \al y+q^{-1}\ot z\al y \\
	x\ot\be-1\ot x\be \\
	x\ot \al y-p_1 \ot  \be p_2-1\ot \al p+\wt{p_1}\ot \wt{\be p_2}
	\end{bmatrix},
	\\
	\pl_{[-2]}^{0} g^0 &=
	\begin{bmatrix}
	q^{-1}\ot  y & y\ot 1 & 1\ot \wt{z}-z\ot 1 & 0 \\
	x\ot 1 & q\ot x & 0 & 1\ot z-\wt{z}\ot 1 \\
	-p_1\ot p_2 & -q\wt{p_1}\ot\wt{p_2} & x\ot q-q\ot x & y\ot 1-1\ot y
	\end{bmatrix}
	\begin{bmatrix}
	1\ot \be\\
	-q^{-1}\ot \wt{\be} \\
	q^{-1}\ot \al y \\
	0
	\end{bmatrix}
	\\
	&=
	\begin{bmatrix}
	q^{-1}\ot \be y-q^{-1}y\ot \wt{\be}-q^{-1}z\ot \al y+q^{-1}\ot z\al y \\
	x\ot\be-1\ot \wt{\be}x \\
	-p_1\ot \be p_2+\wt{p_1}\ot \wt{\be p_2}+x\ot \al y-1\ot \al yx
	\end{bmatrix}
	\\
	&=g^1 d^0.
	\end{align*}
	
	The final step is to show $f^1\pl_{[-2]}^{0}=d^0 f^0$. In fact,
	\begin{align*}
	f^1\pl_{[-2]}^{0}& =\begin{bmatrix}
	0 & \wt{p_1}\ot p_2 & 1\ot x \\
	-qp_1\ot \wt{p_2} & 0 & -y\ot 1 \\
	-q\ot x & y\ot 1 & 0 \\
	\end{bmatrix}
	\\
	&\varphantom{=}{}\cdot\begin{bmatrix}
	q^{-1}\ot y & y\ot 1 & 1\ot \wt{z}-z\ot 1 & 0 \\
	x\ot 1 & q\ot x & 0 & 1\ot z-\wt{z}\ot 1 \\
	-p_1\ot p_2 & -q\wt{p_1}\ot\wt{p_2}& x\ot q-q\ot x & y\ot 1-1\ot y
	\end{bmatrix}
	\\
	&=
	\begin{bmatrix}
	\wt{p_1}x \ot p_2 -p_1\ot p_2x & 0 & -q\ot x^2+qx\ot x & -\wt{p}\ot 1+y\ot x \\
	-p_1\ot y\wt{p_2}+yp_1 \ot p_2 & 0 & -q\ot \wt{p}+qy\ot x & -y^2\ot 1+y\ot y \\
	-1\ot yx+yx\ot 1 & 0 & -q\ot \wt{z}x+qz\ot x & y\ot z-y\wt{z}\ot 1
	\end{bmatrix},
	\\
	d^0 f^0 &=
	\begin{bmatrix}
	x\ot 1-1\ot x \\
	y\ot 1-1\ot y \\
	z\ot 1-1\ot z
	\end{bmatrix}
	\begin{bmatrix}
	p_1\ot p_2 & 0, q\ot x & -y\ot 1
	\end{bmatrix}
	\\
	&=
	\begin{bmatrix}
	xp_1\ot p_2-p_1\ot p_2x & 0 & qx\ot x-q\ot x^2 & -xy\ot 1+y\ot x \\
	yp_1\ot p_2-p_1\ot p_2y & 0 & qy\ot x-q\ot xy & -y^2\ot 1+y\ot y \\
	-1\ot p+p\ot 1 & 0 & qz\ot x-q\ot xz & -zy\ot 1+y\ot z
	\end{bmatrix}
	\\
	&=f^1\pl_{[-2]}^{0}. \qedhere
	\end{align*}
\end{proof}

\begin{proposition}\label{prop:quasi-inverse}
	In Lemma \ref{lem:f-g-commutative}, $f$ is quasi-inverse to $g$. Hence $f$ and $g$ are both invertible as morphisms of the homotopy category $\mathbf{K}(A^e)$.
\end{proposition}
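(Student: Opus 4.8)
The plan is to establish the two homotopy equivalences $fg\simeq 1_{\Ss^\sbullet(A^e)}$ and $gf\simeq 1_{\TT^\sbullet(A^e)[-2]}$ in the homotopy category $\mathbf{K}(A^e)$ of complexes of right $A^e$-modules, by writing down explicit contracting homotopies $s$ and $t$. Once this is done, $[f]$ and $[g]$ are mutually inverse in $\mathbf{K}(A^e)$, so they are homotopy equivalences and, in particular, quasi-isomorphisms.

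First I would note that $f^j$ and $g^j$ vanish unless $j\in\{0,1,2\}$, because $\TT^\sbullet(A^e)[-2]$ is concentrated in degrees $\le 2$ and $\Ss^\sbullet(A^e)$ in degrees $\ge 0$. Hence $(fg)^j=f^jg^j$ and $(gf)^j=g^jf^j$ for $j\in\{0,1,2\}$ and both composites vanish in all other degrees; in particular neither equals the identity, so the discrepancy must be absorbed by a homotopy. Thus the first computation is the six products $f^0g^0,f^1g^1,f^2g^2$ and $g^0f^0,g^1f^1,g^2f^2$, obtained by multiplying the matrices of Lemma \ref{lem:f-g-commutative} over $A^e$, using $x\vphi=\wt\vphi x$, $\vphi y=y\wt\vphi$, $\al p+\be p'=1$, and the identities $(p_1\ot p_2')(z\ot 1-1\ot z)=p_1\ot p_2-1\ot p'$ and $(q\wt{p_1}\ot\wt{p_2'})(z\ot 1-1\ot z)=\wt{p_1}\ot\wt{p_2}-1\ot\wt{p'}$ recorded in the proof of that lemma.

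Next I would construct the homotopies. For the first equivalence I would produce right $A^e$-module maps $s^j\colon\Ss^j(A^e)\to\Ss^{j-1}(A^e)$, each an explicit matrix over $A^e$ with $s^0=0$, satisfying $d^{j-1}s^j+s^{j+1}d^j=1-f^jg^j$ for all $j\ge 0$, where $f^jg^j$ is taken to be $0$ once $j\ge 3$. Since the matrix of $d^j$ equals that of $d_{j+1}$ and $d_i=d_{i-2}$ for $i\ge 5$, the complex $\Ss^\sbullet(A^e)$ is $2$-periodic from degree $2$ onwards, so one expects the $s^j$ to become $2$-periodic as well; then for $j$ large the relation collapses to a single matrix identity, and only the few relations in low degrees $0,1,2,3$ need to be checked by hand. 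Symmetrically, for the second equivalence I would produce $t^j\colon\TT^j(A^e)[-2]\to\TT^{j-1}(A^e)[-2]$ with $\pl_{[-2]}^{j-1}t^j+t^{j+1}\pl_{[-2]}^{j}=1-g^jf^j$ for all $j\le 2$, again invoking the $2$-periodicity of $\pl_{[-2]}$ in degrees $\le -1$ to reduce to finitely many checks near degrees $0,1,2$. As a cross-check one may note that $H^*(\Ss^\sbullet(A^e))=\Ext^*_{A^e}(A,A^e)$ and $H^*(\TT^\sbullet(A^e)[-2])$ are both concentrated in degree $2$, so $g$ is a quasi-isomorphism once it is an isomorphism there; since $\Ss^\sbullet(A^e)$ is a bounded-below complex of free modules, hence K-projective, this would yield $fg\simeq 1$ from $gf\simeq 1$ alone. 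I would nevertheless carry out the construction of both $s$ and $t$ explicitly, as applying the additive functor $-\ot_{A^e}M$ to them gives the corresponding homotopies for an arbitrary bimodule $M$.

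The main obstacle is the explicit choice of $s$ and $t$: the only genuinely creative step is guessing the repeating matrix block for the periodic part together with the low-degree correction terms (for $\Ss$, essentially $s^1$ and $s^2$) so that all the boundary relations at degrees $0,1,2$ hold simultaneously; the rest is elementary but voluminous matrix algebra over $A^e$, which I would present as in Lemma \ref{lem:f-g-commutative}, working out one or two representative squares and leaving the remaining analogous ones to the reader.
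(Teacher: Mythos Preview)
Your proposal is correct and follows essentially the same route as the paper: it too constructs explicit homotopies $s$ and $t$ as matrices over $A^e$, exploits the $2$-periodicity of the differentials so that $s^i=s^{i-2}$ for $i\ge 5$ and $t^i=t^{i+2}$ for $i\le -2$, and then verifies the homotopy relations by direct matrix computation in the finitely many low degrees. The only cosmetic difference is a sign convention (the paper writes $fg-1=ds+sd$ rather than $1-fg=ds+sd$), which is immaterial.
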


\begin{proof}
	It suffices to construct homotopy maps $s\colon fg \Rightarrow 1_{\Ss^{\sbullet}(A^e)}$ and $t\colon gf \Rightarrow 1_{\TT^{\sbullet}(A^e)[-2]}$.	Define $s$ as follows:
	\[
	\begin {tikzcd}[row sep=10mm, column sep=12mm]
	A^e \ar[r, "d^0"]\ar[d, "f^0g^0"', shift right=1]\ar[d, "1", shift left=1] & (A^e)^3 \ar[r, "d^1"]\ar[d, "f^1g^1"', shift right=1]\ar[d, "1", shift left=1]\ar[ld, "s^1"']  & (A^e)^4 \ar[r, "d^2"]\ar[d, "f^2g^2"', shift right=1]\ar[d, "1", shift left=1]\ar[ld, "s^2"']  & (A^e)^4 \ar[r, "d^3"]\ar[d, "0"', shift right=1]\ar[d, "1", shift left=1]\ar[ld, "s^3"'] & (A^e)^4 \ar[r, "d^4"]\ar[d, "0"', shift right=1]\ar[d, "1", shift left=1]\ar[ld, "s^4"']   & \cdots \\
	A^e \ar[r, "d^0"] & (A^e)^3 \ar[r, "d^1"]  & (A^e)^4 \ar[r, "d^2"]  & (A^e)^4 \ar[r, "d^3"]  & (A^e)^4 \ar[r, "d^4"]  & \cdots
	\end {tikzcd}
	\]
	where
	\begin{align*}
	s^1&=	\begin{bmatrix}	0 & 0 & p_1\ot \be p_2'	\end{bmatrix}, 
	\\
	s^2&=
	\begin{bmatrix}
	0 & 0 & \wt{p_1}\ot \be p_2' & 0 \\
	-1\ot \al y & 0 & 0 & p_1\ot \wt{\be p_2'} \\
	1\ot \be & 0 & 0 & 0
	\end{bmatrix},\\
	s^3&=
	\begin{bmatrix}
	0 & 0 & p_1\ot \be p_2' & 0 \\
	1\ot \al y & 0 & 0 & \wt{p_1}\ot \wt{\be p_2'} \\
	1\ot \be & 0 & 0 & 0 \\
	0 & 1\ot \wt{\be} & 1\ot \al y & 0
	\end{bmatrix},
	\\
	s^4&=
	\begin{bmatrix}
	0 & 0 & \wt{p_1}\ot \be p_2' & 0 \\
	-1\ot \al y & 0 & 0 & p_1\ot \wt{\be p_2'} \\
	1\ot \be & 0 & 0 & 0 \\
	0 & 1\ot \wt{\be} & -1\ot \al y & 0
	\end{bmatrix},
	\end{align*}
	and as before, $s^i=s^{i-2}$ for all $i\geq 5$.
	
	By the operations of matrices, we have
	\begin{align*}
	f^0g^0&=p_1\ot \be p_2+1\ot \al p \\
	&=1_M+s^1d^0, \\
	f^1g^1&=
	\begin{bmatrix}
	\wt{p_1}\ot \be p_2+1\ot \al p & 0 & q\wt{p_1}\ot x\be p_2'-p_1\ot \be p_2'x \\
	-y\ot \al y &  p_1\ot \wt{\be p_2} &  p_1\ot \al p_2y+yp_1\ot \be p_2'-qp_1y\ot \wt{\be p_2'} \\
	y\ot \be & 1\ot x\be & 1\ot \al p
	\end{bmatrix}
	\\
	&=1_{M^3}+s^2d^1+d^0s^1, \\
	f^2g^2&=
	\begin{bmatrix}
	0 & 0 & 0 & 0 \\
	-q\wt{p_1}\ot \be \wt{p_2} & \wt{p_1}\ot \wt{\be p_2} & -\wt{p_1}\ot \al p_2 y & 0 \\
	q\ot \be x & -1\ot x\be & 1\ot \al p & 0 \\
	-y\ot q\be & y\ot \wt{\be} & -y\ot \al y & 0 
	\end{bmatrix}
	\\
	&=1_{M^4}+s^3d^2+d^1s^2.
	\end{align*}
	Furthermore, $1_{M^4}+s^{i+1}d^{i}+d^{i-1}s^{i}=0$ for all $i\geq 3$. We thus conclude that $fg$ is homotopic to $1_{\Ss^{\sbullet}(A^e)}$.
	
	Next, define $t$ as follows:
	\[
	\begin {tikzcd}[row sep=10mm, column sep=12mm]
	\cdots \ar[r] & (A^e)^4 \ar[r, "\pl_{[-2]}^{-1}"] & (A^e)^4 \ar[r, "\pl_{[-2]}^{-1}"] & (A^e)^4 \ar[r, "\pl_{[-2]}^{0}"] & (A^e)^3 \ar[r, "\pl_{[-2]}^{1}"]  & A^e    \\
	\cdots \ar[r] & (A^e)^4 \ar[r, "\pl_{[-2]}^{-1}"']\ar[u, "0", shift left=1]\ar[u, "1"', shift right=1] & (A^e)^4 \ar[r, "\pl_{[-2]}^{-1}"']\ar[u, "0", shift left=1]\ar[u, "1"', shift right=1]\ar[lu, "t^{-1}"] & (A^e)^4 \ar[r, "\pl_{[-2]}^{0}"']\ar[u, "g^0f^0", shift left=1]\ar[u, "1"', shift right=1]\ar[lu, "t^0"] & (A^e)^3 \ar[r, "\pl_{[-2]}^{1}"']\ar[u, "g^1f^1", shift left=1]\ar[u, "1"', shift right=1]\ar[lu, "t^1"]  & (A^e) \ar[u, "g^2f^2", shift left=1]\ar[u, "1"', shift right=1]\ar[lu, "t^2"]
	\end {tikzcd}
	\]
	where
	\begin{align*}
	t^2&=\begin{bmatrix}
	0 \\
	0 \\
	-q\wt{p_1}\ot \wt{\be p_2'}
	\end{bmatrix},
	\\
	t^1&=
	\begin{bmatrix}
	0 & 0 & 0 \\
	0 & -q^{-1}\ot \al y & q^{-1}\ot \wt{\be} \\
	-p_1\ot \wt{\be p_2'} & 0 & 0 \\
	0 & -\wt{p_1}\ot \be p_2' & 0
	\end{bmatrix},
	\\
	t^0&=
	\begin{bmatrix}
	-q^{-1}\ot \al y & 0 & 1\ot \wt{\be} & 0 \\
	0 & 0 & 0 & 1\ot \be \\
	-p_1\ot \be p_2' & 0 & 0 & 0 \\
	0 & -\wt{p_1}\ot \wt{\be p_2'} & 0 & q^{-1}\ot \al y
	\end{bmatrix},
	\\
	t^{-1}&=
	\begin{bmatrix}
	0 & 0 & 1\ot \be & 0 \\
	0 & -q^{-1}\ot \al y & 0 & 1\ot \wt{\be} \\
	-p_1\ot \wt{\be p_2'} & 0 & q^{-1}\ot \al y & 0 \\
	0 & -\wt{p_1}\ot \be p_2' & 0 & 0
	\end{bmatrix},
	\end{align*}
	and $t^i=t^{i+2}$ for all $i\leq -2$.
	
	The following procedure is almost a copy of what we just did for $s$. We can directly verify that
	\begin{align*}
	g^2f^2&=\wt{p_1}\ot \wt{p_2\be}+1\ot \wt{\al p} \\
	&=1_M+\pl_{[-2]}^{1}t^2,
	\\
	g^1f^1&=
	\begin{bmatrix}
	p_1\ot \wt{\be p_2}+1\ot \wt{\al p} & -q^{-1}y\ot \al y & \!\! q^{-1}y\ot \wt{\be} \\
	0 & \wt{p_1}\ot p_2\be  & 1\ot x\be \\
	qp_1\ot x\be p_2'-q^2\wt{p_1}\ot x\wt{\be p_2'} & \wt{p_1}\ot p_2\al y+q\wt{p_1}y\ot \wt{\be p_2'}-p_1y\ot \be p_2' & 1\ot \wt{\al p} 
	\end{bmatrix}
	\\
	&=1_{M^3}+t^2\pl_{[-2]}^{1}+\pl_{[-2]}^{0}t^1,
	\\
	g^0f^0&=
	\begin{bmatrix}
	p_1\ot p_2\be & 0 & q\ot x\be & -y\ot \be \\
	-q^{-1}p_1\ot p_2\wt{\be} & 0 & -1\ot x\wt{\be} & q^{-1}y\ot \wt{\be} \\
	q^{-1}p_1\ot p_2\al y & 0 & 1\ot \wt{\al p} & -q^{-1}y\ot \al y \\
	0 & 0 & 0 & 0
	\end{bmatrix}
	\\
	&=1_{M^4}+t^1\pl_{[-2]}^{0}+\pl_{[-2]}^{-1}t^0,
	\end{align*}
	and $1_{M^4}+t^{i+1}\pl_{[-2]}^{i}+\pl_{[-2]}^{i-1}t^{i}=0$ for all $i\leq -1$. Consequently, $gf$ is homotopic to $1_{\TT^{\sbullet}(A^e)[-2]}$.
\end{proof}

Next let us establish quasi-isomorphisms between $\Ss^{\sbullet}(M)$ and $\TT^{\sbullet}(M)[-2]$, according to what we did for $M=A^e$. Notice that $\Ss^{\sbullet}(M)\cong \Ss^{\sbullet}(A^e)\ot_{A^e}M$ and $\TT^{\sbullet}(M)[-2]\cong \TT^{\sbullet}(A^e)[-2]\ot_{A^e} M$. If we apply the functor $-\ot_{A^e}M$ to the diagram given in Lemma \ref{lem:f-g-commutative}, then we immediately obtain the corresponding cochain maps which are still denoted by $f$ and $g$. Since $-\ot_{A^e}M$ preserves homotopy, both $f$ and $g$ are quasi-isomorphisms by Proposition \ref{prop:quasi-inverse}. Therefore, we have

\begin{theorem}\label{thm:quasi-inverse}
	There are quasi-isomorphisms $f \colon \TT^{\sbullet}(M)[-2] \to \Ss^{\sbullet}(M)$ and $g\colon \Ss^{\sbullet}(M)\to \TT^{\sbullet}(M)[-2]$ that fit the following commutative diagram
	\[
	\begin {tikzcd}
	& 0 \ar[r] & M \ar[r, "d^0"]\ar[d, "g^0", shift left=1] & M^3 \ar[r, "d^1"]\ar[d, "g^1", shift left=1]  & M^4 \ar[r, "d^2"]\ar[d, "g^2", shift left=1]  & M^4 \ar[r]  & \cdots \\
	\cdots \ar[r] & M^4 \ar[r, "\pl_{[-2]}^{-1}"] & M^4 \ar[r, "\pl_{[-2]}^{0}"]\ar[u, "f^0", shift left=1] & M^3 \ar[r, "\pl_{[-2]}^{1}"]\ar[u, "f^1", shift left=1]  & M \ar[r]\ar[u, "f^2", shift left=1]  & 0  
	\end {tikzcd}
	\]
	for all $A$-bimodules $M$. Furthermore, $f$ and $g$ are mutually quasi-inverse.
\end{theorem}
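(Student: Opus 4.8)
\emph{Proof proposal.} The genuine content is already in place: Lemma~\ref{lem:f-g-commutative} and Proposition~\ref{prop:quasi-inverse} settle the universal case $M=A^e$, exhibiting $f$ and $g$ as mutually homotopy-inverse cochain maps of complexes of right $A^e$-modules, with explicit homotopies $s$ and $t$. The plan is therefore to derive the general statement from this special case by base change along the functor $-\ot_{A^e}M$. First I would make precise the identifications $\Ss^{\sbullet}(M)\cong \Ss^{\sbullet}(A^e)\ot_{A^e}M$ and $\TT^{\sbullet}(M)[-2]\cong \TT^{\sbullet}(A^e)[-2]\ot_{A^e}M$ as complexes of $\kk$-vector spaces: degreewise these amount to $\Hom_{A^e}((A^e)^k,M)\cong M^k\cong (A^e)^k\ot_{A^e}M$ and $M^\nu\ot_{A^e}(A^e)^k\cong (M^\nu)^k$, and one checks that under them the differentials $d^i$ and $\pl^{-i}$ go over to the differentials of the tensored complexes, because the differentials of $\FF$ are given by left multiplication by matrices over $A^e$ and hence commute with the right $A^e$-action used to form the tensor product.

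Next I would observe that the same matrix description makes $f$, $g$ (Lemma~\ref{lem:f-g-commutative}) and the homotopies $s$, $t$ (Proposition~\ref{prop:quasi-inverse}) into morphisms of complexes of \emph{right} $A^e$-modules, so that $-\ot_{A^e}M$ may legitimately be applied to them. Being additive, $-\ot_{A^e}M$ sends cochain maps to cochain maps, chain homotopies to chain homotopies, and identities to identities, and it preserves commutative diagrams. Applying it to the diagram of Lemma~\ref{lem:f-g-commutative} yields cochain maps $f\ot_{A^e}1_M$, $g\ot_{A^e}1_M$ (still written $f$, $g$) fitting the asserted commutative square; applying it to $s$, $t$ yields homotopies $fg\Rightarrow 1_{\Ss^{\sbullet}(M)}$ and $gf\Rightarrow 1_{\TT^{\sbullet}(M)[-2]}$. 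Hence $f$ and $g$ are mutually homotopy-inverse; since homotopic maps induce the same map on cohomology, $H^{\sbullet}(f)$ and $H^{\sbullet}(g)$ are mutually inverse isomorphisms, and in particular $f$ and $g$ are quasi-isomorphisms.

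There is essentially no obstacle here: the theorem is a formal consequence of the two preceding results once the case $M=A^e$ is known. The only point requiring a moment's care is the bookkeeping of the previous paragraph---verifying that every map in sight is right $A^e$-linear and that the identifications $\Ss^{\sbullet}(M)\cong\Ss^{\sbullet}(A^e)\ot_{A^e}M$ and $\TT^{\sbullet}(M)[-2]\cong\TT^{\sbullet}(A^e)[-2]\ot_{A^e}M$ are isomorphisms of complexes---but both are immediate from the matrix form of the differentials and of $f,g,s,t$, and it is precisely this that legitimizes the base-change step.
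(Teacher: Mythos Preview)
Your proposal is correct and follows essentially the same argument as the paper: reduce to the case $M=A^e$ handled by Lemma~\ref{lem:f-g-commutative} and Proposition~\ref{prop:quasi-inverse}, then base-change along $-\ot_{A^e}M$ using the identifications $\Ss^{\sbullet}(M)\cong\Ss^{\sbullet}(A^e)\ot_{A^e}M$ and $\TT^{\sbullet}(M)[-2]\cong\TT^{\sbullet}(A^e)[-2]\ot_{A^e}M$ together with the fact that this additive functor preserves homotopies. Your write-up is more explicit about the bookkeeping (right $A^e$-linearity of all maps involved, compatibility of the identifications with differentials), but the strategy is identical.
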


Theorem \ref{thm:quasi-inverse} in fact enables us to establish the Van den Bergh duality at the level of  complex.

\begin{remark}\label{rmk:g-independence}
	In Theorem \ref{thm:quasi-inverse}, $g$ depends upon the choice of $\al$ and $\be$. However, the isomorphisms of cohomological groups induced by $g$ are independent of $\al$ or $\be$, since $H^*(g)$ is the inverse of $H^*(f)$, and the latter is independent of $\al$ or $\be$.
\end{remark}

\section{Cohomology of periodical complexes}\label{sec:bv-structure}

In this section, $A$ still denotes a skew Calabi--Yau generalized Weyl algebra of quantum type. In addition,  assume $q$ is generic, i.e., $q$ is not a root of unity. We write
\[
p=\sum_{i=0}^{n}a_iz^{n-i}, \qquad a_0\neq 0,
\]
and $\ell=\min\{ j \,|\, ja_j\neq 0 \}$, $\Ss=(\Ss^{\sbullet}(A), d)$, $\TT=(\TT_{\sbullet}(A), \pl)$. For two polynomials $\vphi$ and $\psi\in\kk[z]$, we use $\vphi\sim\psi$ to indicate that there is a nonzero $c\in\kk$ such that $\vphi=c\psi$.

In \cite{Solotar:Hochschild-homology-GWA-quantum}, by applying the spectral sequence argument, the authors succeeded in computing the dimensions of $H^*(\Ss)$. 

\begin{theorem}[{\cite[Thm 1.1]{Solotar:Hochschild-homology-GWA-quantum}}]\label{thm:Per-cohomo-1}
	The nontrivial cohomological groups are $H^0(\Ss)$, $H^1(\Ss)$ and $H^2(\Ss)$. More precisely,
	\begin{enumerate}
		\item if $p\nsim z$, then $\dim H^0(\Ss)=1$, $\dim H^1(\Ss)=1$,  $\dim H^2(\Ss)=n$;
		\item if $p\sim z$, then $\dim H^0(\Ss)=1$,  $\dim H^1(\Ss)=2$, $\dim H^2(\Ss)=2$.
	\end{enumerate}
\end{theorem}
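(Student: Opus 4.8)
The plan is to identify $\Ss = \Hom_{A^e}(\FF, A)$ as a complex computing $\HH^*(A) = \Ext^*_{A^e}(A, A)$, so that $H^i(\Ss) = \HH^i(A)$, and to dispose of the three nonzero groups one at a time, with the higher vanishing coming for free. Since $A$ is $\nu$-skew Calabi--Yau of dimension $2$ it is homologically smooth, hence admits a finite finitely generated projective $A^e$-resolution; therefore $\mathrm{pd}_{A^e}A = \sup\{i : \Ext^i_{A^e}(A, A^e)\neq 0\} = 2$, and $\HH^i(A) = 0$ for all $i\geq 3$. (Alternatively: $\FF$, and hence $\Ss$, is $2$-periodic above degree $2$, so $H^i(\Ss)\cong H^{i+2}(\Ss)$ for $i\geq 3$, and these must vanish lest $\HH^*(A)$ be infinite-dimensional.) So it remains to compute $H^0$, $H^1$, $H^2$, and for this I would work directly with the complex $A\xrightarrow{d^0}A^3\xrightarrow{d^1}A^4\xrightarrow{d^2}A^4\to\cdots$.

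Degree $0$ is immediate: $H^0(\Ss) = \ker d^0$ is the centraliser of $\{x, y, z\}$, i.e. the centre $Z(A)$. In the monomial basis $\{y^kz^ix^j\}$, genericity of $q$ forces an element commuting with $z$ to have $j = k = 0$, hence to lie in $\kk[z]$, and then commuting with $x$ forces $\si$-invariance, hence constancy; so $Z(A) = \kk$ and $\dim H^0(\Ss) = 1$ in both cases.

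For $H^1$ and $H^2$ I would exploit the internal $\inn$-grading on $A$, whose degree-$j$ part $A^j$ is the $q^j$-eigenspace of $\nu$. Inspecting the matrices $d_1,\dots,d_4$ — and recalling that $z$, $p$, $\si(p)$, $\al$, $\be$ all lie in the degree-zero part $\kk[z]$ — shows that $\FF$ is a graded projective resolution, so $\Ss$ decomposes as $\bigoplus_{w\in\inn}\Ss_{(w)}$ into weight subcomplexes of finite width. Van den Bergh duality $H^i(\Ss)\cong H_{2-i}(A, A^\nu)$ respects this grading (since $\nu$ acts on $A^j$ as the scalar $q^j$), and by Kowalzig--Kr\"ahmer the twisted Hochschild homology $H_*(A, A^\nu)$ is supported in internal weight $0$ — the $\mu = 1 = q^0$ summand; hence only $\Ss_{(0)}$ carries cohomology, and the problem shrinks to computing $H^1$ and $H^2$ of $\Ss_{(0)}$. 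Inside $\Ss_{(0)}$ the natural tool is the filtration by $z$-degree: in the monomial bases $\{z^ix^j\}$, $\{y^{-j}z^i\}$ the entries $x\ot 1,\ 1\ot x,\ y\ot 1,\ 1\ot y$ preserve $z$-degree up to a scalar power of $q$, while $\wt z\ot 1-1\ot z$ and $p_1\ot p_2$ never lower it, so $\{z\text{-degree}\geq N\}$ is a subcomplex, giving a spectral sequence whose $E_0$ differential is the (very simple) $z$-degree-preserving part and whose later differentials bring in multiplication by $p$ and $p'$, where the identity $\al p + \be p' = 1$ (the no-multiple-root hypothesis) takes effect. One reads off that in large $z$-degree everything is acyclic — so the answer is finite-dimensional — and that the survivors are precisely the cocycles exhibited in Theorem \ref{mainthm:2}. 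For $H^2$ there is also a shortcut avoiding the spectral sequence: $\HH^2(A)\cong H_0(A, A^\nu) = A/\langle am - m\nu(a)\rangle$, and a reduction using the defining relations of $A$ together with genericity of $q$ identifies this with $\kk[z]/V$, where $V$ is the image of the ideal $(p)$ under $\psi\mapsto\psi - q^{-1}\si(\psi)$; tracking the leading terms of the triangular basis $\{pz^k - q^{-1}\si(pz^k)\}_{k\geq 0}$ of $V$ gives $\dim\kk[z]/V = n$ when $p\nsim z$ and $=2$ when $p\sim z$ (the change of behaviour is caused by $p'$ becoming a nonzero constant).

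I expect the main obstacle to be $H^1$ together with the fine bookkeeping in $H^2$: one must show that \emph{exactly} $(x,-y,0)^T$ survives in $H^1$ when $p\nsim z$, that a second class $(x,y,2z)^T$ appears if and only if $p\sim z$, and — in degree $2$ — pin down the single missing weight $i = n-\ell$ among the candidate classes $(z^i,\wt{z^i},0,0)^T$. These are exactly the phenomena controlled by the higher differentials of the $z$-degree spectral sequence (the ones fed by the $p$- and $p'$-entries), which is why routing the argument through a spectral sequence, as in \cite{Solotar:Hochschild-homology-GWA-quantum}, is the natural strategy. The alternating sum $\dim H^0 - \dim H^1 + \dim H^2$, read off the finite weight-zero complex, should match $1 - 1 + n$ (resp.\ $1 - 2 + 2$) and serves as a useful consistency check.
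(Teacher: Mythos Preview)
The paper does not itself prove this statement; it is quoted from \cite[Thm~1.1]{Solotar:Hochschild-homology-GWA-quantum}, and the sentence immediately preceding the theorem confirms that the cited argument proceeds exactly by the spectral-sequence computation you describe. Your outline---restricting to the weight-zero subcomplex via Kowalzig--Kr\"ahmer and then filtering by $z$-degree---is therefore the strategy of the original source.

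Two comments on the details. Your shortcut for $H^2$ via $\HH^2(A)\cong H_0(A,A^\nu)=A/[A,A^\nu]$ is a legitimate alternative that sidesteps the spectral sequence for that one group; the reduction to $\kk[z]$ modulo the span of $\{pz^k-q^{-1}\si(pz^k)\}_{k\geq 0}$ is essentially the computation carried out in \cite[\S5.4]{L:gwa-def} and invoked later in this paper in the proof of Proposition~\ref{prop:hoch-homology1}. Your parenthetical ``alternative'' for the vanishing of $H^{\geq 3}$ is imprecise as written: two-periodicity together with the bare hypothesis ``$\HH^*(A)$ is finite-dimensional'' would be circular, but two-periodicity together with homological smoothness (which already gives $\HH^i(A)=0$ for $i\gg 0$) does force $H^i(\Ss)=0$ for all $i\geq 3$, so the repair is trivial and your primary argument via $\mathrm{pd}_{A^e}A=2$ is in any case correct.
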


In order to find the BV algebra structure of $\HH^*(A)$, we need to know bases for these groups. By the computation in \cite{Solotar:Hochschild-homology-GWA-quantum}, we can lift the bases for $H^0(\Ss)$ and $H^1(\Ss)$ easily, but for $H^2(\Ss)$, the lifting is boring and messy.  So we will give an enhanced version of Theorem \ref{thm:Per-cohomo-1}. 

\begin{theorem}\label{thm:Per-cohomo}
	When $p\nsim z$, the bases for $H^0(\Ss)$, $H^1(\Ss)$, $H^2(\Ss)$ are represented by the following sets of cocycles respectively,
		\begin{description}
			\item[$H^0(\Ss)$] $\{1\}$,
			\item[$H^1(\Ss)$] $\{(x,-y,0)^T\}$,
			\item[$H^2(\Ss)$] $\{(0, \wt{zp'}, -xz, zy)^T, (z^i, \wt{z^i},0,0)^T\,|\, 0\leq i<n, i\neq n-\ell\}$.
		\end{description}
		When $p\sim z$, the bases for $H^0(\Ss)$, $H^1(\Ss)$, $H^2(\Ss)$ are represented by the following sets of cocycles respectively,
		\begin{description}
			\item[$H^0(\Ss)$]  $\{1\}$,
			\item[$H^1(\Ss)$]  $\{(x, -y, 0)^T, (x, y, 2z)^T\}$,
			\item[$H^2(\Ss)$]  $\{(0, \wt{p}, -xz, zy)^T, (1, 1, 0, 0)^T\}$.
		\end{description}
\end{theorem}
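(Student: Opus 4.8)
The plan is to take Theorem \ref{thm:Per-cohomo-1} as given (so the dimensions of $H^0(\Ss)$, $H^1(\Ss)$, $H^2(\Ss)$ are already known to be $1,1,n$ in the case $p\nsim z$ and $1,2,2$ in the case $p\sim z$, with all higher groups vanishing), and \emph{exhibit} explicit cocycles that account for these dimensions. Concretely, for each listed column vector I would first verify it is a cocycle, i.e.\ lies in $\Ker d^0$, $\Ker d^1$, or $\Ker d^2$ as appropriate; then verify that the listed set has the right cardinality ($1$, $1$, and $n-1$ for $H^2$ in the case $p\nsim z$ — note $i$ ranges over $0\le i<n$ with $i\ne n-\ell$, giving $n-1$ elements, plus the one extra cocycle $(0,\wt{zp'},-xz,zy)^T$, totalling $n$); and finally verify linear independence modulo coboundaries, i.e.\ that no nontrivial linear combination lies in $\Ima d^0$ (for $H^1$) or $\Ima d^1$ (for $H^2$). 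Since the dimension count is already pinned down, once I have the correct number of linearly-independent-mod-coboundaries cocycles, they automatically form a basis.

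I would organize the verification by degree. For $H^0(\Ss)=\Ker d^0$: $d^0$ has matrix $[x\ot1-1\ot x,\ y\ot1-1\ot y,\ z\ot1-1\ot z]^T$ acting on $M=A$, so $d^0(m)=(xm-mx,\ ym-my,\ zm-mz)$, and $\Ker d^0$ is the centre $Z(A)$; since $q$ is generic, $Z(A)=\kk$, so $\{1\}$ is a basis. For $H^1(\Ss)$ I would write out $d^1\colon A^3\to A^4$ from the matrix $d_2$ and check that $(x,-y,0)^T$ (and, when $p\sim z$, also $(x,y,2z)^T$) is annihilated — these computations use only the defining relations $xz=qzx$, $yz=q^{-1}zy$, $yx=p$, $xy=\wt p$ and the identity $\al p+\be p'=1$ — and then check these are not coboundaries by a degree/grading argument: elements of $\Ima d^0$ have a specific form under the $\inn$-grading of $A$, and $(x,-y,0)^T$ is not of that form. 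For $H^2(\Ss)$ I would do the analogous but longer computation: apply $d^2$ (matrix $d_3$) to each proposed cocycle and confirm it vanishes, then show the $n$ (resp.\ $2$) cocycles are independent modulo $\Ima d^1$ by examining leading terms in the $z$-variable and the $\inn$-grading. The role of the excised index $i=n-\ell$ is presumably that the cocycle $(z^{n-\ell},\wt{z^{n-\ell}},0,0)^T$ becomes a coboundary (or is linearly dependent on $(0,\wt{zp'},-xz,zy)^T$ modulo coboundaries) precisely because $\ell a_\ell\ne0$ makes $z^{n-\ell}$ appear in $zp'$ up to scalar; I would make this dependence explicit.

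The main obstacle is the $H^2$ case when $p\nsim z$: one must produce $n$ honest cocycles, show they are independent modulo the image of $d^1$, and correctly identify which of the "obvious" candidates $(z^i,\wt{z^i},0,0)^T$ survive. The delicate point is the interaction between the cocycle $(0,\wt{zp'},-xz,zy)^T$ and the family $(z^i,\wt{z^i},0,0)^T$: showing that exactly one relation holds among $\{(0,\wt{zp'},-xz,zy)^T\}\cup\{(z^i,\wt{z^i},0,0)^T : 0\le i<n\}$ modulo coboundaries, so that deleting the index $i=n-\ell$ yields a basis. I would handle this by computing $d^1$ on a suitable element of $A^3$ with polynomial entries and matching it against $\sum_i c_i(z^i,\wt{z^i},0,0)^T - (0,\wt{zp'},-xz,zy)^T$; the condition $\al p+\be p'=1$ together with $\ell=\min\{j : ja_j\ne0\}$ is what forces the unique obstruction to sit in degree $n-\ell$.

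An alternative, perhaps cleaner, route would be to trace the cocycles through the spectral sequence of \cite{Solotar:Hochschild-homology-GWA-quantum} used to prove Theorem \ref{thm:Per-cohomo-1}, reading off representatives directly from the $E_\infty$-page; but since the excerpt says that lifting is "boring and messy", the direct verification above is the more robust plan, with the dimension count from Theorem \ref{thm:Per-cohomo-1} doing the bookkeeping so that one never has to compute $\Ima d^1$ in full — only enough of it to detect the single relation in the $p\nsim z$ case and to rule out coboundaries among the low-degree generators.
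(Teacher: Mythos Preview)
Your overall strategy---take the dimensions from Theorem~\ref{thm:Per-cohomo-1} as given and exhibit the right number of independent cocycles---matches the paper's, and for $H^0$ and $H^1$ your plan coincides with what the paper does (it too simply lifts these from the spectral sequence in \cite{Solotar:Hochschild-homology-GWA-quantum}). For $H^2(\Ss)$, however, the paper takes a somewhat different and cleaner route than your direct-independence check. It first invokes \cite[Prop.~6.2]{Solotar:Hochschild-homology-GWA-quantum} to know that every class has a representative $(U_1, U_2, xU_3, U_4y)^T$ with $U_j\in\kk[z]$; a coboundary then reduces $U_3$ to a scalar multiple of $z$, and solving $d^2(U)=0$ forces the general cocycle into the form $(U_1,\,\wt{U_1}+k\wt{zp'},\,-kxz,\,kzy)^T$. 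This shows at once that $H^2$ is \emph{spanned} by the class of $(0,\wt{zp'},-xz,zy)^T$ together with all $\mathfrak{u}^i$. The paper then writes down two explicit coboundaries, $d^1\bigl((0,z^jy,0)^T\bigr)=(z^jp,\wt{z^jp},0,0)^T$ and $d^1\bigl((0,0,-z)^T\bigr)=(zp',\wt{zp'},0,0)^T$, yielding two linear relations among $\mathfrak{u}^0,\ldots,\mathfrak{u}^n$; a $2\times(n{+}1)$ row reduction picks out $\mathfrak{u}^n$ and $\mathfrak{u}^{n-\ell}$ as the dependent ones, and the dimension count finishes. Your route---directly showing that $n$ specified cocycles are independent modulo $\Ima d^1$---would also work in principle, but without the structural reduction from \cite{Solotar:Hochschild-homology-GWA-quantum} you would have to analyse enough of $\Ima d^1$ to \emph{exclude} all possible relations, whereas the paper only needs to \emph{produce} two specific coboundaries and let the known dimension do the rest.

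One misconception to flag: the relation you are looking for among $\{(0,\wt{zp'},-xz,zy)^T\}\cup\{\mathfrak{u}^i:0\le i<n\}$ does \emph{not} involve the special cocycle. It is $\sum_{j=1}^n ja_j\,\mathfrak{u}^{n-j}=0$, obtained by eliminating $\mathfrak{u}^n$ from the two relations above, and it lives purely among the $\mathfrak{u}^i$; the class of $(0,\wt{zp'},-xz,zy)^T$ is genuinely independent of them. Also, the identity $\al p+\be p'=1$ plays no role in this theorem---it enters only in constructing the duality maps $f$, $g$ of Section~\ref{sec:vdb-duality}.
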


\begin{proof}	
	As we mentioned before the theorem, the statements concerning $H^0(\Ss)$ and $H^1(\Ss)$ can be obtained by lifting spectral sequences. Now we focus on the statements concerning $H^2(\Ss)$.   As a consequence of \cite[Prop.\ 6.2]{Solotar:Hochschild-homology-GWA-quantum}, one may choose a cocycle $U=(U_1, U_2, xU_3, U_4y)^T$ for some polynomials $U_j$ in $z$ as a representative of any second cohomological class. Observing the third column of the matrix $d^1$, we see that there exists a coboundary of the form $(V_1, V_2, xU_3, V_4y)^T$ for any given $U_3=u_0+u_2z^2+u_3z^3+\cdots$. This fact enables us to choose $U$ to be $(U_1, U_2, -kxz, U_4y)^T$ further for some scalar $k$. 
	Since
	\[
	d^2(U)=
	\begin{bmatrix}
	xU_1 -U_2 x+k \wt{p_1}xz p_2 \\
	-U_1 y + yU_2  -p_1 U_4y\wt{p_2} \\
	zU_1-U_1z  +kyxz -U_4 yx \\
	\wt{z}U_2-U_2\wt{z} +kxzy  -xU_4y
	\end{bmatrix}
	=
	\begin{bmatrix}
	\wt{U_1}x -U_2 x+k  \wt{z p'}x \\
	-y\wt{U_1} + yU_2  -y\wt{U_4 p'} \\
	kzp -U_4 p \\
	k\wt{zp}  -\wt{U_4p}
	\end{bmatrix},
	\]
	we conclude that $U$ is a cocycle if and only if $U_2=\wt{U_1}+k\wt{zp'}$, $U_4=kz$. Therefore, cocycles of the form $(U_1, \wt{U_1}+k\wt{zp'}, -kxz, kzy)^T$ represent all second cohomological classes.
	
	By the third row of $d^1$, we know $(0, \wt{zp'}, -xz, zy)^T\in \Ker d^2\setminus \Ima d^1$. Thus we have to judge whether a cocycle of the form $(U_1, \wt{U_1}, 0, 0)^T$ is a coboundary. Notice that
	\[
	d^1\bigl((0, z^jy, 0)^T\bigr)=(z^jp, \wt{z^jp},0, 0)^T=\sum_{i=0}^{n}a_i(z^{n-i+j}, \wt{z^{n-i+j}}, 0, 0)^T
	\]
	is a coboundary. Thus
	\[
	(z^{n+j}, \wt{z^{n+j}}, 0, 0)^T \in \sum_{i=0}^{n+j-1}\kk(z^i, \wt{z^i}, 0, 0)^T +\Ima d^1
	\]
	for all $j\in\nan$, and 
	\[
	(z^{j}, \wt{z^{j}}, 0, 0)^T \in \sum_{i=0}^{n}\kk(z^i, \wt{z^i}, 0, 0)^T +\Ima d^1
	\]
	for all $j\in\nan$.
	
	Denote by $\mathfrak{u}^i$ the cohomological class represented by $(z^i, \wt{z^i}, 0, 0)^T$. We obtain from the above computation that
	\begin{equation}\label{eq:HH2-1}
	\sum_{i=0}^{n}a_i\mathfrak{u}^{n-i}=0.
	\end{equation}
	Also, by $d^1\bigl((0, 0, -z)^T\bigr)=(zp', \wt{zp'}, 0, 0)^T$ we know
	\begin{equation}\label{eq:HH2-2}
	\sum_{i=0}^{n-1}(n-i)a_i\mathfrak{u}^{n-i}=0.
	\end{equation}
	Combining \eqref{eq:HH2-1} and \eqref{eq:HH2-2}, $(\mathfrak{u}^{n}, \mathfrak{u}^{n-1}, \dots, \mathfrak{u}^{0})^T$ satisfies the homogeneous system
	\[
	\begin{bmatrix}
	a_0 & a_1 & \cdots & a_{n-1} & a_n \\ na_0 & (n-1)a_1 & \cdots & a_{n-1} & 0
	\end{bmatrix}
	X=
	\begin{bmatrix}
	0 \\ 0
	\end{bmatrix}.
	\]
	Therefore, adding $(0, \wt{zp'}, -xz, zy)^T$ to a maximal linearly independent subset of $\{\mathfrak{u}^{n}, \mathfrak{u}^{n-1}, \dots, \mathfrak{u}^{0}\}$ will get a basis for $H^2(\Ss)$.
	
	The coefficient matrix of the homogeneous system is row-similar to 
	\[
	\begin{bmatrix}
	a_0 & a_1 & \cdots & a_{n-1} & a_n \\ 0 & a_1 & \cdots & (n-1)a_{n-1} & na_n
	\end{bmatrix}.
	\]
	Recall $\ell=\min\{ j \,|\, ja_j\neq 0 \}$, and it is not hard to see that the required maximal linearly independent subset is $\{\mathfrak{u}^{n-1},\ldots,\mathfrak{u}^{n-\ell+1}, \mathfrak{u}^{n-\ell-1},\ldots, \mathfrak{u}^{0}\}$ if $p\nsim z$, and is $\{\mathfrak{u}^{0}\}$  if $p\sim z$.
\end{proof}

Next, we will transfer the cohomology of $\Ss$ to the homology of $\TT$ via the cochain map $g$ given in Section \ref{sec:vdb-duality}. To this end,  let us write $\Phom{a}_r$ for the homology class which is represented by an $r$-cycle $a$ of $\TT$.

\begin{proposition}\label{prop:Per-homo}
	If $p\nsim z$, the nontrivial homological groups of $\TT$ are:
		\begin{enumerate}
			\item $H_0(\TT)=\kk\Phom{z}_0\oplus \bigoplus_{0\leq i< n, i\neq n-\ell} \kk\Phom{q^{-1}\wt{z^i \be}-z^i\be}_0$,
			\item $H_1(\TT)=\kk\Phom{(0, 0, 1)}_1$,
			\item $H_2(\TT)=\kk\Phom{(1\ot \be , -q^{-1}\ot \wt{\be}, q^{-1}\ot \al y, 0)}_2$.
		\end{enumerate}
		If $p\sim z$, the nontrivial homological groups of $\TT$ are:
		\begin{enumerate}
			\item $H_0(\TT)=\kk\Phom{z}_0\oplus \kk\Phom{1}_0$,
			\item $H_1(\TT)=\kk\Phom{(0, 0, 1)}_1\oplus \kk\Phom{(-q^{-1}y, x, 0)}_1$,
			\item $H_2(\TT)=\kk\Phom{(1\ot 1, -q^{-1}\ot 1, 0, 0)}_2$.
		\end{enumerate}
\end{proposition}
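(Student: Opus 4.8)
The plan is to push the cocycle representatives of Theorem~\ref{thm:Per-cohomo} through the cochain map $g$ of Theorem~\ref{thm:quasi-inverse} and then read off cycles of $\TT$. Since $g$ is a quasi-isomorphism (indeed a homotopy equivalence by Proposition~\ref{prop:quasi-inverse}), it carries a basis of $H^i(\Ss)$ to a basis of $H^i(\TT^{\sbullet}[-2])=H_{2-i}(\TT)$; together with Theorem~\ref{thm:Per-cohomo-1} this already pins down the dimensions, so the only real work is to compute the images explicitly and simplify them to the clean representatives claimed. Concretely, for the degree~$0$ class $1\in H^0(\Ss)$ I would apply $g^0$ (specialized at $M=A$), obtaining the $2$-chain $(1\ot\be)\cdot 1 = \be$ in the first slot, $-q^{-1}\wt\be$ in the second, $q^{-1}\al y$ in the third, $0$ in the fourth; i.e.\ $g^0(1)=(1\ot\be,-q^{-1}\ot\wt\be,q^{-1}\ot\al y,0)\mapsto(\be,-q^{-1}\wt\be\cdot,\dots)$ — wait, one must remember chains of $\TT$ are $M$-valued, so $1\ot\be$ acting on $1\in A$ gives $\be$ in $A$, etc. This is exactly the representative listed for $H_2(\TT)$ in case $p\nsim z$ (and $(1\ot 1,-q^{-1}\ot 1,0,0)\cdot 1$ when $p\sim z$, using $\al=\be\equiv$ constants when $p\sim z$). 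Similarly the degree~$1$ classes go through $g^1$ to give $1$-cycles, and the degree~$2$ classes through $g^2$ to give $0$-cycles.

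The key steps, in order, are: (i) record the specialization of $g^0,g^1,g^2$ to $M=A$ (the matrices are already in Lemma~\ref{lem:f-g-commutative}, one just reinterprets $\vphi_1\ot\vphi_2\in A^e$ as acting on $A$); (ii) apply $g^2$ to each $H^2(\Ss)$-cocycle and simplify: for $(z^i,\wt{z^i},0,0)^T$ one gets $-z^i\be+q^{-1}\wt{z^i}\wt\be-0+0$, matching $\Phom{q^{-1}\wt{z^i\be}-z^i\be}_0$ up to sign (absorb the sign into the class), and for $(0,\wt{zp'},-xz,zy)^T$ one gets $q^{-1}\wt{zp'}\wt\be-q^{-1}\al y\cdot xz$; here one uses $yx=p$, $xz=\wt z x$, $\al p+\be p'=1$ and the identity $\wt{zp'}\wt\be=\wt{zp'\be}$ to reduce this to a scalar multiple of $z$ (the claimed generator $\Phom{z}_0$), which is the point where the genericity of $q$ and the no-multiple-roots hypothesis get used to guarantee the reduction lands where expected; (iii) apply $g^1$ to $(x,-y,0)^T$ and to the extra class in the $p\sim z$ case, simplifying the resulting $1$-chains in $A^3$ to $(0,0,1)$ and $(-q^{-1}y,x,0)$ respectively, again via $x\vphi=\wt\vphi x$, $\vphi y=y\wt\vphi$ and $\al,\be$ constant; (iv) invoke Theorem~\ref{thm:Per-cohomo-1} (equivalently Van den Bergh duality) to conclude these images span the respective $H_r(\TT)$ and that $H_r(\TT)=0$ for $r\ge 3$ and $r<0$, which is immediate since $H^i(\Ss)=0$ for $i\ge 3$.

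The main obstacle will be step~(ii): the simplification of $g^2$ applied to $(0,\wt{zp'},-xz,zy)^T$ and to $(z^i,\wt{z^i},0,0)^T$ requires careful bookkeeping of the twisting $\si$, of Sweedler-type sums coming from $\mathfrak d$, and of the module action on $A$ (where $A^e$ acts by $a\ot b$ sending $m$ to $amb$), and one must check that the output $0$-chains are genuinely cycles of $\TT$, i.e.\ lie in $\Ker\pl_1$, and are not boundaries — though this last point is automatic once we know $g$ is a quasi-isomorphism and the source classes are a basis. A secondary subtlety is the sign and normalization conventions: the classes are only well-defined up to nonzero scalar, so any discrepancy of the form $c\cdot z$ versus $z$ is harmless, but I would state explicitly that $\Phom{\cdot}_r$ denotes the class up to such rescaling, consistent with the $\sim$ notation fixed at the start of Section~\ref{sec:bv-structure}. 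Finally, the $p\sim z$ case is genuinely easier because $p'$ is a nonzero constant, forcing $\be$ constant and $\al=0$ (or vice versa depending on normalization), so all the $\mathfrak d$-terms vanish and the formulas collapse to the stated ones with almost no computation.
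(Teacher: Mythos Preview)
Your plan is exactly the paper's: push the cocycle basis of Theorem~\ref{thm:Per-cohomo} through $g$ and read off cycle representatives in $\TT$. The $H_0$ and $H_2$ computations go through just as you sketch; in particular $g^2\bigl((0,\wt{zp'},-xz,zy)^T\bigr) = q^{-1}\wt{z\be p'}+q^{-1}xz\al y = q^{-1}\wt{z\be p'}+q^{-1}\wt{z\al p} = q^{-1}\wt z = z$ exactly, using only $x(z\al)y=\wt{z\al}\,\wt p$ and $\al p+\be p'=1$; genericity of $q$ plays no role at this step, and no sign or scalar discrepancy arises for $g^2\bigl((z^i,\wt{z^i},0,0)^T\bigr)$ either.

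There is one genuine gap, in your step~(iii). You say $g^1\bigl((x,-y,0)^T\bigr)$ simplifies to $(0,0,1)$ ``via $x\vphi=\wt\vphi x$, $\vphi y=y\wt\vphi$ and $\al,\be$ constant''. But $\al,\be$ are \emph{not} constant once $\deg p\ge 2$, and even when they are, the image is not $(0,0,1)$ as a chain: a direct evaluation gives
\[
g^1\bigl((x,-y,0)^T\bigr)=(q^{-1}y\wt\be,\; x\be,\; x\al y)^T=(q^{-1}y\wt\be,\; \wt\be x,\; \wt{\al p})^T,
\]
whose third entry is $1-\wt{\be p'}\neq 1$. What you actually need is that this chain is \emph{homologous} to $(0,0,1)$, i.e.\ that the difference is a boundary in $\TT$. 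The paper supplies precisely this missing step:
\[
(q^{-1}y\wt\be,\;\wt\be x,\;-\wt{\be p'})^T=\pl_{[-2]}^{0}\bigl((0,\,q^{-1}\wt\be,\,0,\,0)^T\bigr),
\]
checked by one matrix multiplication using $\wt{p_1}\wt{p_2}=\wt{p'}$. This explicit boundary is the only nontrivial ingredient beyond evaluating $g$, and your outline does not account for it. (For $p\sim z$ the paper does exactly what you propose---invoke Remark~\ref{rmk:g-independence} to take $\al=0$ and $\be$ constant---and the same boundary formula then collapses to a triviality.)
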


\begin{proof}
	(1) By applying $g$ to the representatives given by Theorem  \ref{thm:Per-cohomo}, one has the basis for the homology of $\TT[-2]$. When $p\nsim z$, a direct computation shows that
	\begin{align*}
	g^2\bigl((0, \wt{zp'}, -xz, zy)^T\bigr)
	&=q^{-1}\wt{z\be p'}+q^{-1}xz\al y =z, \\
	g^2\bigl((	z^i , \wt{z^i} , 0 , 0)^T\bigr)
	&=q^{-1}\wt{z^i \be}-z^i\be, \\
	g^1\bigl(( x , -y , 0)^T\bigr)
	&=(q^{-1}y\wt{\be} , x\be , x\al y)^T, \\
	g^0(1)&=(1\ot \be,	-q^{-1}\ot \wt{\be}, q^{-1}\ot \al y,	0 )^T.
	\end{align*} 
	After transposing all the vectors, we immediately obtain the assertions (1) and (3) in the case $p\nsim z$. For the assertion (2), we have to check
	\[
	(q^{-1}y\wt{\be} , x\be , x\al y)^T-(0, 0, 1)^T \in \Ima \pl^0_{[-2]}.
	\]
	In fact, this is a conclusion of
	\begin{align*}
	&\varphantom{=}
	\begin{bmatrix}
	q^{-1}y\wt{\be} \\ x\be \\ x\al y
	\end{bmatrix}-
	\begin{bmatrix}
	0 \\ 0 \\ 1
	\end{bmatrix}
	=\begin{bmatrix}
	q^{-1}y\wt{\be} \\ \wt{\be}x \\ -\wt{\be p'}
	\end{bmatrix} \\
	&=
	\begin{bmatrix}
	q^{-1}\ot y & y\ot 1 & 1\ot \wt{z}-z\ot 1 & 0 \\
	x\ot 1 & q\ot x & 0 & 1\ot z-\wt{z}\ot 1 \\
	-p_1\ot p_2 & -q\wt{p_1}\ot\wt{p_2}& x\ot q-q\ot x & y\ot 1-1\ot y
	\end{bmatrix}
	\begin{bmatrix}
	0 \\ q^{-1}\wt{\be} \\ 0 \\ 0
	\end{bmatrix}.
	\end{align*}
	
	For the situation $p\sim z$, the arguments are the same. The only point calling for special attention is that we may choose $\al=0$ and $\be=1$ safely, by Remark \ref{rmk:g-independence}.
\end{proof}

\section{Computation for the BV algebra structures}\label{sec:computation-bv}

%我们已有$A$的两个自由分解------周期分解和正规化bar分解, 它们之间存在比较映射如下:
%\[
%\begin {tikzcd}
%A^e \ar[d, shift left=1, "\eta_0"] & (A^e)^3 \ar[l, "d_1"']\ar[d, shift left=1, "\eta_1"] & (A^e)^4 \ar[l, "d_2"']\ar[d, shift left=1, "\eta_2"] & \cdots \ar[l, "d_3"']\\
%A\ot A \ar[u, shift left=1, "\xi_0"]  & A\ot\bA\ot A \ar[l, "b'_1"']\ar[u, shift left=1, "\xi_1"]  & A\ot\bA^{\ot 2}\ot A \ar[l, "b'_2"']\ar[u, shift left=1, "\xi_2"]  & \cdots \ar[l, "b'_3"']
%\end {tikzcd}
%\]
%其中低阶的$\eta_*$表达式是
%\begin{alignat*}{2}
%\eta_0\colon & A^e\longrightarrow A\ot A, & & 1\ot 1 \longmapsto 1[\,]1, \\
%\eta_1\colon & (A^e)^3\longrightarrow A\ot \bA\ot A, & & (1\ot 1,0,0)\longmapsto 1[x]1, \\
%& & & (0,1\ot 1,0)\longmapsto 1[y]1, \\
%& & & (0,0,1\ot 1)\longmapsto 1[z]1, \\
%\eta_2\colon & (A^e)^4\longrightarrow A\ot\bA^{\ot 2}\ot A, &\quad & (1\ot 1,0,0,0)\longmapsto 1[y, x] 1-1[p_1, z] p_2, \\
%& & &  (0,1\ot 1,0,0)\longmapsto 1[x, y] 1-1[\wt{p_1}, \wt{z}] \wt{p_2}, \\
%& & &  (0,0,1\ot 1,0)\longmapsto q[z, x] 1-1[x, z] 1, \\
%& & &  (0,0,0,1\ot 1)\longmapsto 1[z, y] 1-q[y, z] 1.
%\end{alignat*}

Let $A$, $\Ss$, $\TT$, etc.\ be as in the previous section. We have known two resolutions of $A$, the periodical resolution and the normalized bar resolution. Our aim is to compute the BV algebra structure on $\HH^*(A)$. So a comparison between the two resolutions is needed. In fact, we can construct the required comparison $\eta_*$ in lower degrees, by chasing the diagram
\[
\begin {tikzcd}
A^e \ar[d, "\eta_0"] & (A^e)^3 \ar[l, "d_1"']\ar[d, "\eta_1"] & (A^e)^4 \ar[l, "d_2"']\ar[d,  "\eta_2"] & \cdots \ar[l, "d_3"']\\
A\ot A   & A\ot\bA\ot A \ar[l, "b'_1"']  & A\ot\bA^{\ot 2}\ot A \ar[l, "b'_2"'] & \cdots \ar[l, "b'_3"']
\end {tikzcd}
\]
in which
\begin{alignat*}{2}
\eta_0\colon & A^e\longrightarrow A\ot A, & & 1\ot 1 \longmapsto 1[\,]1, \\
\eta_1\colon & (A^e)^3\longrightarrow A\ot \bA\ot A, & & (1\ot 1,0,0)\longmapsto 1[x]1, \\
& & & (0,1\ot 1,0)\longmapsto 1[y]1, \\
& & & (0,0,1\ot 1)\longmapsto 1[z]1, \\
\eta_2\colon & (A^e)^4\longrightarrow A\ot\bA^{\ot 2}\ot A, &\quad & (1\ot 1,0,0,0)\longmapsto 1[y, x] 1-1[p_1, z] p_2, \\
& & &  (0,1\ot 1,0,0)\longmapsto 1[x, y] 1-1[\wt{p_1}, \wt{z}] \wt{p_2}, \\
& & &  (0,0,1\ot 1,0)\longmapsto q[z, x] 1-1[x, z] 1, \\
& & &  (0,0,0,1\ot 1)\longmapsto 1[z, y] 1-q[y, z] 1.
\end{alignat*}

Notice that $\eta_*$ induces a quasi-isomorphism $\TT \to C_{\sbullet}(A, A^\nu)$, which will be still denoted by $\eta_*$, by abuse of notations. For any $r$-cycle $c$ of $C_{\sbullet}(A, A^\nu)$, the homology class represented by $c$ will be written as $\Hhom{c}_r$ from now on. Thus let us begin to compute the BV algebra structure.

\subsection{Case one: $p\nsim z$}\label{subsec:case-1}

First of all, it follows from the dimensions of the cohomological groups that in $\HH^*(A)$, $a\smallsmile b=0$ if $|a|$, $|b|\geq 1$. Next we will determine the operator $\De$.

\begin{lemma}\label{lem:lemma1}
	If $i\geq 1$, then $\Phom{(0, 0, z^i)}_1=0$, and hence $\Hhom{1[z^{i+1}]}_1=0$.
\end{lemma}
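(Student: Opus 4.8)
The plan is to establish the first assertion — that $(0,0,z^i)$ is a boundary in $\TT$ for each $i\geq 1$ — and then to transport it along the comparison $\eta_*$ to obtain the ``hence''. (We are in the case $p\nsim z$, and the argument uses only that $q$ is not a root of unity.) Observe first that $(0,0,z^i)$ is a cycle, since $z^i$ commutes with $z$ and $\nu$ fixes $z$, so that $\Phom{(0,0,z^i)}_1$ makes sense.

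Rather than writing down a $2$-chain of $\TT$ by hand, I would push $(0,0,z^i)$ through the quasi-isomorphism $f\colon\TT^{\sbullet}(A)[-2]\to\Ss^{\sbullet}(A)$ of Theorem \ref{thm:quasi-inverse}: it suffices to check that $f^1\bigl((0,0,z^i)^T\bigr)$ is a coboundary of $\Ss$. Only the third column $(1\ot x,\,-y\ot 1,\,0)^T$ of $f^1$ contributes, so, using the relations $xz^i=q^iz^ix$ and $z^iy=q^iyz^i$ in $A$,
\[
f^1\bigl((0,0,z^i)^T\bigr)=(z^ix,\,-yz^i,\,0)^T=q^{-i}\,(xz^i,\,-z^iy,\,0)^T .
\]
On the other hand $d^0$ sends a polynomial $\vphi\in\kk[z]$ to $\bigl((\wt{\vphi}-\vphi)x,\,-y(\wt{\vphi}-\vphi),\,0\bigr)^T$, and $\wt{z^i}-z^i=(q^i-1)z^i$ with $q^i\neq 1$; hence $(xz^i,-z^iy,0)^T=d^0\bigl(\tfrac{q^i}{q^i-1}\,z^i\bigr)$ and therefore $f^1\bigl((0,0,z^i)^T\bigr)=d^0\bigl(\tfrac{1}{q^i-1}\,z^i\bigr)$ is a coboundary. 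Since $f$ is a quasi-isomorphism, $(0,0,z^i)$ is then a boundary of $\TT$, i.e.\ $\Phom{(0,0,z^i)}_1=0$.

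For the ``hence'', recall that $\eta_*\colon\TT\to C_{\sbullet}(A,A^\nu)$ is a quasi-isomorphism and $\eta_1\bigl((0,0,z^i)\bigr)=z^i[z]$ by the definition of $\eta_1$; so the above gives $\Hhom{z^i[z]}_1=0$ for all $i\geq 1$. To replace $z^i[z]$ by $1[z^{i+1}]$ I would telescope the Hochschild differential: since $\nu(z)=z$ one has $\dd_2\bigl(z^{k-1}[z,z^{i+1-k}]\bigr)=z^k[z^{i+1-k}]-z^{k-1}[z^{i+2-k}]+z^i[z]$, and summing over $k=1,\dots,i$ gives
\[
\dd_2\Bigl(\textstyle\sum_{k=1}^{i}z^{k-1}[z,z^{i+1-k}]\Bigr)=(i+1)\,z^i[z]-1[z^{i+1}],
\]
whence $\Hhom{1[z^{i+1}]}_1=(i+1)\,\Hhom{z^i[z]}_1=0$.

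All the computations above are short; the only real care is bookkeeping with conventions — the exact module actions through which the matrices $f^1$, $d^0$ and the Hochschild differential act, and the $\nu$-twist built into $C_{\sbullet}(A,A^\nu)$ — so that the powers of $q$, the factor $i+1$, and the $\wt{(-)}$-twists come out right. I expect that to be the main (if modest) obstacle; everything else is routine.
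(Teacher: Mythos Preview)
Your proof is correct and takes a genuinely different route from the paper's. The paper exhibits an explicit $2$-chain
\[
u=\bigl(-(1-q^i)^{-1}\be z^i,\;q^{-1}(1-q^i)^{-1}\wt{\be z^i},\;0,\;(1-q^i)^{-1}x\al z^i\bigr)
\]
in $\TT$ and checks by hand that $\partial^{-2}(u)=(0,0,z^i)^T$; this requires guessing a preimage involving the auxiliary polynomials $\al,\be$. You instead transport the cycle through the quasi-isomorphism $f$ of Theorem~\ref{thm:quasi-inverse} and observe that $f^1\bigl((0,0,z^i)^T\bigr)=(z^ix,-yz^i,0)^T=d^0\bigl(\tfrac{1}{q^i-1}z^i\bigr)$ is visibly a coboundary in $\Ss$. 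Your approach trades the explicit witness in $\TT$ for a one-line check in $\Ss$, exploiting the duality machinery already in place; it is shorter and avoids $\al,\be$ entirely, at the (harmless) cost of not producing the boundary in $\TT$ explicitly. For the ``hence'' clause the paper simply writes ``Therefore, $\Hhom{1[z^{i+1}]}_1=0$'', whereas you supply the telescoping identity $(i+1)z^i[z]-1[z^{i+1}]=\dd_2\bigl(\sum_{k=1}^i z^{k-1}[z,z^{i+1-k}]\bigr)$; this makes the passage from $\Hhom{z^i[z]}_1=0$ to $\Hhom{1[z^{i+1}]}_1=0$ explicit, which is a genuine improvement in rigor.
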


\begin{proof}
	Clearly, $(0, 0, z^i)$ is indeed a $1$-cycle of $\TT$, so $\Phom{(0, 0, z^i)}_1$ makes sense. In order to show $\Phom{(0, 0,z^i) }_1=0$, it is sufficient to verify $(0, 0, z^i)^T\in \Ima \partial^{-2}$. 
	
	Let $u=( -(1-q^i)^{-1}\be z^i , q^{-1}(1-q^i)^{-1}\wt{\be z^i} , 0 , (1-q^i)^{-1} x\al z^i )^T$, and we have
	\begin{align*}
	\partial^{-2}(u)
	&=
	\begin{bmatrix}
	q^{-1}\ot y & y\ot 1 & 1\ot \wt{z}-z\ot 1 & 0\\
	x\ot 1 & q\ot x & 0 & 1\ot z-\wt{z}\ot 1\\
	-p_1\ot p_2 & -q\wt{p_1}\ot \wt{p_2} & x\ot q-q\ot x & y\ot 1-1\ot y
	\end{bmatrix}
	\begin{bmatrix}
	-(1-q^i)^{-1}\be z^i \\ q^{-1}(1-q^i)^{-1}\wt{\be z^i}\\ 0\\ (1-q^i)^{-1}x\al z^i
	\end{bmatrix}\\ 
	&=
	\begin{bmatrix}
	-q^{-1}(1-q^i)^{-1}\be z^i y+q^{-1}(1-q^i)^{-1}y \wt{\be z^i }\\
	-(1-q^i)^{-1}x\be z^i+ (1-q^i)^{-1}\wt{\be z^i}x\\
	(1-q^i)^{-1}\be z^ip'-(1-q^i)^{-1}\wt{\be z^i p'}+(1-q^i)^{-1}\al z^i p-(1-q^i)^{-1}\wt{\al z^i p}
	\end{bmatrix} \\
	&=
	\begin{bmatrix}
	0\\0\\(1-q^i)^{-1}z^i-(1-q^i)^{-1}\wt{z^i}
	\end{bmatrix}\\
	&=
	\begin{bmatrix}
	0 \\ 0\\ z^i
	\end{bmatrix}.
	\end{align*}
	It follows that  $(0, 0, z^i)$ is a $1$-boundary, namely $\Phom{(0, 0,z^i) }_1=0$. Therefore, $\Hhom{1[ z^{i+1}]}_1=0$, as desired.
\end{proof}

\begin{proposition}\label{prop:hoch-homology1}
	The nontrivial homological groups $H_{*}(A, A^\nu)$ which are induced by the Van den Bergh duality $\HH^*(A)\cong H_{2-*}(A, A^\nu)$ in Theorem \ref{thm:quasi-inverse} are:
	\begin{enumerate}
		\item $H_0(A, A^\nu)=\bigoplus_{0\leq i< n} \kk\Hhom{z^i[\,]}_0$,
		\item $H_1(A, A^\nu)=\kk\Hhom{1[ z]}_1$,
		\item $H_2(A, A^\nu)=\kk\Hhom{\fc}_2$, where $\fc$ is $\be[ y, x]-q^{-1}\wt{\be}[x, y]-\be p_2[p_1, z]+q^{-1}\wt{\be p_2}[\wt{p_1}, \wt{z}]-q^{-1}\al y[x, z] +\al y[ z, x]$.
	\end{enumerate}
\end{proposition}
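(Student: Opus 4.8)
The plan is to transport the homology classes of Proposition~\ref{prop:Per-homo} along the comparison map $\eta_*$. Both $\FF$ and the normalized bar resolution are projective resolutions of $A$ over $A^e$, so the assignments $\eta_0,\eta_1,\eta_2$ recorded above extend to a chain map lifting $\mathrm{id}_A$, which is automatically a quasi-isomorphism; after applying $A^\nu\ot_{A^e}-$ this is the quasi-isomorphism $\eta_*\colon\TT\to C_{\sbullet}(A,A^\nu)$ invoked in the statement. The only point to check in advance is that the squares of the comparison diagram commute, i.e.\ $b'_1\eta_1=\eta_0 d_1$ and $b'_2\eta_2=\eta_1 d_2$; this is a routine verification against the matrices of $d_1$ and $d_2$. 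Granting it, $\eta_*$ induces isomorphisms on homology, and it remains to feed it the bases produced in Proposition~\ref{prop:Per-homo}.

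In degrees $1$ and $2$ this is a direct substitution. The $1$-cycle $(0,0,1)$ is sent by $\eta_1$ to $1[z]$, which gives~(2) (recall from the proof of Proposition~\ref{prop:Per-homo} that $g^1((x,-y,0)^T)$ is homologous to $(0,0,1)$). For~(3) I would evaluate $\eta_2$ on the $2$-cycle $(1\ot\be,-q^{-1}\ot\wt\be,q^{-1}\ot\al y,0)$: expanding each of its four entries through the formulas for $\eta_2$ and collecting terms reproduces $\fc$ on the nose. Lemma~\ref{lem:lemma1} confirms that nothing further survives in degree $1$, although the one-dimensionality of $H_1(A,A^\nu)$ and $H_2(A,A^\nu)$ already follows from the dimension count in Proposition~\ref{prop:Per-homo}.

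Degree $0$ takes one more step, and is where the work lies. Since $\eta_0$ is the identity of $A$ in degree $0$, it carries the basis of $H_0(\TT)$ to the classes $\Hhom{z[\,]}_0$ and $\Hhom{(q^{-1}\wt{z^i\be}-z^i\be)[\,]}_0$, and one must identify their span with $\bigoplus_{0\le i<n}\kk\Hhom{z^i[\,]}_0$. For this I would examine $H_0(A,A^\nu)=A/\Ima\dd_1$ directly: reducing $\Ima\dd_1$ to the algebra generators $x,y,z$ shows that every element of $\mathcal{B}$ other than a power of $z$ is a boundary, and that the degree-zero part of $\Ima\dd_1$ equals $\{\si(\phi)-q\phi\mid\phi\in(p)\}$, which contains no nonzero polynomial of degree $<n$; hence $z^0[\,],\dots,z^{n-1}[\,]$ are linearly independent, and by the dimension count they form a basis. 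The main obstacle is then the bookkeeping that rewrites $z$ and each $q^{-1}\wt{z^i\be}-z^i\be$ as explicit $\kk$-combinations of $z^0,\dots,z^{n-1}$ modulo these relations---routine but, as on the cohomology side, somewhat messy. With that done, the three stated bases are in hand.
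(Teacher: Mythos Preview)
Your argument in degrees $1$ and $2$ is the paper's: apply $\eta_*$ to the cycles of Proposition~\ref{prop:Per-homo} and read off the answer. In degree $0$ you take a different route. The paper transports the basis $\{\Phom{z}_0\}\cup\{\Phom{q^{-1}\wt{z^i\be}-z^i\be}_0\mid 0\le i<n,\ i\neq n-\ell\}$ through $\eta_0=\mathrm{id}$, observes that the coefficient of $z^1$ in each $q^{-1}\wt{z^i\be}-z^i\be$ vanishes, and then invokes \cite[\S5.4]{L:gwa-def} to express $\Hhom{z^j}_0$ for $j\ge n$ in terms of $\Hhom{1}_0,\Hhom{z^2}_0,\ldots,\Hhom{z^{n-1}}_0$; a dimension comparison finishes. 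You instead analyse $H_0(A,A^\nu)=A^\nu/[A,A^\nu]$ directly: the identity $[ab,m]_\nu=a[b,m]_\nu+[a,m\nu(b)]_\nu$ justifies your reduction to generators, after which your two claims (that basis elements outside $\kk[z]$ are boundaries, and that $\kk[z]\cap\Ima\dd_1=\{\si(\psi)-q\psi\mid\psi\in(p)\}$) follow by short computations with $[x,-]_\nu$, $[y,-]_\nu$, $[z,-]_\nu$. This is a valid and somewhat more self-contained alternative, trading the external citation for a direct rederivation of a small piece of $H_0$. One remark: once you have established that $\{\Hhom{z^i[\,]}_0\mid 0\le i<n\}$ is a basis, the final ``bookkeeping'' you describe---rewriting $z$ and the $q^{-1}\wt{z^i\be}-z^i\be$ in that basis---is superfluous for the Proposition as stated, since it only asserts the basis itself.
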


\begin{proof}
	Combining Proposition \ref{prop:Per-homo} and the expressions of $\eta_*$ given above, we compute the bases for the three nontrivial groups. Notice that the results for $H_1(A, A^\nu)$ and $H_2(A, A^\nu)$ are exactly the same as given in the proposition; however, the result for $H_0(A, A^\nu)$ is
	\[
	\bigl\{\Hhom{z}_0, \Hhom{q^{-1}\wt{z^i \be}-z^i\be}_0 \,\bigm|\, 0\leq i <n, i\neq n-\ell \bigr\}.
	\]
	since the only common basis element is $\Hhom{z}_0$, we have to show that the subspace  with basis
	\[
	\bigl\{\Hhom{q^{-1}\wt{z^i \be}-z^i\be}_0 \,\bigm|\, 0\leq i <n, i\neq n-\ell \bigr\}.
	\]
	also contains
	\[
	\bigl\{\Hhom{z^i[\,]}_0 \,\bigm|\, 0\leq i <n, i\neq 1 \bigr\}
	\]
	as another basis.
	
	Suppose $\be=\sum_ju_jz^j$. Then
	\[
	q^{-1}\wt{z^i \be}-z^i\be=\sum_j u_j (q^{i+j-1}-1) z^{i+j},
	\]
	whose coefficient of $z$ is equal to zero. So we conclude that
	\begin{equation}\label{eq:inclusion1}
	\bigoplus_{0\leq i <n, i\neq n-\ell}\kk \Hhom{q^{-1}\wt{z^i \be}-z^i\be}_0 \subset \sum_{j\in \mathbb{N}, j\neq 1} \kk\Hhom{z^j}_0.
	\end{equation}
	The first author has proved in \cite[\S 5.4]{L:gwa-def} that $\Hhom{z^j}_0$  is a linear combination of $\Hhom{1}_0$, $\Hhom{z^2}_0$, $\Hhom{z^3}_0$, $\dots$, $\Hhom{z^{n-1}}_0$ for all $j\geq n$.  This fact, together with \eqref{eq:inclusion1}, implies
	\[
	\bigoplus_{0\leq i <n, i\neq n-\ell}\kk \Hhom{q^{-1}\wt{z^i \be}-z^i\be}_0 \subset \sum_{0\leq j <n, j\neq 1} \kk\Hhom{z^j}_0.
	\]
	The inclusion is in fact an equality by comparing the dimensions of both sides, and furthermore, the right-hand side is a direct sum.
\end{proof}

\begin{remark}
	In Hochschild cohomology theory, the class $\Hhom{\fc}_2$, which corresponds to the cohomology class $1\in Z(A)=\HH^0(A)$, is called the fundamental class for the Van den Bergh duality. 
\end{remark}

We are so fortunate that all the basis elements in Proposition \ref{prop:hoch-homology1} belong to $C_{\sbullet}^{(1)}(A, A^\nu)$, and hence Theorem \ref{thm:kk-bv} works. Applying the Connes operator $\BB$ to these elements, we have
\begin{align*}
\BB \colon C^{(1)}_0(A, A^\nu) &\longrightarrow C^{(1)}_1(A, A^\nu) \\
z^i[\,] &\longmapsto 1[z^i], \quad 0\leq i< n, i\neq n-\ell, \\
\BB \colon C^{(1)}_1(A, A^\nu) &\longrightarrow C^{(1)}_2(A, A^\nu) \\
1[ z] &\longmapsto 1[1, z]-1[z, 1].
\end{align*}
We remind the reader that if $i=0$ then $1[z^i]=1[1]=1\ot \bar{1}=0$, and similarly $1[1, z]=1[z, 1]=0$. Moreover, by Lemma \ref{lem:lemma1}, $\Hhom{1[z^{i}]}_1=0$ for $2\leq i< n$. Thus $\BB(z[\,])=1[z]$ is the unique nontrivial equation.

Recall that in the proof of Theorem \ref{thm:Per-cohomo}, the cohomological class represented by $(z^i, \wt{z^i}, 0, 0)^T$ is denoted by $\mathfrak{u}^i$. For completeness, the classes represented by $1$, $(x,-y,0)^T$, $(0, \wt{zp'}, -xz, zy)^T$ are denoted by $1$, $\mathfrak{s}$, $\mathfrak{v}$, respectively.  Since $z[\,]$, $1[z]$ correspond to $\mathfrak{v}$, $\mathfrak{s}$, the unique nontrivial equation $\BB(z[\,])=1[z]$ gives rise to $\De(\mathfrak{v})=\mathfrak{s}$. All the foregoing results are summarized as

\begin{theorem}\label{thm:main-result-1}
	If $p\nsim z$, then $\HH^*(A)$ as a BV algebra has $\{1, \mathfrak{s}, \mathfrak{v}, \mathfrak{u}^i \,|\, 0\leq i< n, i\neq n-\ell \}$ as a basis, where $|1|=0$, $|\mathfrak{s}|=1$, $|\mathfrak{v}|=|\mathfrak{u}^i|=2$, and in addition,
	\begin{enumerate}
		\item $1$ is the identity of $\HH^*(A)$ with respect to the cup product, and the cup products of other pairs of basis elements are trivial,
		\item $\De(\mathfrak{v})=\mathfrak{s}$, and $\De$ acts on other basis elements trivially.
	\end{enumerate}
\end{theorem}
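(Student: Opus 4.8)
The plan is to assemble Theorem~\ref{thm:main-result-1} from the three ingredients already in place: the explicit cocycle bases of Theorem~\ref{thm:Per-cohomo}, the homological computation of Proposition~\ref{prop:hoch-homology1}, and the action of the Connes operator $\BB$ via Theorem~\ref{thm:kk-bv}. First I would pin down the cup product. Since $\HH^0(A)=\kk\cdot 1$ with $1$ the unit, the only content of statement (1) is that $a\smallsmile b=0$ whenever $|a|,|b|\geq 1$. By Theorem~\ref{thm:Per-cohomo-1}, $\HH^j(A)=0$ for $j\geq 3$, so any product of two classes of degree $\geq 1$ lands in $\HH^2(A)$ or $\HH^3(A)$; the degree-$(1,1)$ case gives an element of $\HH^2(A)$, and the degree-$(1,2)$ and $(2,2)$ cases are automatically zero for degree reasons. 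Thus one only has to show $\mathfrak{s}\smallsmile\mathfrak{s}=0$. Graded-commutativity forces $\mathfrak{s}\smallsmile\mathfrak{s}=-\mathfrak{s}\smallsmile\mathfrak{s}$, hence $2\,\mathfrak{s}\smallsmile\mathfrak{s}=0$, and since $\operatorname{char}\kk\neq 2$ this gives $\mathfrak{s}\smallsmile\mathfrak{s}=0$. So statement (1) is essentially a dimension-plus-parity argument and needs no explicit cochain-level multiplication.

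Next I would establish statement (2), the formula $\De(\mathfrak{v})=\mathfrak{s}$ together with the vanishing of $\De$ on all other basis elements. The strategy is exactly Kowalzig--Kr\"ahmer: transport each cohomology basis element of Theorem~\ref{thm:Per-cohomo} through the Van den Bergh quasi-isomorphism $g$ of Theorem~\ref{thm:quasi-inverse} into a cycle of $\TT$, then through the comparison map $\eta_*$ into a Hochschild cycle in $C_{\sbullet}(A,A^\nu)$, apply $\BB$, and read the result back. Proposition~\ref{prop:hoch-homology1} already records the outcome of the first two transports: $1\in\HH^0$ corresponds to $\Hhom{\fc}_2$, the degree-one class $\mathfrak{s}$ corresponds to $\Hhom{1[z]}_1$, the degree-two class $\mathfrak{v}$ corresponds to $\Hhom{z[\,]}_0$, and the remaining degree-two classes $\mathfrak{u}^i$ correspond to the classes $\Hhom{z^i[\,]}_0$ with $0\le i<n$, $i\neq 1$ (here I would cite the identification $\{\Hhom{q^{-1}\wt{z^i\be}-z^i\be}_0\}$ with $\{\Hhom{z^i[\,]}_0 : i\neq 1\}$ already proved in Proposition~\ref{prop:hoch-homology1}). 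I would then verify that every one of these representatives lies in the weight-one subcomplex $C^{(1)}_\bullet(A,A^\nu)$ — which is immediate since each monomial $z^i$ is $\nu$-invariant and the cycle $\fc$ is a sum of weight-one terms — so Theorem~\ref{thm:kk-bv} applies and $\De$ is computed by $\BB$ on these representatives.

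The core of the argument is then the short $\BB$-computation itself. On $C^{(1)}_0$, $\BB(z^i[\,])=1[z^i]$, which is $0$ when $i=0$ (as $\bar 1=0$ in $\bar A$); for $i=1$ it equals $1[z]$, giving $\De(\mathfrak{v})=\mathfrak{s}$; and for $2\le i<n$ Lemma~\ref{lem:lemma1} shows $\Hhom{1[z^i]}_1=0$, so $\De(\mathfrak{u}^i)=0$. On $C^{(1)}_1$, $\BB(1[z])=1[1,z]-1[z,1]=0$ in the normalized complex, so $\De(\mathfrak{s})=0$; and $\De(1)=0$ since $\BB$ lowers homological degree and $\Hhom{\fc}_2$ sits in top degree with $H_3=0$, or more simply because $\De$ has degree $-1$ and $\HH^{-1}=0$. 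Assembling: $\De(\mathfrak{v})=\mathfrak{s}$ and $\De$ kills $1,\mathfrak{s},\mathfrak{u}^0,\dots$, which is precisely statement (2). Finally, since $\{1,\mathfrak{s},\mathfrak{v},\mathfrak{u}^i\}$ has the right cardinality $2+n$ matching Theorem~\ref{thm:Per-cohomo-1}, it is a basis, and $(\HH^*(A),\smallsmile,\De)$ is a BV algebra by Theorem~\ref{thm:kk-bv}.

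The main obstacle is bookkeeping rather than conceptual: one must be careful that the representatives chosen in Proposition~\ref{prop:hoch-homology1} genuinely lie in $C^{(1)}_\bullet(A,A^\nu)$ (so that the truncated complex computing $H_*(A,A^\nu)$ is the one to which Theorem~\ref{thm:kk-bv} is applied), and that the change of basis from $\{\Hhom{q^{-1}\wt{z^i\be}-z^i\be}_0\}$ to $\{\Hhom{z^i[\,]}_0\}$ respects the $\BB$-computation — i.e., that $\BB$ still sends the new basis vector $z[\,]$ to $1[z]$ and annihilates $z^i[\,]$ for $i\neq 1$ modulo boundaries, which is exactly what Lemma~\ref{lem:lemma1} secures. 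Once that is checked, the rest is the two-line Connes-operator calculation above.
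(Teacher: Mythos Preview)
Your proposal is correct and follows essentially the same approach as the paper: the cup-product vanishing is argued from the dimensions of $\HH^j(A)$ for $j\geq 3$ (you make the graded-commutativity argument for $\mathfrak{s}\smallsmile\mathfrak{s}$ explicit, which the paper leaves implicit), and the $\De$-computation proceeds exactly via Proposition~\ref{prop:hoch-homology1}, Lemma~\ref{lem:lemma1}, and the Connes operator on the normalized complex. Your remark about the change of basis from $\{\Hhom{q^{-1}\wt{z^i\be}-z^i\be}_0\}$ to $\{\Hhom{z^i[\,]}_0\}$ is the right caution: the correspondence $\mathfrak{u}^i\leftrightarrow\Hhom{z^i[\,]}_0$ is only at the level of spanned subspaces, not index-by-index, but since $\BB$ annihilates the entire span $\{\Hhom{z^i[\,]}_0:i\neq 1\}$ in homology this suffices.
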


\begin{corollary}
	The Gerstenhaber bracket on $\HH^*(A)$ is trivial for all $p\nsim z$.
\end{corollary}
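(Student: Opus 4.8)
The plan is to deduce the vanishing of the bracket purely formally from the explicit BV structure recorded in Theorem \ref{thm:main-result-1}, using the identity \eqref{eq:gerstenhaber-bracket-generator} that expresses the Gerstenhaber bracket in terms of $\smallsmile$ and $\De$. Since $[\cdot,\cdot]$ is graded-bilinear and graded-antisymmetric, it suffices to check that $[a,b]=0$ for every unordered pair $a,b$ drawn from the basis $\{1,\mathfrak{s},\mathfrak{v},\mathfrak{u}^i\}$. I would split this into two cases: one where one of the two classes is the degree-zero generator $1$, and one where both classes have positive degree.

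First I would dispose of the pairs involving $1$. For any homogeneous class $b$, formula \eqref{eq:gerstenhaber-bracket-generator} gives $[1,b]=\De(1\smallsmile b)-\De(1)\smallsmile b-1\smallsmile\De(b)=\De(b)-0-\De(b)=0$, where I use that $1$ is the identity for $\smallsmile$ (Theorem \ref{thm:main-result-1}(1)) and that $\De(1)=0$, which is part of the standing BV setup underlying Theorem \ref{thm:kk-bv}. This handles $[1,\mathfrak{s}]$, $[1,\mathfrak{v}]$, and $[1,\mathfrak{u}^i]$ simultaneously.

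Next I would treat pairs $a,b$ with $|a|,|b|\geq 1$. By Theorem \ref{thm:main-result-1}(1) the cup product of any two positive-degree classes is zero, so in particular $a\smallsmile b=0$ and hence $\De(a\smallsmile b)=0$. Moreover, inspecting Theorem \ref{thm:main-result-1}(2), the operator $\De$ sends every positive-degree basis element either to $0$ or to $\mathfrak{s}$, which again has positive degree; consequently $\De(a)\smallsmile b$ and $a\smallsmile\De(b)$ are once more cup products of two positive-degree classes and therefore vanish. All three summands in \eqref{eq:gerstenhaber-bracket-generator} being zero, we get $[a,b]=0$, which covers $[\mathfrak{s},\mathfrak{s}]$, $[\mathfrak{s},\mathfrak{v}]$, $[\mathfrak{s},\mathfrak{u}^i]$, $[\mathfrak{v},\mathfrak{v}]$, $[\mathfrak{v},\mathfrak{u}^i]$, $[\mathfrak{u}^i,\mathfrak{u}^j]$.

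There is no real obstacle here: the statement is an immediate formal corollary of the computed BV data, and the only point requiring a word of justification is the use of $\De(1)=0$, which holds because the BV operator of Theorem \ref{thm:kk-bv} is normalized so that it annihilates the unit. I would close by remarking that, combined with the cup-product description, this shows $\HH^*(A)$ is a Gerstenhaber algebra with both structure operations as trivial as possible given the identity element.
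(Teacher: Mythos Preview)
Your proposal is correct and follows exactly the approach indicated in the paper, which simply cites Theorem~\ref{thm:main-result-1} together with formula~\eqref{eq:gerstenhaber-bracket-generator}; you have merely supplied the routine verification that each summand in~\eqref{eq:gerstenhaber-bracket-generator} vanishes. One small remark: you need not appeal to the general BV setup for $\De(1)=0$, since Theorem~\ref{thm:main-result-1}(2) already records that $\De$ acts trivially on $1$.
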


\begin{proof}
	This follows from Theorem \ref{thm:main-result-1} and the formula \eqref{eq:gerstenhaber-bracket-generator}.
\end{proof}

\subsection{Case two: $p\sim z$}

The argument used in subsection \ref{subsec:case-1} works for this case.

\begin{proposition}\label{prop:hoch-homology2}
	The nontrivial homological groups $H_{*}(A, A^\nu)$ which are induced by the Van den Bergh duality $\HH^*(A)\cong H_{2-*}(A, A^\nu)$ in Theorem \ref{thm:quasi-inverse} are:
	\begin{enumerate}
		\item $H_0(A, A^\nu)=\kk\Hhom{z[\,]}_0 \oplus \kk\Hhom{1[\,]}_0$,
		\item $H_1(A, A^\nu)=\kk\Hhom{1[z]}_1 \oplus \kk\Hhom{x [y]-q^{-1}y [x] }_1$,
		\item $H_2(A, A^\nu)=\kk\Hhom{1 [y, x]-q^{-1} [x, y]}_2$.
	\end{enumerate}
\end{proposition}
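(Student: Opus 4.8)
The plan is to run the argument of Proposition~\ref{prop:hoch-homology1} essentially verbatim. By construction the isomorphism $\HH^i(A)\cong H_{2-i}(A,A^\nu)$ is the composite of the Van den Bergh duality $g$ of Theorem~\ref{thm:quasi-inverse} with the comparison quasi-isomorphism $\eta_*\colon\TT\to C_{\sbullet}(A,A^\nu)$ fixed at the start of Section~\ref{sec:computation-bv}; Proposition~\ref{prop:Per-homo} has already carried out the first half of this composite in the case $p\sim z$, so what remains is to push the three $\TT$-cycles listed there through $\eta_*$ and simplify. The key simplifying feature, which is what distinguishes this case from $p\nsim z$, is that $p\sim z$ forces $p_1$ and $\wt{p_1}$ to lie in $\kk$, so that $\overline{p_1}=\overline{\wt{p_1}}=0$ in $\bA$.

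I would first apply $\eta_0$ to the $0$-cycles $z$ and $1$ in Proposition~\ref{prop:Per-homo}(1): by definition these go to $z[\,]$ and $1[\,]$, so item~(1) is immediate, with none of the basis-change bookkeeping needed in Proposition~\ref{prop:hoch-homology1}. Next I would apply $\eta_1$ to the $1$-cycles $(0,0,1)$ and $(-q^{-1}y,x,0)$ in Proposition~\ref{prop:Per-homo}(2); reading off the formula for $\eta_1$ on each coordinate gives $1[z]$ and $x[y]-q^{-1}y[x]$, which is item~(2). Finally I would apply $\eta_2$ to the $2$-cycle $(1\ot1,-q^{-1}\ot1,0,0)$ in Proposition~\ref{prop:Per-homo}(3): the formula for $\eta_2$ on the first two standard basis vectors involves the terms $1[p_1,z]p_2$ and $1[\wt{p_1},\wt{z}]\wt{p_2}$, and both vanish since $\overline{p_1}=\overline{\wt{p_1}}=0$, leaving $1[y,x]-q^{-1}[x,y]$, which is item~(3).

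To finish, since $\eta_*$ is a quasi-isomorphism and $g$ realizes the duality, the cycles produced above represent bases of $H_{2-i}(A,A^\nu)$ corresponding, under $\HH^i(A)\cong H_{2-i}(A,A^\nu)$, to the bases of $\HH^i(A)$ from Theorem~\ref{thm:Per-cohomo}; all other Hochschild homology groups vanish by Proposition~\ref{prop:Per-homo}, so these are exactly the nontrivial ones. I do not anticipate a real obstacle: the computation is short and, in contrast to the case $p\nsim z$, no $\eta_*$-image has to be rewritten in a different basis. The only points needing a little care are checking that the $p$-dependent terms in $\eta_2$ really drop out --- which is precisely the simplification afforded by $p$ being linear --- and the routine verification that $1[z]$, $x[y]-q^{-1}y[x]$ and $1[y,x]-q^{-1}[x,y]$ are genuine cycles in $C_{\sbullet}(A,A^\nu)$.
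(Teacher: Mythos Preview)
Your proposal is correct and follows exactly the approach the paper intends: the paper omits the proof entirely, saying only that it ``is similar to that of Proposition~\ref{prop:hoch-homology1}'', and your write-up supplies precisely those omitted details. In particular, your key observation---that $p\sim z$ forces $p_1,\wt{p_1}\in\kk$, so the terms $1[p_1,z]p_2$ and $1[\wt{p_1},\wt{z}]\wt{p_2}$ vanish in the normalized bar complex---is the correct reason the $\eta_2$-image simplifies to $1[y,x]-q^{-1}[x,y]$, and you are right that no basis change is needed in degree~$0$ since Proposition~\ref{prop:Per-homo} already records the cycles $z$ and $1$ directly.
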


The proof of the foregoing proposition is similar to that of Proposition \ref{prop:hoch-homology1}, so we omit it here.

The basis elements in Proposition \ref{prop:hoch-homology2} come from $C_{\sbullet}^{(1)}(A, A^\nu)$ too. By Theorem \ref{thm:kk-bv}, we apply the Connes operator $\BB$ to them, and thus have
\begin{align*}
\BB \colon C^{(1)}_0(A, A^\nu) &\longrightarrow C^{(1)}_1(A, A^\nu) \\
z[\,] &\longmapsto 1[z], \\
1[\,] &\longmapsto 1[1]=0, \\
\BB \colon C^{(1)}_1(A, A^\nu) &\longrightarrow C^{(1)}_2(A, A^\nu) \\
1[ z] &\longmapsto 1[1, z]-1[z, 1]=0, \\
x [y]-q^{-1}y [x] &\longmapsto -2[y, x]+2q^{-1} [x, y].
\end{align*}
As before, denote by $\mathfrak{s}$, $\mathfrak{t}$, $\mathfrak{u}$, $\mathfrak{v}$ respectively the classes represented by $(x, -y, 0)^T$, $(x, y, 2z)^T$, $(1, 1, 0, 0)^T$, $(0, \wt{p}, -xz, zy)^T$ which appear in the second part of Theorem \ref{thm:Per-cohomo}. The only two nontrivial actions
\[
\BB(z[\,])=  1[z],\quad \BB(x [y]-q^{-1}y [x])= -2[y, x]+2q^{-1} [x, y]
\] 
correspond to 
\[
\De(\mathfrak{v})=\mathfrak{s}, \quad \De(\mathfrak{t})=-2.
\]
\begin{remark}
	The $1$-cocycle $(x, -y, 0)^T$ determines a derivation $\delta_1\colon A\to A$ given by
	\[
	\delta_1(y^kz^ix^j)=(-k+j)y^kz^ix^j, \quad kj=0,
	\]
	and the $1$-cocycle $(x,y,2z)^T$ determines a derivation $\delta_2\colon A\to A$ given by
	\[
	\delta_2(y^kz^ix^j)=(k+2i+j)y^kz^ix^j, \quad kj=0,
	\]
	by the comparison  constructed in \cite[\S5.1]{L:gwa-def}.
\end{remark}

In order to completely describe the BV algebra structure, we have to express $\mathfrak{s}\smallsmile\mathfrak{t}$ as a linear combination of $\mathfrak{u}$ and $\mathfrak{v}$. Notice that in this case $\eta_2$ is simplified to be 
\begin{align*}
\eta_2(1\ot 1,0,0,0)&= 1[y, x] 1, \\
\eta_2(0,1\ot 1,0,0)&= 1[x, y] 1, \\
\eta_2(0,0,1\ot 1,0)&= q[z, x] 1-1[x, z] 1, \\
\eta_2(0,0,0,1\ot 1)&= 1[z, y] 1-q[y, z] 1.
\end{align*}
We need to compute $(\delta_1\smallsmile\delta_2)(w)$ where $w$ is one of the right-hand sides of the above four equations. Since
\begin{align*}
(\delta_1\smallsmile\delta_2)(y,x)&=-\delta_1(y)\delta_2(x)=yx=p, \\
(\delta_1\smallsmile\delta_2)(x,y)&=-\delta_1(x)\delta_2(y)=-xy=-\wt{p}, \\
(\delta_1\smallsmile\delta_2)(z,x)&=-\delta_1(z)\delta_2(x)=0, \\
(\delta_1\smallsmile\delta_2)(x,z)&=-\delta_1(x)\delta_2(z)=-2xz, \\
(\delta_1\smallsmile\delta_2)(z,y)&=-\delta_1(z)\delta_2(y)=0, \\
(\delta_1\smallsmile\delta_2)(y,z)&=-\delta_1(y)\delta_2(z)=2yz,
\end{align*}
the four required values are $p$, $-\wt{p}$, $2xz$, $-2qyz$. They form a vector $(p, -\wt{p}, 2xz, -2qyz)$ which is equal to $\eta_*(\delta_1\smallsmile\delta_2)$ and represents the class $\mathfrak{s}\smallsmile\mathfrak{t}$. We have
\[
(p, -\wt{p}, 2xz, -2qyz)^T=-2(0, \wt{p}, -xz, zy)^T+(p, \wt{p}, 0, 0)^T,
\] 
and
\[
\begin{bmatrix}
p \\ \wt{p} \\ 0 \\ 0
\end{bmatrix}=
\begin{bmatrix}
zp' \\ \wt{zp'} \\ 0 \\ 0
\end{bmatrix}=
\begin{bmatrix}
y\ot 1 & 1\ot x & -p_1\ot p_2 \\
1\ot y & x\ot 1 & -q\wt{p_1}\ot \wt{p_2} \\
\wt{z}\ot 1-1\ot z & 0 & q\ot x-x\ot 1 \\
0 & z\ot 1-1\ot \wt{z} & 1\ot y-y\ot q
\end{bmatrix}
\begin{bmatrix}
0 \\ 0 \\ -z
\end{bmatrix}
\]
is a $2$-coboundary. Hence $\mathfrak{s}\smallsmile\mathfrak{t}$ is represented by $-2(0, \wt{p}, -xz, zy)^T$, namely, $\mathfrak{s}\smallsmile\mathfrak{t}=-2\mathfrak{v}$.

Summarizing, we obtain

\begin{theorem}\label{thm:main-result-2}
	If $p\sim z$, then $\HH^*(A)$ as a BV algebra has $\{1, \mathfrak{s}, \mathfrak{t}, \mathfrak{u}, \mathfrak{v} \}$ as a basis, where $|1|=0$, $|\mathfrak{s}|=|\mathfrak{t}|=1$, $|\mathfrak{u}|=|\mathfrak{v}|=2$, and in addition,
	\begin{enumerate}
		\item $1$ is the identity of $\HH^*(A)$ with respect to the cup product, and the cup product is zero except $\mathfrak{s}\smallsmile\mathfrak{t}=-\mathfrak{t}\smallsmile\mathfrak{s}=-2\mathfrak{v}$,
		\item $\De(\mathfrak{v})=\mathfrak{s}$, $\De(\mathfrak{t})=-2$, and $\De$ acts on other basis elements trivially.
	\end{enumerate}
\end{theorem}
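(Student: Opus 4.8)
The plan is to bolt together the machinery assembled in Sections~\ref{sec:vdb-duality}--\ref{sec:computation-bv}, in parallel with subsection~\ref{subsec:case-1}. First I would record the underlying graded vector space: by the $p\sim z$ part of Theorem~\ref{thm:Per-cohomo}, $\HH^*(A)=H^*(\Ss)$ has the stated basis $\{1,\mathfrak{s},\mathfrak{t},\mathfrak{u},\mathfrak{v}\}$, with $\mathfrak{s},\mathfrak{t}$ represented by $(x,-y,0)^T,(x,y,2z)^T$ and $\mathfrak{u},\mathfrak{v}$ by $(1,1,0,0)^T,(0,\wt{p},-xz,zy)^T$, and $H^j(\Ss)=0$ for $j\geq 3$. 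The vanishing in degrees $\geq 3$ forces $a\smallsmile b=0$ whenever $|a|+|b|\geq 3$; moreover $\mathfrak{s}\smallsmile\mathfrak{s}=0$ and $\mathfrak{t}\smallsmile\mathfrak{t}=0$ since these classes are odd and $\operatorname{char}\kk\neq 2$. Thus the only cup product left to determine is $\mathfrak{s}\smallsmile\mathfrak{t}$, the value of $\mathfrak{t}\smallsmile\mathfrak{s}$ being then fixed by graded commutativity.

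Second, for $\De$ I would pass to Hochschild homology. Theorem~\ref{thm:quasi-inverse} followed by the comparison $\eta_*$ realizes the Van den Bergh duality $\HH^i(A)\cong H_{2-i}(A,A^\nu)$, and Proposition~\ref{prop:hoch-homology2} identifies the images of the basis classes: $\mathfrak{v}\leftrightarrow\Hhom{z[\,]}_0$, $\mathfrak{u}\leftrightarrow\Hhom{1[\,]}_0$, $\mathfrak{s}\leftrightarrow\Hhom{1[z]}_1$, $\mathfrak{t}\leftrightarrow\Hhom{x[y]-q^{-1}y[x]}_1$, and $1\leftrightarrow\Hhom{1[y,x]-q^{-1}[x,y]}_2$. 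One checks that each listed representative lies in the weight-one subcomplex $C_{\sbullet}^{(1)}(A,A^\nu)$ (the $\nu$-eigenspaces being $A_{(q^j)}=A^j$), so Theorem~\ref{thm:kk-bv} applies and $\De$ is transported from the Connes operator $\BB$. Substituting into the formula for $\BB$, and using the normalization conventions $1[1]=0$ and $1[1,z]=1[z,1]=0$, gives $\BB(z[\,])=1[z]$ and $\BB(x[y]-q^{-1}y[x])=-2[y,x]+2q^{-1}[x,y]=-2(1[y,x]-q^{-1}[x,y])$, while $\BB$ annihilates the remaining representatives. Reading this back through the dictionary yields $\De(\mathfrak{v})=\mathfrak{s}$ and $\De(\mathfrak{t})=-2$, with $\De$ trivial on $1,\mathfrak{s},\mathfrak{u}$; that $\De^2=0$ is part of the conclusion of Theorem~\ref{thm:kk-bv}.

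Third, for $\mathfrak{s}\smallsmile\mathfrak{t}$ I would use the explicit low-degree comparison: under $\eta_*$ the classes $\mathfrak{s},\mathfrak{t}$ correspond to the diagonal derivations $\delta_1,\delta_2\colon A\to A$ with $\delta_1(y^kz^ix^j)=(-k+j)y^kz^ix^j$ and $\delta_2(y^kz^ix^j)=(k+2i+j)y^kz^ix^j$ (cf.\ \cite[\S5.1]{L:gwa-def}). Computing the Gerstenhaber cup product $\delta_1\smallsmile\delta_2$ on pairs of generators — the sign being $(-1)^{|\delta_1||\delta_2|}=-1$ — and feeding the result through the simplified $\eta_2$ of this subsection produces the $2$-cochain with $\eta_*$-coordinate vector $(p,-\wt{p},2xz,-2qyz)^T$. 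Decomposing this as $-2(0,\wt{p},-xz,zy)^T+(p,\wt{p},0,0)^T$ and observing that $(p,\wt{p},0,0)^T=(zp',\wt{zp'},0,0)^T=d^1\bigl((0,0,-z)^T\bigr)$ is a $2$-coboundary (here $p=az$ forces $zp'=p$) gives $\mathfrak{s}\smallsmile\mathfrak{t}=-2\mathfrak{v}$, hence $\mathfrak{t}\smallsmile\mathfrak{s}=2\mathfrak{v}$ by graded commutativity. Collecting the three computations yields the theorem.

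The routine parts are the substitutions into $\BB$ and the matrix multiplications. The step most prone to error — and the one I would present with care — is the cup-product computation: fixing the Gerstenhaber sign, transporting the cochain correctly through the low-degree comparison $\eta_*$, and confirming the coboundary identity $(zp',\wt{zp'},0,0)^T\in\Ima d^1$. A secondary point worth spelling out is the verification that every homology representative in Proposition~\ref{prop:hoch-homology2} genuinely sits in $C_\sbullet^{(1)}(A,A^\nu)$, since this is exactly the hypothesis that legitimizes the use of Theorem~\ref{thm:kk-bv} and hence the whole translation of $\De$ into $\BB$.
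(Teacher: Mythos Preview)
Your proposal is correct and follows essentially the same route as the paper's own argument in subsection~5.2: you invoke Theorem~\ref{thm:Per-cohomo} for the basis, use Proposition~\ref{prop:hoch-homology2} together with Theorem~\ref{thm:kk-bv} to compute $\De$ via the Connes operator on the listed representatives, and then obtain $\mathfrak{s}\smallsmile\mathfrak{t}=-2\mathfrak{v}$ by evaluating $\delta_1\smallsmile\delta_2$ through the simplified $\eta_2$ and exhibiting $(zp',\wt{zp'},0,0)^T=d^1((0,0,-z)^T)$ as a coboundary---exactly as the paper does. Your only addition is the explicit remark that $\mathfrak{s}\smallsmile\mathfrak{s}$ and $\mathfrak{t}\smallsmile\mathfrak{t}$ vanish by graded commutativity in characteristic $\neq 2$, which the paper leaves implicit.
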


\begin{corollary}
	The Gerstenhaber bracket on $\HH^*(A)$ is trivial if $p\sim z$.
\end{corollary}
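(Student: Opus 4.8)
The plan is to derive the statement formally from Theorem~\ref{thm:main-result-2} via formula~\eqref{eq:gerstenhaber-bracket-generator}, just as in the case $p\nsim z$; the one new feature here is that neither the cup product nor the BV operator is now essentially zero, so a genuine cancellation has to occur. Since the bracket is $\kk$-bilinear and graded antisymmetric on $\HH^*(A)[1]$, it suffices to compute $[a,b]$ for $a,b$ running over the basis $\{1,\mathfrak{s},\mathfrak{t},\mathfrak{u},\mathfrak{v}\}$, each unordered pair once. From Theorem~\ref{thm:main-result-2} the inputs are: $\De$ kills $1$, $\mathfrak{s}$, $\mathfrak{u}$, while $\De(\mathfrak{v})=\mathfrak{s}$ and $\De(\mathfrak{t})=-2$ (a scalar multiple of the unit); and the only nonvanishing products among $\mathfrak{s},\mathfrak{t},\mathfrak{u},\mathfrak{v}$ are $\mathfrak{s}\smallsmile\mathfrak{t}=-\mathfrak{t}\smallsmile\mathfrak{s}=-2\mathfrak{v}$.

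First I would dispose of the pairs containing $1$: as $1$ has degree $0$, $\De(1)\in\HH^{-1}(A)=0$, and $1$ is the cup unit, so the three terms of~\eqref{eq:gerstenhaber-bracket-generator} collapse to $\De(a)-\De(a)-0$, giving $[a,1]=0$. Next, for $a,b\in\{\mathfrak{s},\mathfrak{u},\mathfrak{v}\}$ one has $a\smallsmile b=0$, hence $\De(a\smallsmile b)=0$, while each of $\De(a)\smallsmile b$ and $a\smallsmile\De(b)$ is either zero or a product landing in degree $\geq 3$, where the cohomology vanishes; thus $[a,b]=0$. This leaves exactly the pairs one of whose entries is $\mathfrak{t}$, namely $[\mathfrak{s},\mathfrak{t}]$, $[\mathfrak{t},\mathfrak{t}]$, $[\mathfrak{t},\mathfrak{u}]$, $[\mathfrak{t},\mathfrak{v}]$, since only in these can the nonzero data $\De(\mathfrak{t})$ or $\mathfrak{s}\smallsmile\mathfrak{t}$ enter. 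For each of them I would substitute into~\eqref{eq:gerstenhaber-bracket-generator}, using $\De(\mathfrak{s}\smallsmile\mathfrak{t})=-2\De(\mathfrak{v})=-2\mathfrak{s}$ together with the products of the various generators with $\De(\mathfrak{t})$, and check that the surviving terms cancel one another; for $[\mathfrak{t},\mathfrak{t}]$ this cancellation (of $\De(\mathfrak{t})\smallsmile\mathfrak{t}$ against $\mathfrak{t}\smallsmile\De(\mathfrak{t})$) is immediate from graded commutativity, the class $\De(\mathfrak{t})$ being central of even degree.

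The heart of the argument---and essentially the only obstacle---is the sign bookkeeping in~\eqref{eq:gerstenhaber-bracket-generator}: one must keep careful track of the factor $(-1)^{|a|}$ in front of $a\smallsmile\De(b)$, of the outer factor $(-1)^{|a|}$, and of the Koszul signs produced when the degree-$0$ class $\De(\mathfrak{t})$ is commuted past the degree-$1$ and degree-$2$ generators, so that the numerical data $\mathfrak{s}\smallsmile\mathfrak{t}=-2\mathfrak{v}$, $\De(\mathfrak{v})=\mathfrak{s}$, $\De(\mathfrak{t})=-2$ conspire to annihilate each of the four remaining brackets. As an independent check one could instead compute the Gerstenhaber brackets of the generating derivations $\delta_1$, $\delta_2$ and of cocycle representatives of $\mathfrak{u}$, $\mathfrak{v}$ directly on the bar complex through the comparison maps $\eta_*$ of Section~\ref{sec:computation-bv}, but the purely algebraic route via~\eqref{eq:gerstenhaber-bracket-generator} is considerably shorter.
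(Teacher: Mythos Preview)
Your approach is exactly the paper's: its entire proof is the single sentence ``This follows from Theorem~\ref{thm:main-result-2} and the formula~\eqref{eq:gerstenhaber-bracket-generator},'' and you have merely unpacked the case analysis that sentence stands for. Your reduction to the four pairs involving $\mathfrak{t}$, together with the observation that the remaining brackets vanish either by degree or because every ingredient in~\eqref{eq:gerstenhaber-bracket-generator} is already zero, is precisely the implicit reasoning behind the paper's one-line proof.
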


\begin{proof}
	This follows from Theorem \ref{thm:main-result-2} and the formula \eqref{eq:gerstenhaber-bracket-generator}.
\end{proof}

At the end of this section, we point out that the zeroth and the second Hochschild cohomological groups are  the only nontrivial groups for all generalized Weyl algebras of classical type (cf.\ \cite[Thm 1.2]{Farinati:Hochschid-homology-GWA}), and consequently the Hochschild cohomology admits trivial BV algebra structure. This is why we do not consider classical type in this paper.

\section{Applications}\label{sec:applications}

In this section, let us consider two concrete algebras arising from mathematical physics. The base field $\kk$ is fixed to be the complex number field $\mathbb{C}$, and $q$ is transcendental over rational numbers $\mathbb{Q}$.

Quantum weighted projective lines are related to the quantum group $SU_q(2)$. As an algebra, $SU_q(2)$ is generated by four elements $a$, $b$, $c$, $d$, subject to the relations
\begin{gather*}
ab=qba, \; ac=qca, \; bc=cb, \; bd=qdb, \; cd=qdc, \\
ad-da=(q-q^{-1})bc, \; ad-qbc=1.
\end{gather*}
Choose coprime positive integers $k$ and $l$, and endow the four generators with gradings as follows:
\[
|a|=k, \; |b|=l, \; |c|=-l, \; |d|=-k.
\]
Thus $SU_q(2)$ is made into a $\inn$-graded algebra. The homogeneous component of degree zero is called a quantum weighted projective line with respect to $(k,l)$, and is denoted by $\WL_q(k,l)$. For the background in mathematical physics of quantum weighted projective lines, we refer to \cite{Brzezinski-Fairfax:quantum-teardrop}.

\begin{theorem}[{\cite[Thm 2.1]{Brzezinski-Fairfax:quantum-teardrop}}]
	As a subalgebra of $SU_q(2)$, $\WL_q(k,l)$is generated by $a^lc^k$, $(-q)^kb^kd^l$, $-qbc$. If  we let
	\[
	x=a^lc^k, \; y=(-q)^kb^kd^l, \; z=-qbc,
	\]
	then the generating relations are
	\[
	xz=q^{2l}zx, \; yz=q^{-2l}zy, \; yx=z^k\prod_{i=1}^l(1-q^{-2i}z), \; xy=q^{2kl}z^k\prod_{i=0}^{l-1}(1-q^{2i}z).
	\]
\end{theorem}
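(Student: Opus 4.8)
The statement is the cited theorem of Brzezi\'nski and Fairfax; here is how I would prove it, splitting it into the generation claim and the verification of the four relations.

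\emph{Generation.} I would start from the standard vector space basis of $SU_q(2)$ consisting of the monomials $a^{\alpha}b^{\beta}c^{\gamma}$ ($\alpha,\beta,\gamma\ge 0$) together with $d^{\delta}b^{\beta}c^{\gamma}$ ($\delta\ge 1$, $\beta,\gamma\ge 0$), and read off degrees: $|a^{\alpha}b^{\beta}c^{\gamma}|=k\alpha+l(\beta-\gamma)$ and $|d^{\delta}b^{\beta}c^{\gamma}|=-k\delta+l(\beta-\gamma)$. Since $\gcd(k,l)=1$, vanishing of the first degree forces $\alpha=lm$ and $\gamma=\beta+km$ for some $m\ge 0$, and of the second forces $\delta=lm$, $\beta=\gamma+km$ with $m\ge 1$; hence $\WL_q(k,l)$ has $\kk$-basis $\{a^{lm}b^{\beta}c^{\beta+km}\}\cup\{d^{lm}b^{\gamma+km}c^{\gamma}\mid m\ge 1\}$. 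Then, using only the commutation relations $ab=qba$, $ac=qca$, $bc=cb$, $bd=qdb$, $cd=qdc$, I would rewrite $z^{\beta}x^m$ as a nonzero scalar times $a^{lm}b^{\beta}c^{\beta+km}$ and $z^{\gamma}y^m$ as a nonzero scalar times $d^{lm}b^{\gamma+km}c^{\gamma}$, the case $m=0$ being $z^{\beta}=(-q)^{\beta}b^{\beta}c^{\beta}$. This exhibits every basis monomial inside the subalgebra generated by $x,y,z$, so that subalgebra is all of $\WL_q(k,l)$.

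\emph{The relations.} The identities $xz=q^{2l}zx$ and $yz=q^{-2l}zy$ are immediate: move $a^l$ (respectively $d^l$) past $z=-qbc$ using $ac=qca,ab=qba$ (respectively $cd=qdc,bd=qdb$), collecting exactly $q^{\pm 2l}$. For the two product relations, which form the technical core, I would first record from $ad-qbc=1$ and $ad-da=(q-q^{-1})bc$ that
\[
ad=1+qbc=1-z,\qquad da=1+q^{-1}bc=1-q^{-2}z,
\]
and note the conjugation rules $a\,f(z)=f(q^2z)\,a$ and $d\,f(z)=f(q^{-2}z)\,d$ for every polynomial $f$, coming from $a(bc)=q^2(bc)a$ and $d(bc)=q^{-2}(bc)d$. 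A short induction on $l$ then yields the closed forms
\[
a^l d^l=\prod_{i=0}^{l-1}(1-q^{2i}z),\qquad d^l a^l=\prod_{i=1}^{l}(1-q^{-2i}z),
\]
the step being $a^{l+1}d^{l+1}=a(a^l d^l)d=\bigl(\textstyle\prod_{i=1}^{l}(1-q^{2i}z)\bigr)ad$, and symmetrically for $d^l a^l$. Since $z$ commutes with $b$ and $c$ and $(bc)^k=(-q^{-1}z)^k$, it follows that
\[
yx=(-q)^k b^k(d^l a^l)c^k=z^k\prod_{i=1}^{l}(1-q^{-2i}z),
\]
and, using $a^l z^k=q^{2kl}z^k a^l$,
\[
xy=(-q)^k a^l(bc)^k d^l=a^l z^k d^l=q^{2kl}z^k a^l d^l=q^{2kl}z^k\prod_{i=0}^{l-1}(1-q^{2i}z),
\]
which are exactly the asserted relations.

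\emph{Main obstacle.} Nearly everything is bookkeeping with the six defining relations of $SU_q(2)$; the one place demanding care is the pair of closed formulas for $a^l d^l$ and $d^l a^l$. The inductions are routine, but they telescope to such clean products only because the generators have been normalized precisely as $x=a^lc^k$, $y=(-q)^k b^k d^l$, $z=-qbc$ — under any other scaling the accumulated powers of $q$ would not collapse — so pinning down the correct normalization is the real content of the argument.
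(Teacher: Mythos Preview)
The paper does not prove this theorem; it is quoted from \cite{Brzezinski-Fairfax:quantum-teardrop} and used as input for the applications in Section~\ref{sec:applications}. So there is no proof in the paper to compare against.

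Your argument is correct. The degree count on the standard basis of $SU_q(2)$ together with $\gcd(k,l)=1$ identifies the degree-zero component exactly as you describe, and your inductive formulas $a^l d^l=\prod_{i=0}^{l-1}(1-q^{2i}z)$ and $d^l a^l=\prod_{i=1}^{l}(1-q^{-2i}z)$, built from $ad=1-z$, $da=1-q^{-2}z$ and the conjugation rules $a\,f(z)=f(q^2z)\,a$, $d\,f(z)=f(q^{-2}z)\,d$, yield the product relations cleanly. One point you leave implicit: you check that the four relations \emph{hold}, but ``generating relations'' means they \emph{present} $\WL_q(k,l)$. This is in fact contained in your generation paragraph --- you show that the GWA monomials $z^{\beta}x^{m}$ and $z^{\gamma}y^{m}$ map to nonzero scalar multiples of \emph{distinct} basis monomials of $SU_q(2)$, so the canonical surjection from the abstract generalized Weyl algebra (with basis $\mathcal{B}$ as in Definition~\ref{def:gwa}) onto $\WL_q(k,l)$ is also injective --- but it is worth stating this conclusion explicitly rather than leaving it to the reader.
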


By Definition \ref{def:gwa}, $\WL_q(k,l)$ is a generalized Weyl algebra, whose defining polynomial
\[
p=p(z)=z^k\prod_{i=1}^l(1-q^{-2i}z)
\]
has no multiple roots if and only if $k=1$. So let us consider the case $k=1$. The degree of $p$ is $l+1$, and $\WL_q(1,l)$ is skew Calabi--Yau with Nakayama automorphism $\nu$ given by
\[
\nu(x)=q^{2l}x, \; \nu(y)=q^{-2l}y, \; \nu(z)=z.
\]
Since $q$ is transcendental over $\mathbb{Q}$, the coefficients of $p$ are nonzero except the constant term. We thus have $\ell=1$.

After applying Theorem \ref{thm:main-result-1}  to $\WL_q(1,l)$, we get

\begin{proposition}
	The Hochschild cohomology of $\WL_q(1,l)$ is an $(l+3)$-dimensional graded space with basis $\{1, \mathfrak{s}, \mathfrak{v}, \mathfrak{u}^i \,|\, 0\leq i\leq l-1 \}$, where $|1|=0$, $|\mathfrak{s}|=1$, $|\mathfrak{v}|=|\mathfrak{u}^i|=2$. In addition,
	\begin{enumerate}
		\item $1$ is the identity of $\HH^*(\WL_q(1,l))$ with respect to the cup product, and the cup products of other pairs of basis elements are trivial,
		\item $\De(\mathfrak{v})=\mathfrak{s}$, and $\De$ acts on other basis elements trivially.
	\end{enumerate}
\end{proposition}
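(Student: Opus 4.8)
The plan is to specialize Theorem~\ref{thm:main-result-1} to the algebra $A = \WL_q(1,l)$, so the task reduces to verifying that $\WL_q(1,l)$ genuinely falls under the hypotheses of that theorem and to reading off the numerical invariants $n$ and $\ell$. First I would record that, for $k=1$, the defining polynomial is $p(z) = z\prod_{i=1}^l(1-q^{-2i}z)$, which has degree $n = l+1$ and, because $q$ is transcendental over $\mathbb{Q}$, has $l+1$ distinct roots (the roots $q^{2i}$ for $1\le i\le l$ together with $0$ are pairwise distinct). Hence $p$ has no multiple roots and $\WL_q(1,l)$ is a skew Calabi--Yau generalized Weyl algebra of quantum type by \cite[Thm~4.5]{L:gwa-def}; moreover $q^{2l}$ is not a root of unity, so $q$ is generic in the sense of Section~\ref{sec:bv-structure} and the whole apparatus of Section~\ref{sec:computation-bv} applies.

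Next I would check the two case-distinctions that enter Theorem~\ref{thm:main-result-1}. Since $\deg p = l+1 \ge 2$ whenever $l\ge 1$, we have $p\nsim z$, so we are in the first case. Then I would compute $\ell = \min\{\, j \mid j a_j \neq 0\,\}$ for $p = \sum_{i=0}^{n} a_i z^{n-i}$: writing $p$ in ascending powers of $z$, the constant term vanishes and the coefficient of $z^1$ is $1\ne 0$ (it is the product of the constant terms $1$ of the factors $1-q^{-2i}z$, times the leading $z$), so $\ell = 1$. Therefore the index set $\{\,i \mid 0\le i < n,\ i\ne n-\ell\,\}$ becomes $\{0,1,\dots,l\}\setminus\{l\} = \{0,1,\dots,l-1\}$, which has exactly $l$ elements.

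Finally I would assemble the count: the basis of $\HH^*(\WL_q(1,l))$ furnished by Theorem~\ref{thm:main-result-1} is $\{1,\mathfrak{s},\mathfrak{v}\}\cup\{\mathfrak{u}^i \mid 0\le i\le l-1\}$, of total dimension $3 + l = l+3$, with the stated degrees $|1|=0$, $|\mathfrak{s}|=1$, $|\mathfrak{v}|=|\mathfrak{u}^i|=2$. The cup-product statement (item~(1)) and the statement $\De(\mathfrak{v})=\mathfrak{s}$ with $\De$ trivial elsewhere (item~(2)) are transported verbatim from Theorem~\ref{thm:main-result-1}. There is essentially no genuine obstacle here: the only point requiring care is confirming that $p$ has simple roots and that $\ell=1$, both of which rest on the transcendence of $q$ over $\mathbb{Q}$ (which guarantees the powers $q^{-2i}$ are pairwise distinct and nonzero, hence the roots of $p$ are distinct and the low-order coefficients of $p$ do not accidentally vanish); everything else is a direct substitution into the already-proved theorem.
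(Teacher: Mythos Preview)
Your approach is exactly the paper's: check that $\WL_q(1,l)$ meets the hypotheses of Theorem~\ref{thm:main-result-1} with $n=l+1$ and $\ell=1$, then read off the basis and structure. One small slip: in the convention $p=\sum_{i=0}^n a_i z^{n-i}$ the coefficient $a_j$ is that of $z^{n-j}$, not of $z^j$, so to get $\ell=1$ you need $a_1$ (the coefficient of $z^{\,l}$) nonzero rather than the coefficient of $z^1$; your conclusion is nonetheless correct because, as the paper observes, transcendence of $q$ forces \emph{every} coefficient of $p$ except the constant term to be nonzero.
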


Another application is the structure for Podle\'s quantum spheres. Choose complex number $u$, $v$ such that $u^2+4v\neq0$. The subalgebra of $SU_q(2)$ that is generated by
\begin{align*}
x&=-q^{-1} (u ac-qc^2+va^2), \\
y&=u bd-qd^2+vb^2, \\
z&= q^{-1}(u bc-cd+vq^{-1}ab),
\end{align*}
is called the Podle\'s quantum sphere with parameters $u$, $v$, denoted by $\SSS^2_q(u,v)$. It turns out that the generating relations are
\begin{align*}
xz&=q^2zx, \quad yz=q^{-2}zy, \\
yx&=-z^2-uz+v, \\ 
xy&=-q^4z^2-uq^2z+v.
\end{align*}

\begin{remark}
	As the name suggests, Podle\'s quantum spheres were originally constructed by  Podle\'s \cite{Podles:quantum-sphere}. They are not only subalgebras of $SU_q(2)$, but also right coideal of it. In the literature, Podle\'s quantum spheres are said to be quantum homogeneous spaces of $SU_q(2)$.
\end{remark}

\begin{remark}
	Obviously, $\SSS_q(u, 0)\cong \WL_q(1,1)$. The latter is the same as $\mathbb{CP}_q^1$, the so-called complex quantum projective line.
\end{remark}

Podle\'s quantum spheres are all generalized Weyl algebras. Since the discriminant of the defining polynomial $p=-z^2-uz+v$ is $u^2+4v\neq 0$, $\SSS^2_q(u,v)$ is skew Calabi--Yau whose Nakayama automorphism $\nu$ satisfies
\[
\nu(x)=q^{2}x, \; \nu(y)=q^{-2}y, \; \nu(z)=z.
\]
It is easy to see that $\ell=1$ if $u\neq 0$, and $\ell=2$ if $u=0$.

Applying  Theorem \ref{thm:main-result-1} to $\SSS^2_q(u,v)$, we then obtain

\begin{proposition}
	$\HH^*(\SSS^2_q(u,v))$ is a $4$ dimensional graded space with basis $\{1, \mathfrak{s}, \mathfrak{v}, \mathfrak{u}^\star\}$, where
	\[
	\mathfrak{u}^\star=\begin{cases}
	\mathfrak{u}^0, & u\neq 0, \\ \mathfrak{u}^1, & u=0,
	\end{cases}
	\]
	and $|1|=0$, $|\mathfrak{s}|=1$, $|\mathfrak{v}|=|\mathfrak{u}^\star|=2$. Furthermore,
	\begin{enumerate}
		\item $1$ is the identity of $\HH^*(\SSS^2_q(u,v))$ with respect to the cup product, and the cup products of other pairs of basis elements are trivial,
		\item $\De(\mathfrak{v})=\mathfrak{s}$, and $\De$ acts on other basis elements trivially.
	\end{enumerate}
\end{proposition}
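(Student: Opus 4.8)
The plan is to deduce the proposition directly from Theorem \ref{thm:main-result-1}, after checking that $\SSS^2_q(u,v)$ meets the standing hypotheses of Sections \ref{sec:bv-structure}--\ref{sec:computation-bv}: that it is a skew Calabi--Yau generalized Weyl algebra of quantum type with generic parameter and with $p\nsim z$. First I would match the given presentation of $\SSS^2_q(u,v)$ against Definition \ref{def:gwa}. The relations $xz=q^2zx$ and $yz=q^{-2}zy$ identify $\si$ as the automorphism of $\kk[z]$ sending $z$ to $q^2z$, so we are in the quantum case with multiplicative parameter $q^2$ and $c=0$; the remaining two relations are precisely $yx=p$ and $xy=\si(p)$ for $p=p(z)=-z^2-uz+v$, since $\si(p)=-q^4z^2-uq^2z+v$. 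As $q$ is transcendental over $\mathbb{Q}$, the element $q^2$ is not a root of unity, so $\SSS^2_q(u,v)$ is of quantum type with generic parameter.

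Next I would observe that the discriminant of $p$ is $u^2+4v$, which is nonzero by assumption, so $p$ has no multiple roots; by \cite[Thm 4.5]{L:gwa-def} this makes $\SSS^2_q(u,v)$ skew Calabi--Yau of dimension $2$ with the stated Nakayama automorphism, and $\nu$ is diagonalizable, hence semisimple. Since $\deg p=2>1=\deg z$, we have $p\nsim z$, so Theorem \ref{thm:main-result-1} applies with $n=\deg p=2$. Writing $p=a_0z^2+a_1z+a_2$ with $a_0=-1$, $a_1=-u$, $a_2=v$, the invariant $\ell=\min\{\,j\mid ja_j\neq0\,\}$ equals $1$ when $u\neq0$; when $u=0$ the hypothesis $u^2+4v\neq0$ forces $v\neq0$, and then $0\cdot a_0=1\cdot a_1=0$ while $2\cdot a_2\neq0$, so $\ell=2$.

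Finally I would substitute these values into Theorem \ref{thm:main-result-1}: the BV-algebra basis is $\{1,\mathfrak{s},\mathfrak{v}\}\cup\{\,\mathfrak{u}^i\mid 0\leq i<2,\ i\neq 2-\ell\,\}$. For $\ell=1$ the omitted index is $1$, leaving the single class $\mathfrak{u}^0$; for $\ell=2$ the omitted index is $0$, leaving the single class $\mathfrak{u}^1$. In either case exactly one $\mathfrak{u}^i$ survives, and it is the class denoted $\mathfrak{u}^\star$ in the statement, so $\HH^*(\SSS^2_q(u,v))$ is four-dimensional with the asserted graded basis; parts (1) and (2) are then the corresponding assertions of Theorem \ref{thm:main-result-1} transported verbatim. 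I do not anticipate any real obstacle here; the only step needing a little care is the evaluation of $\ell$ in the degenerate case $u=0$, where one must invoke $v\neq0$, together with keeping track of which single index $i$ is excluded from the basis.
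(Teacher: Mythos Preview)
Your proposal is correct and follows essentially the same approach as the paper: verify that $\SSS^2_q(u,v)$ is a skew Calabi--Yau generalized Weyl algebra of quantum type with generic parameter and $p\nsim z$, compute $n=2$ and $\ell\in\{1,2\}$ according to whether $u\neq 0$ or $u=0$, and then read off the basis and BV structure from Theorem \ref{thm:main-result-1}. In fact your write-up is more detailed than the paper's, which simply records the value of $\ell$ in each case and states that the proposition follows by applying Theorem \ref{thm:main-result-1}.
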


\begin{remark}
	According to this proposition, the basis for $\HH^*(\SSS^2_q(u,v))$ depends on if $u$ is zero, although the dimension is always four. 
\end{remark}

\begin{remark}
	When $u=0$, $v$ is impossible to be zero. In this case, one can directly check $\SSS^2_q(0,v)$ is isomorphic to $\SSS^2_q(0,1)$, which is called the equatorial Podle\'s quantum sphere.
\end{remark}

\section*{Acknowledgments}
Both authors are very grateful to the anonymous referees for their valuable comments, and to Guodong Zhou for pointing out an inaccuracy in the early version of this article.  This research is supported by the Natural Science Foundation of China No.\ 11971418.

\end{document}